\documentclass[11pt]{amsart}

\usepackage{latexsym}
\usepackage{amssymb,amsfonts,amsmath,amsthm}
\usepackage{graphicx}
\usepackage{enumerate}
\usepackage{mdwlist}
\usepackage[margin=1.3in]{geometry}

\addtolength{\hoffset}{-0.5cm}
%\addtolength{\evensidemargin}{-0.4cm}
\addtolength{\textwidth}{1cm}

\newtheorem{theorem}{Theorem}  %[chapter]

\newtheorem{lemma}[theorem]{Lemma}
\newtheorem{observation}[theorem]{Observation}
\newtheorem{claim}[theorem]{Claim}
\newtheorem{corollary}[theorem]{Corollary}
\newtheorem{construction}[theorem]{Construction}
\newtheorem{remark}[theorem]{Remark}
\newtheorem{example}[theorem]{Example}

\newtheorem{proposition}[theorem]{Proposition}
\newtheorem{conjecture}[theorem]{Conjecture}
\newtheorem{definition}[theorem]{Definition}

\newtheorem{question}[theorem]{Question}

 {\begin{list}{}%
         {\setlength{\leftmargin}{#1}}%
         \item[]%
 }
 {\end{list}}

\allowdisplaybreaks 
\def\COMMENT#1{}
\def\TASK#1{}

\numberwithin{theorem}{section}
\numberwithin{equation}{section}

\newdimen\margin   % needed for macros \textdisplay & \ltextdisplay
\def\textno#1&#2\par{%
   \margin=\hsize
   \advance\margin by -4\parindent
          \setbox1=\hbox{\sl#1}%
   \ifdim\wd1 < \margin
      $$\box1\eqno#2$$%
   \else
      \bigbreak
      \hbox to \hsize{\indent$\vcenter{\advance\hsize by -3\parindent
      \it\noindent#1}\hfil#2$}%
      \bigbreak
   \fi}

\title[Regular subgraphs of uniform hypergraphs]{Regular subgraphs of uniform hypergraphs}

\author{Jaehoon Kim}
\thanks{ The research leading to these results was partially supported by the European Research Council under the European Union's Seventh Framework Programme (FP/2007--2013) / ERC Grant Agreements no. 306349.}

\begin{document}

\begin{abstract}
We prove that for every integer $r\geq 2$, an $n$-vertex $k$-uniform hypergraph $H$ containing no $r$-regular subgraphs has at most $(1+o(1)){{n-1}\choose{k-1}}$ edges if $k\geq r+1$ and $n$ is sufficiently large. Moreover, if $r\in\{3,4\}$, $r\mid k$ and $k,n$ are both sufficiently large, then the maximum number of edges in an $n$-vertex $k$-uniform hypergraph containing no $r$-regular subgraphs is exactly ${{n-1} \choose {k-1}}$, with equality only if all edges contain a specific vertex $v$. We also ask some related questions.\end{abstract}

\date{\today}

\maketitle 

\section{Introduction}\label{Introduction}

What are the graphs containing no $r$-regular subgraphs? For $r=2$, the answer is easy, they are forests. However, the question becomes much harder when $r$ is larger than two. Complete characterizations of graphs with no $r$-regular subgraphs seem impossible even for the case $r=3$. So it is natural to ask how many edges can a graph with no $r$-regular subgraphs have. Pyber~\cite{Pyber1985} showed that there exists a constant $c_r$ such that all $n$-vertex graphs with at least $c_r n\log{n}$ edges have an $r$-regular subgraph. On the other hand, Pyber,  R\"{o}dl and  Szemer\'{e}di~\cite{PRS1995} proved that there exists a graph with $\Omega(n \log \log{n})$ edges having no $r$-regular subgraphs for any $r\geq 3$. The gap between the two bounds still remains open. 

It is also natural to consider the same question for hypergraphs, both uniform and non-uniform hypergraphs. Mubayi and  Verstra\"{e}te~\cite{MV2009} proved that for every even integer $k\geq 4$, there exists $n_k$ such that for $n\geq n_k$, each $n$-vertex $k$-uniform hypergraph $H$ with no $2$-regular subgraphs has at most ${n-1\choose k-1}$ edges, and equality holds if and only if $H$ is a {\em full $k$-star}, that is, a $k$-uniform hypergraph consisting of all possible edges of size $k$ containing a given vertex. For non-uniform hypergraphs, it is easy to see that an $n$-vertex hypergraph $H$ with no $r$-regular subgraphs has at most $2^{n-1}+r-2$ edges. One example for the equality is a {\em full star} (that is a hypergraph consisting of all possible edges containing a given vertex) with additional $r-2$ smallest edges not containing the given vertex. The author and Kostochka~\cite{KK} proved that if $n\geq 425$ and $n>r$, a hypergraph $H$ with no $r$-regular subgraphs contains $2^{n-1}+r-2$ edges only if $H$ is a {\em full star} with $r-2$ additional edges. 
One can ask a similar question for linear hypergraphs. Dellamonica et al. \cite{DHLNPRSV} showed that the maximum number of edges in a linear $3$-uniform hypergraph with no two-regular subgraphs is $\Omega(n\log{n})$ and $O(n^{3/2}(\log{n})^5)$ and they asked whether every linear $3$-uniform hypergraph with no $3$-regular subgraphs has at most $o(n^2)$ edges. In Section \ref{example}, we confirm that this is true.

In this paper, we consider $k$-uniform hypergraphs with no $r$-regular subgraphs. The following two theorems are main results of this paper.

\begin{theorem}\label{asymptotic theorem}
Let $k,r$ be two integers with $r\geq 2, k\geq r+1$. Then there exists $n_k$ such that for $n>n_k$ any $n$-vertex $k$-uniform hypergraph $H$ with no $r$-regular subgraphs has at most $(1+o(1)){{n-1}\choose {k-1}}$ edges. Moreover, if $k\geq 2r+1$ and $|H|\geq (1-n^{-\frac{2}{3r^2}}){{n-1}\choose {k-1}}$, then there exists a vertex $v$ which belongs to at least $(1- n^{-\frac{1}{6r^2}}){{n-1}\choose {k-1}}$ edges.
\end{theorem}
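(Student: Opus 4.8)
The plan is to reduce everything to one fixed ``forbidden'' $r$-regular $k$-uniform hypergraph and then prove a stability version of the corresponding Tur\'an problem. When $r\mid k$, write $t:=k/r$ and let $F=F_{r,k}$ be the hypergraph on a vertex set $W$ with $|W|=(r+1)t$ whose edges are the $r+1$ sets $W\setminus P_i$, where $W=P_1\cup\dots\cup P_{r+1}$ is an equipartition into parts of size $t$. Each vertex misses exactly one $e_i=W\setminus P_i$, so $F$ is $r$-regular, whence any $H$ with no $r$-regular subgraph contains no copy of $F$; and $\bigcap_i e_i=\emptyset$, so the full $k$-star also contains no copy of $F$. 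For general $k\ge r+1$ (so $r\nmid k$ is allowed) one uses instead the $r$-regular hypergraph obtained from $r$ distinct bipartitions of a fixed $2k$-set into two $k$-sets (with the $2r$ parts distinct), which again is $r$-regular and star-free. Thus it suffices to prove $(\star)$: every $F$-free $n$-vertex $k$-uniform hypergraph has at most $(1+o(1))\binom{n-1}{k-1}$ edges, and moreover (for $k\ge 2r+1$) if it has at least $(1-n^{-2/(3r^2)})\binom{n-1}{k-1}$ edges then some vertex lies in at least $(1-n^{-1/(6r^2)})\binom{n-1}{k-1}$ of them.

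\smallskip\noindent\textbf{A structural reduction for containing $F$.} Take $r\mid k$, $t=k/r$. I claim $H$ contains a copy of $F$ if and only if there are a $t$-set $Z$ and an edge $Y\in H$ with $Y\cap Z=\emptyset$ such that the trace hypergraph
\[
\mathcal P_{Z,Y}:=\Bigl\{\,P\in\binom{Y}{t}\ :\ (Y\setminus P)\cup Z\in H\,\Bigr\}
\]
(a $t$-uniform hypergraph on the $rt$ vertices of $Y$) has a perfect matching, i.e.\ $r$ pairwise disjoint edges: given such $Z,Y$ and a matching $P_1,\dots,P_r$, put $W=Y\cup Z$, $P_{r+1}=Z$, and take the edges $Y=W\setminus Z$ and $(Y\setminus P_i)\cup Z=W\setminus P_i$; conversely every copy of $F$ restricts to such a pair. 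A short pigeonhole over the perfect matchings of an $rt$-set into $t$-sets (each fixed $t$-set lies in a $1/\binom{k-1}{t-1}$ fraction of them) shows that a $t$-uniform hypergraph on $rt$ vertices missing fewer than $\binom{k-1}{t-1}$ edges must contain a perfect matching. Hence it is enough to exhibit one eligible pair $(Z,Y)$ with
\[
|\mathcal P_{Z,Y}|\ >\ \binom{k}{t}-\binom{k-1}{t-1}\ =\ \binom{k-1}{t}.
\]
Two facts frame the counting: double counting triples $(Z,Y,P)$ with $(Y\setminus P)\cup Z\in H$ gives $\sum_{(Z,Y)}|\mathcal P_{Z,Y}|=\#\{(e,e')\in H^2:\ |e\cap e'|=k-t\}$, while the number of eligible pairs is exactly $|H|\binom{n-k}{t}$; and the full $k$-star realizes $|\mathcal P_{Z,Y}|=\binom{k-1}{t}$ for \emph{every} eligible pair, sitting exactly on the boundary above — which is precisely why $\binom{n-1}{k-1}$ cannot merely be beaten and why $(\star)$ is a stability statement rather than a supersaturation one.

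\smallskip\noindent\textbf{The core: dense and $F$-free implies near-star.} It remains to show that an $F$-free $H$ that is far from every $k$-star must contain an eligible pair $(Z,Y)$ with $|\mathcal P_{Z,Y}|>\binom{k-1}{t}$ — equivalently, a $(k+t)$-set $W$ with a $t$-subset $Z$ such that $H$ contains more than $\binom{k-1}{t}$ of the $\binom{k}{t}$ potential edges of $W$ through $Z$ together with $W\setminus Z\in H$. The plan is an iterated localization: a dense $H$ cannot have all its $(k-t+i)$-codegrees small, so by successively conditioning on low-uniformity links one produces a small vertex set on which the relevant link is almost complete; run this for a number of rounds bounded in terms of $r$ (since $F$ has $r+1$ edges and at most $k+k/r$ vertices), each round costing essentially one Cauchy--Schwarz/convexity square-root on the degree or codegree sequence, and track the deficiency. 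Unless $H$ was already concentrated on a single vertex to within the stated polynomial tolerance, this yields the desired pair; chasing the constants through the $O(r)$ rounds is what converts an initial deficiency of order $n^{-1}$ into the exponents $n^{-2/(3r^2)}$ and $n^{-1/(6r^2)}$. For the bare $(1+o(1))$ bound with only $k\ge r+1$ one may be far cruder — a coarse version of the same localization suffices, and the bipartition gadget covers $r\nmid k$. The hypothesis $k\ge 2r+1$ enters here, guaranteeing $t=k/r\ge 2$ so that the traces $\mathcal P_{Z,Y}$ are genuinely nontrivial and keeping $\binom{k}{t},\binom{k-1}{t-1}$ in the range where the matching pigeonhole is tight.

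\smallskip\noindent\textbf{The main obstacle.} The difficulty is that this is a \emph{tight} stability problem: every $r$-regular hypergraph has no common vertex, so its Tur\'an number is already $\Omega\!\bigl(\binom{n-1}{k-1}\bigr)$, exactly the order we are trying to reach, and the extremal configuration — the $k$-star — is degenerate for every natural first-moment statistic (no two disjoint edges; exactly on the threshold $|\mathcal P_{Z,Y}|=\binom{k-1}{t}$; etc.), so no single averaging can detect an $r$-regular subgraph. One is therefore forced into a genuine multi-round argument in which each inequality must be essentially sharp, and the real work — and the source of the stated error exponents — is quantitative control of how the deficiency compounds across the $O(r)$ localization steps while still leaving enough room to extract a perfect matching in the final trace hypergraph.
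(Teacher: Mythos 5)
Your plan hinges on the reduction $(\star)$: that the Tur\'an number of a single fixed $r$-regular configuration is $(1+o(1))\binom{n-1}{k-1}$. This reduction fails, and the failure is not repairable by choosing a cleverer gadget of bounded size.

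For your $r\mid k$ gadget $F$ (the $r+1$ sets $W\setminus P_i$), consider the hypergraph $H$ with $E(H)=\{e\in\binom{V}{k}:|e\cap X|=1\}$ for a fixed $c$-set $X$ (this is Example \ref{example B} of the paper). If a copy of $F$ sat inside $H$, then writing $s=|W\cap X|$ and $a_i=|P_i\cap X|$, each edge forces $s-a_i=1$, and summing over $i$ gives $(r+1)(s-1)=s$, i.e.\ $s=1+1/r$, which is not an integer for $r\ge 2$. So $H$ is $F$-free with $c\binom{n-c}{k-1}$ edges; already $c=2$ contradicts $(\star)$. For your general-$k$ gadget ($r$ bipartitions of a $2k$-set), the statement $(\star)$ is precisely (a strengthening of) Conjecture \ref{r disjoint pairs} of this paper, which is open for every $r\ge 3$; the paper explicitly says Theorem \ref{generalized C4} (the $r=2$ case, due to Pikhurko--Verstra\"ete) is not known to generalize. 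Your ``core'' paragraph, which is where $(\star)$ would have to be proved, contains no actual argument --- ``iterated localization'' with unspecified Cauchy--Schwarz steps and unspecified deficiency tracking is a hope, not a proof, and in the $r\mid k$ case it is a hope for a false statement.

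The paper's route is structurally different precisely to avoid this trap: it never forbids a bounded-size configuration globally. It splits $H$ by the set $T$ of vertices of degree at least $D=n^{k-1-\alpha}$; the edges meeting $T$ at most once are handled by the sunflower lemma (Lemma \ref{lemma 1}), which produces an $r$-regular subgraph as a union of $r$ matchings of \emph{unbounded} size $m\approx |H|/(k\Delta)$ forming a sunflower --- this is what defeats examples like the $c$-star above, since those have small maximum degree off $X$ only when restricted suitably. Fixed configurations ($H(k,\ell)$, $H'(k,\ell)$, whose Tur\'an numbers are degenerate, $O(n^{k-1-2^{-\ell}})$) are used only inside the common links of \emph{pairs} of high-degree vertices, where there are few pairs ($\binom{|T|}{2}\le n^{2\alpha}$) to sum over. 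The stability clause is then obtained by counting copies of $H(k-1,\ell+1)$ among the link edges and showing almost all lie in a single $G_v$. If you want to salvage your write-up, you would need to abandon the single-gadget reduction and introduce some mechanism (such as the degree dichotomy plus sunflowers) that exploits $r$-regular subgraphs of growing size.
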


\begin{theorem}\label{main theorem}
Let $k,r$ be two integers with $r\in \{3,4\}$, $k\geq 140 r$ and $r \mid k$. Then there exists $n_k$ such that for $n>n_k$ any $n$-vertex $k$-uniform hypergraph $H$ with no $r$-regular subgraphs has at most ${{n-1}\choose {k-1}}$ edges. Moreover, the equality holds if and only if $H$ is a full $k$-star.
\end{theorem}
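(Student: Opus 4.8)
The plan is to use Theorem~\ref{asymptotic theorem} to reduce to the case of a single vertex $v$ lying in almost every edge of $H$, and then to show that the presence of even one edge missing $v$ already forces an $r$-regular subgraph. It therefore suffices to prove that any $n$-vertex $k$-uniform $H$ with no $r$-regular subgraph and $|H|\ge\binom{n-1}{k-1}$ is a full $k$-star (this yields the bound $|H|\le\binom{n-1}{k-1}$ for all such $H$, with equality only for the full $k$-star, and a full $k$-star has no $r$-regular subgraph since in an $r$-edge subgraph through its centre every vertex of any one edge would have to meet all $r$ edges, forcing these edges to coincide; here we use $r\ge2$).

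So let $H$ have no $r$-regular subgraph and $|H|\ge\binom{n-1}{k-1}$. As $k\ge140r\ge2r+1$ and $|H|\ge(1-n^{-2/(3r^2)})\binom{n-1}{k-1}$, the ``moreover'' part of Theorem~\ref{asymptotic theorem} gives a vertex $v$ in at least $(1-n^{-1/(6r^2)})\binom{n-1}{k-1}$ edges. Put $A=\{e\in H:v\in e\}$, $B=\{e\in H:v\notin e\}$, and let $\bar A$ be the family of $(k-1)$-subsets $S$ of $V(H)\setminus\{v\}$ with $\{v\}\cup S\notin H$; then $|\bar A|=\binom{n-1}{k-1}-|A|\le n^{-1/(6r^2)}\binom{n-1}{k-1}$, and $|B|=|H|-|A|\ge|\bar A|$. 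Since $|A|\le\binom{n-1}{k-1}$ and $|H|\ge\binom{n-1}{k-1}$, if $B=\emptyset$ then $|H|=|A|=\binom{n-1}{k-1}$, forcing $\bar A=\emptyset$, i.e.\ $H$ is the full $k$-star with centre $v$. Thus it remains to derive a contradiction from $B\ne\emptyset$.

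Assume $f\in B$. Since $r\mid k$, there is an $(r+1)$-edge $r$-regular $k$-uniform hypergraph: take pairwise disjoint sets $Y_1,\dots,Y_{r+1}$ of size $k/r$ and let $e_i=\bigcup_{j\ne i}Y_j$. I try to plant this in $H$ with $e_{r+1}=f$: partition $f=Y_1\sqcup\dots\sqcup Y_r$ into parts of size $k/r$, choose a $(k/r-1)$-set $Z\subseteq V(H)\setminus(f\cup\{v\})$, set $Y_{r+1}=\{v\}\cup Z$, and $e_i=(f\setminus Y_i)\cup\{v\}\cup Z$ for $i\le r$. These $e_1,\dots,e_r$ are pairwise distinct, all contain $v$, and all differ from $f$, so $\{f,e_1,\dots,e_r\}$ is an $r$-regular subgraph of $H$ unless $e_i\setminus\{v\}\in\bar A$ for some $i$. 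For fixed $f$, the number of (ordered partition, $Z$) pairs killed by $\bar A$ is at most $r\cdot\frac{(k-k/r)!}{((k/r)!)^{r-1}}\cdot\big|\{S\in\bar A:|f\cap S|=k-k/r\}\big|$, whereas the total number of such pairs is $\frac{k!}{((k/r)!)^{r}}\binom{n-k-1}{k/r-1}$; comparing these, $H$ contains an $r$-regular subgraph as soon as some $f\in B$ satisfies \[\big|\{S\in\bar A:|f\cap S|=k-k/r\}\big|<M:=\binom{k-1}{k/r-1}\binom{n-k-1}{k/r-1}.\] This holds for every $f\in B$ whenever $|\bar A|<M$, which disposes of the regime in which $H$ is very close to a full $k$-star.

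The remaining, and main, case is $|\bar A|\ge M$ (note $M$ is of order $n^{k/r-1}$, far below the a priori bound $n^{-1/(6r^2)}\binom{n-1}{k-1}$ on $|\bar A|$, so $\bar A$ can be large and positioned to block every single-bad-edge gadget). Here $|B|\ge|\bar A|\ge M$, so $B$ is large, and the plan is to use gadgets that contain several edges avoiding $v$: for each $\ell\ge1$ one can build an $(r+\ell)$-edge $r$-regular $k$-uniform hypergraph in which $v$ is a ``generic'' vertex of the $r$ edges it belongs to while the other $\ell$ edges miss $v$; using $r\mid k$, the vertex-class sizes of such a gadget can be taken to be integers when $r\in\{3,4\}$, which is exactly where the restriction $r\in\{3,4\}$ is used (the balancing conditions become harder to satisfy for larger $r$). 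When $|B|$ is large, an averaging argument produces $\ell$ edges of $B$ realising the intersection pattern required by the gadget, leaving only $r$ edges to be drawn from $A$; a global double-count over all such gadgets, using $|B|\ge|\bar A|$, should then show that $\bar A$ cannot kill all of them, contradicting that $H$ has no $r$-regular subgraph. I expect this global count to be the technical heart of the argument: $\ell$ and the gadget family must be chosen so that the number of gadgets outgrows $|\bar A|$ times the number that a single missing edge can destroy, which requires carefully balancing how the freedom in the ``new'' vertices of a gadget scales against the freedom in its bad edges as $n\to\infty$.
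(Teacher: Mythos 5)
Your reduction to a near-star via Theorem~\ref{asymptotic theorem}, and your first gadget (partition an edge $f\in B$ into $r$ parts of size $k/r$ and replace one part by $\{v\}\cup Z$), are exactly the paper's starting point (Observation~\ref{obs} together with Observation~\ref{obs2}, which is what produces the factor $\frac1r\binom{k}{k'}$ you compute as $M$). But the entire difficulty of the theorem lies in the regime you defer to the end, and there your proposal stops being a proof: ``an averaging argument produces $\ell$ edges of $B$ realising the intersection pattern'' and ``a global double-count \dots should then show'' are statements of intent, not arguments. The paper does not use multi-bad-edge gadgets at all. Instead it double-counts \emph{wedges}: pairs $(e,f)$ with $e\in H^*$ an actual edge missing $v$, $f\in\tilde H$ a non-edge containing $v$, and $|e\cap f|=k-k'$. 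Your gadget already gives the lower bound $\Lambda(H)\ge\frac1r\binom{k}{k'}\binom{n-k-1}{k'-1}|H^*|$ with no case split; the hard part is the matching upper bound. For a fixed non-edge $f$ and a fixed $(k-k')$-set $D\subseteq f\setminus\{v\}$ one must show the link $H^*_D$ has only $O(n^{k'-1})$ edges rather than the trivial $O(n^{k'})$, and this saving of a factor of $n$ is obtained by classifying $3$-sets as good or bad according to their codegree with $v$ in $\tilde H$, applying Theorem~\ref{obs 3matching}, and, crucially, Theorem~\ref{generalized C4} to extract four good $3$-sets $T_1,T_2,T_3,T_4$ with $T_1\cup T_2=T_3\cup T_4$ and $T_1\cap T_2=T_3\cap T_4=\emptyset$, which padded by a common $(k-4)$-set through $v$ yield simultaneously a $3$-regular and a $4$-regular configuration. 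None of this machinery appears in your sketch.

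Relatedly, your diagnosis of where $r\in\{3,4\}$ enters is not the actual obstruction: it is not an integrality condition on gadget class sizes (that is already guaranteed by $r\mid k$). It is that the generalized-four-cycle theorem supplies exactly \emph{two} disjoint pairs with equal union, and the eight edges so obtained contain an $r$-regular subgraph only for $r\in\{3,4\}$; handling $r\ge5$ would require the open Conjecture~\ref{r disjoint pairs}, as the paper explains in Section~\ref{example}. The hypothesis $k\ge140r$ likewise has a concrete job you do not account for: it makes $(k'-34)(k'-35)(k'-36)>1.01^2\cdot6(k'-1)(k'-2)$, which is the final contradiction in the wedge count. So your argument is sound and aligned with the paper up to and including the easy regime $|\bar A|<M$, but the main case is a genuine gap.
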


Our proofs of theorems develop ideas in \cite{MV2009}. In Section \ref{section asymptotic} and Section \ref{section stability}, we prove Theorem \ref{asymptotic} and Theorem \ref{stability} which together imply Theorem \ref{asymptotic theorem}. 
 In Section \ref{exact result}, we prove Theorem \ref{main theorem}. In  Section \ref{example}, we show some examples which somewhat explain the necessity of each condition in each theorem and we also pose some further questions.

\section{Preliminaries}\label{Preliminaries}

For a positive integer $N$ we write $[N]$ to denote the set $\{1,\dots,N\}$. We say $H$ has an {\em $r$-regular subgraph} if there exists a collection of edges in $E(H)$ which all together cover each vertex in a nonempty set exactly $r$-times and no other vertices. We write $V(H)$ and $E(H)$ for the set of vertices and the set of edges in a hypergraph $H$, respectively. We denote the size of $H$ by $|H|:=|E(H)|$. For a hypergraph $H$ and vertex $v$, $d_{H}(x):=|\{e\in E(H): x\in e\}|.$
$\log$ denotes $\log_2$ and {\em $s$-set} denotes a set of size $s$.
For a hypergraph $H$ and a vertex set $D$, we define $H_D$, the {\em link graph of $D$ in $H$} by $V(H_D):=V(H),~ E(H_D):=\{e\setminus D: e \in E(H), D\subseteq e\}.$ If $D=\{x\}$, we denote the link graph by $H_{x}$ instead of $H_{D}$.

First, we introduce the following simple observation which we use several times in the paper.
\begin{observation} \label{matching}
For $t>1$ and $n\geq 2k$, if an $n$-vertex $k$-uniform hypergraph $H$ has at least $t{{n-1}\choose {k-1}}$ edges, then $H$ contains a matching of size $\max\{2,\lceil \frac{t}{k} \rceil\}$.
\end{observation}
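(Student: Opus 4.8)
The plan is a one-step greedy argument backed up by a union-bound count, with the Erd\H{o}s--Ko--Rado theorem handling the degenerate regime. Let $M$ be a \emph{maximum} matching in $H$, set $s:=|M|$, and let $W:=\bigcup_{e\in M}e$, so that $|W|=sk$ since the edges of $M$ are pairwise disjoint. Because $M$ cannot be extended, every edge of $H$ meets $W$. Each vertex lies in at most $\binom{n-1}{k-1}$ edges of $H$ (there are only that many $k$-subsets of an $n$-set through a fixed vertex), so a union bound over $W$ gives
\[
t\binom{n-1}{k-1}\le |H|\le \sum_{v\in W}d_H(v)\le sk\binom{n-1}{k-1}.
\]
Hence $sk\ge t$, and since $s$ is an integer, $s\ge\lceil t/k\rceil$.

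It remains to force $s\ge 2$, which is only an issue when $t\le k$. Note first that $|H|\ge t\binom{n-1}{k-1}>0$, so $H$ has an edge and $s\ge 1$. If $s=1$, then no two edges of $H$ are disjoint, i.e.\ $H$ is an intersecting family; since $n\ge 2k$, the Erd\H{o}s--Ko--Rado theorem gives $|H|\le\binom{n-1}{k-1}$, contradicting $|H|\ge t\binom{n-1}{k-1}>\binom{n-1}{k-1}$, where the last inequality uses $t>1$. Therefore $s\ge 2$, and combining with the previous paragraph, $s\ge\max\{2,\lceil t/k\rceil\}$; taking a sub-matching of $M$ of this size finishes the proof.

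I do not expect a genuine obstacle: the statement reduces to the crude bound $|H|\le \nu(H)\cdot k\binom{n-1}{k-1}$ together with Erd\H{o}s--Ko--Rado. The only point needing care is that the naive count does not exclude $\nu(H)=1$ when $t\le k$, so the hypothesis $n\ge 2k$ is invoked there (and nowhere else), precisely so that the Erd\H{o}s--Ko--Rado bound $\binom{n-1}{k-1}$ applies and, since $t>1$, is contradicted.
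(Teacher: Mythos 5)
Your proof is correct and is essentially the paper's argument in slightly different clothing: the paper greedily extends a matching using the same bound $\ell k\binom{n-1}{k-1}$ on the edges meeting $\ell$ disjoint edges, and likewise invokes Erd\H{o}s--Ko--Rado for the size-$2$ floor when $t\le 2k$. Your maximum-matching/vertex-cover phrasing just packages the same count a bit more uniformly; no gap.
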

\begin{proof}
If $t\leq 2k$, it is obvious by Erd\H{o}s-Ko-Rado theorem \cite{EKR}.
Assume $t > 2k$. We greedily choose disjoint edges from $H$. If we choose $\ell<\lceil \frac{t}{k} \rceil$ disjoint edges, the number of edges intersecting at least one of them is at most $\ell k {{n-1}\choose {k-1}} < t{{n-1}\choose {k-1}}$. Thus we can choose an edge disjoint from all previous ones to extend the matching. We can do this until we get $\lceil \frac{t}{k} \rceil$ disjoint edges to get a matching of size $\max\{2,\lceil \frac{t}{k} \rceil \}$. \end{proof}

The following is another simple observation which we will use later.

\begin{observation}\label{obs2}
Let $r,k',k$ be integers with $k=rk'$ and $\mathcal{B}\subseteq {{[k]}\choose{k'}}$ satisfying that for any $r$-equipartition $A_1,\dots, A_{r}$ of $[k]$, at least one $A_i$ belongs to $\mathcal{B}$. Then 
$$|\mathcal{B}| \geq \frac{1}{r}{{k}\choose{k'}}.$$
\end{observation}
\begin{proof}
We pick an $r$-equipartition $\mathcal{A}=(A_1,\dots,A_r)$ of $[k]$ uniformly at random. For a set $B\in {{[k]}\choose{k'}}$, we say $B\in \mathcal{A}$ if $B=A_i$ for some $i\in [r]$.
Since $\mathcal{A}$ is chosen uniformly at random, for any $k'$-set $B \in {{[k]}\choose{k'}}$ we have
$\mathbb{P}[ B \in \mathcal{A} ] = r {{k}\choose{k'}}^{-1}.$
For any $\mathcal{A}$, there exists $B\in \mathcal{B}$ which satisfy $B\in \mathcal{A}$. Thus we have 
$$\mathbb{E}[ |\{B : B\in \mathcal{A}, B\in \mathcal{B}\}| ] \geq 1.$$
On the other hand,
$$1\leq \mathbb{E}[ |\{B : B\in \mathcal{A}, B\in \mathcal{B}\}| ] = \sum_{B \in \mathbb{B}} \mathbb{P}[ B\in \mathcal{A} ] \leq r{{k}\choose{k'}}^{-1}|\mathcal{B}|.$$
Therefore $|B| \geq \frac{1}{r}{{k}\choose{k'}}.$
\end{proof}

The following is a theorem from \cite{FRR} concerning about the size of hypergraph without a matching of certain size.

\begin{theorem}\cite{FRR}\label{obs 3matching}
For $s\geq 1$ and $n\geq 4s$, if $H$ is an $n$-vertex $3$-uniform hypergraph with no matching of size $s$, then 
$$ |H| \leq {{n}\choose{3}} - {{n-s+1}\choose{3}}.$$
\end{theorem}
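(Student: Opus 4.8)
The natural plan is an induction on $s$ (the case $s=1$ being trivial, since then $H$ is edgeless), combined with the standard left-shift reduction: the shift $S_{ij}$ preserves $|H|$ and cannot increase $\nu(H)$, so we may assume $H$ is shifted. Throughout, the conjectured extremal family is $\cK$, the $3$-graph consisting of all triples meeting a fixed $(s-1)$-set; it has exactly $\binom n3-\binom{n-s+1}3$ edges and matching number $s-1$.

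For the inductive step I would first dispatch the easy case. If some vertex $v$ lies in every maximum matching of $H$, then $\nu(H-v)\le s-2$, so applying the statement to $H-v$ (legitimate since $n-1\ge 4(s-1)$) gives $|H-v|\le\binom{n-1}3-\binom{n-s+1}3$; as $d_H(v)\le\binom{n-1}2$, this yields $|H|\le\binom n3-\binom{n-s+1}3$ — and this is exactly how $\cK$ behaves, since deleting a vertex of its fixed set drops the matching number. So we may assume that no vertex lies in all maximum matchings; then $\nu(H)=s-1$, and for every vertex $v$ there is a maximum matching $M_v$ avoiding $v$. Fix one maximum matching $M=\{m_1,\dots,m_{s-1}\}$, set $U:=\bigcup_i m_i$ (so $|U|=3s-3$), and note that every edge of $H$ meets $U$. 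Two structural facts follow. First, writing $L_a:=\{P\subseteq V\setminus U:\{a\}\cup P\in H\}$ for $a\in U$, the three graphs $L_a,L_b,L_c$ attached to $m_i=\{a,b,c\}$ are pairwise cross-intersecting on $V\setminus U$: disjoint pairs $P\in L_a$, $Q\in L_b$ would let us replace $m_i$ by $\{a\}\cup P$ and $\{b\}\cup Q$, enlarging $M$. Second, since $M_v$ avoids $v$, every edge through $v$ meets $V(M_v)$ outside $v$, so $d_H(v)\le\binom{n-1}2-\binom{n-3s+2}2$ for every vertex $v$.

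It remains to convert this structure into the bound, which is the technical heart of the matter (the content of \cite{FRR}). Split edges by $|e\cap U|\in\{1,2,3\}$: at most $\binom{3s-3}2(n-3s+3)+\binom{3s-3}3$ edges have $|e\cap U|\ge2$, while the edges with $|e\cap U|=1$ contribute $\sum_i\big(|L_{a_i}|+|L_{b_i}|+|L_{c_i}|\big)$, each summand running over a pairwise cross-intersecting triple of graphs on the $n-3s+3$ vertices of $V\setminus U$. Such a triple has total size either $O(n)$ (three stars through a common vertex, or triangles) or is ``degenerate'' — one link nearly complete and the other two nearly empty, total up to $\binom{n-3s+3}2$. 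The crude bound of $\binom{n-3s+3}2$ per $m_i$ overshoots $\binom n3-\binom{n-s+1}3$ by a constant factor, so the real work is to show, using shiftedness together with the room guaranteed by $n\ge 4s$, that the heavy vertex of a degenerate link is essentially forced among the $s-1$ smallest vertices — so that $H$ can only approach the bound by approaching $\cK$ — while the degree bound caps the contribution of the remaining vertices. Making this quantitative enough that the estimates telescope exactly to $\binom n3-\binom{n-s+1}3$, with no constant-factor slack, is the main obstacle; in the ``no common vertex'' case one in fact expects the strict inequality $|H|<\binom n3-\binom{n-s+1}3$, and a plausible route is a secondary induction (on $n$, or on $\sum_v d_H(v)$) that repeatedly deletes a maximum-degree vertex and controls the residual $3$-graph through the two facts above.
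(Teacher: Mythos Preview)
The paper does not prove this statement at all: Theorem~\ref{obs 3matching} is quoted as a black-box result from \cite{FRR} (Frankl, R\"odl, Ruci\'nski), with no proof or sketch supplied. So there is nothing in the present paper to compare your proposal against.

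As for the proposal itself, the overall architecture you describe --- shift to a left-compressed family, induct on $s$, handle first the case where some vertex lies in every maximum matching (reducing to $(n-1,s-1)$), and in the remaining case work with a fixed maximum matching $M$, the cross-intersecting link graphs on $V\setminus U$, and a degree bound coming from a matching avoiding $v$ --- is indeed broadly the strategy of \cite{FRR}. But what you have written is explicitly a plan rather than a proof: you yourself flag that ``making this quantitative enough that the estimates telescope exactly \dots\ is the main obstacle'' and offer only a ``plausible route''. That obstacle is real; the crude count $\binom{3s-3}{3}+\binom{3s-3}{2}(n-3s+3)+(s-1)\binom{n-3s+3}{2}$ genuinely overshoots the target, and the actual argument in \cite{FRR} requires a careful case analysis on the structure of the cross-intersecting link triples together with shiftedness to pin down when the degenerate configuration can occur. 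Your sketch does not carry this out, so as it stands it is not a proof but a correct identification of where the difficulty lies.
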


Now we introduce the notion of {\em sunflower}. Erd\H{o}s and Rado~\cite{ER0} introduced the following notion of sunflower in connection with some problems in Number Theory. It is also called a {\em $\Delta$-system}.

\begin{definition} \label{delta-system}
A family of $p$ sets is a $p$-sunflower if the intersections of any
two sets in the family are all the same. Let $q(k,p)$ be the least integer $q$
such that every $k$-uniform family of $q$ sets contains a $p$-sunflower.
\end{definition}
They also showed that $q(k,p)$ exists for any positive integer $k,p$. It means that if a $k$-uniform hypergraph has no $p$-sunflower, then the number of edges in the hypergraph is bounded by $q(k,p)$. In particular, they proved the following.
\begin{theorem}\cite{ER0} \label{ERDelta}
$$(p-1)^{k} \leq q(k,p) \leq (p-1)^k k! $$
\end{theorem}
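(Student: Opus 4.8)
The plan is to prove the two inequalities separately. For the lower bound $q(k,p)\ge (p-1)^k$ I would exhibit an explicit $k$-uniform family of $(p-1)^k$ sets containing no $p$-sunflower, and for the upper bound $q(k,p)\le (p-1)^k k!$ I would run the classical Erd\H{o}s--Rado induction on $k$, showing that any $k$-uniform family with more than $(p-1)^k k!$ sets already contains a $p$-sunflower.

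\emph{Lower bound.} Fix pairwise disjoint sets $B_1,\dots,B_k$, each of size $p-1$, and let $\mathcal{F}$ consist of all transversals, i.e.\ all $k$-sets $T$ with $|T\cap B_i|=1$ for every $i\in[k]$. Then $|\mathcal{F}|=(p-1)^k$ and the members of $\mathcal{F}$ are pairwise distinct. Suppose $F_1,\dots,F_p\in\mathcal{F}$ formed a $p$-sunflower with core $C$, so that $F_a\cap F_b=C$ for all $a\ne b$. For each $i\in[k]$ the $p$ points $F_1\cap B_i,\dots,F_p\cap B_i$ lie in the $(p-1)$-set $B_i$, so by pigeonhole two of them coincide, say in a point $x_i\in B_i$, and then $x_i\in F_a\cap F_b=C$ for the corresponding pair of indices. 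Since the $B_i$ are disjoint, $x_1,\dots,x_k$ are $k$ distinct elements of $C$, whence $|C|\ge k$; but $C\subseteq F_1$ and $|F_1|=k$ force $C=F_1$, so $F_1=F_1\cap F_j\subseteq F_j$ for every $j$, giving $F_1=\dots=F_p$, a contradiction. Thus $\mathcal{F}$ has no $p$-sunflower and $q(k,p)\ge (p-1)^k$.

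\emph{Upper bound.} Induct on $k$. For $k=1$, a family of more than $(p-1)\cdot 1!=p-1$ distinct singletons contains $p$ pairwise disjoint sets, which form a $p$-sunflower with empty core. For the inductive step let $\mathcal{F}$ be $k$-uniform with $|\mathcal{F}|>(p-1)^k k!$, and take a maximal subfamily $F_1,\dots,F_m\in\mathcal{F}$ of pairwise disjoint sets. If $m\ge p$, then $p$ of these form a $p$-sunflower. Otherwise $m\le p-1$, and by maximality every member of $\mathcal{F}$ meets $Y:=F_1\cup\dots\cup F_m$, where $|Y|\le (p-1)k$; hence some $y\in Y$ lies in more than $(p-1)^k k!/((p-1)k)=(p-1)^{k-1}(k-1)!$ members of $\mathcal{F}$. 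Deleting $y$ from each of these members gives a $(k-1)$-uniform family of more than $(p-1)^{k-1}(k-1)!$ sets (the deletion is injective on sets containing $y$), which by the inductive hypothesis contains a $p$-sunflower; re-inserting $y$ into each petal yields a $p$-sunflower of $\mathcal{F}$, since every petal gains the same single element $y\notin$ petal, so all pairwise intersections still coincide (the core grows by exactly $y$). This gives $q(k,p)\le (p-1)^k k!$.

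\emph{Main difficulty.} There is no real obstacle, as the statement is classical; the only delicate point is the counting in the inductive step — namely that the dichotomy between ``$\ge p$ pairwise disjoint sets'' and ``a single element of high degree'' is exhaustive, and that the averaging bound $|\mathcal{F}|/|Y|>(p-1)^{k-1}(k-1)!$ is precisely what the induction needs — together with the bookkeeping that removing and re-inserting $y$ preserves both $(k-1)$-uniformity and the sunflower property. (The upper bound is to be read with the usual additive slack: the argument literally shows that more than $(p-1)^k k!$ sets force a $p$-sunflower.)
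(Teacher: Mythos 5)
Your proof is correct, and it is the classical Erd\H{o}s--Rado argument; note that the paper does not actually prove this statement but only quotes it from \cite{ER0}, so there is no in-paper proof to compare against. Both halves check out: the family of transversals of $k$ disjoint blocks of size $p-1$ does exclude $p$-sunflowers by exactly the pigeonhole argument you give, and the induction via a maximal pairwise disjoint subfamily versus a high-degree element is the standard one, with the link/re-insertion step ($y\notin G_a$ for every petal $G_a$, so the core grows by exactly $\{y\}$) handled correctly. The only point worth making explicit is the off-by-one you already flag: with $q(k,p)$ defined as the \emph{least} $q$ such that every $k$-uniform family of $q$ sets contains a $p$-sunflower, your argument yields $q(k,p)\le (p-1)^k k!+1$ rather than $(p-1)^k k!$, and indeed the literal upper bound in the displayed statement fails for $k=1$, since $q(1,p)=p>(p-1)\cdot 1!$. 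That is an imprecision in the quoted statement (common in the literature) rather than a gap in your argument, and it is harmless here: the paper never uses this theorem quantitatively, relying instead on Kostochka's stronger bound in its Lemma~2.8.
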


They also conjectured that $q(k,p)\leq c_p^k$ for some constant $c_p$.
Abbott, Hanson, and Sauer \cite{AHS} and later F\"{u}redi and Kahn (see \cite{Erdos prob}) improved the upper bound of Theorem \ref{ERDelta}.
The following result on the topic is due to Kostochka. 

\begin{theorem}[Kostochka~\cite{Kos96}] \label{delta-system}
For $p\geq 3$ and $\alpha>1$, there exists $D(p,\alpha)$ such that
$q(k,p) \leq D(p,\alpha) k! (\frac{(\log\log\log{k})^2}{\alpha \log\log {k}})^k$.
\end{theorem}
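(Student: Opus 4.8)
This is Kostochka's theorem, quoted verbatim; in the body of the paper we use it only as a black box. For completeness we indicate how one would prove it. The plan is to follow the link‑reduction scheme behind Theorem~\ref{ERDelta}, but to push it much further. Fix $p$ and set $f(k):=q(k,p)$; we bound $f(k)$ by induction on $k$. Let $\mathcal F$ be a $k$-uniform family with $|\mathcal F|=f(k)$ and no $p$-sunflower, and take a maximal subfamily of pairwise disjoint members; it has at most $p-1$ members (else we are done), so its union $Y$ satisfies $|Y|\le (p-1)k$ and meets every member of $\mathcal F$. The Erd\H{o}s--Rado step picks $x\in Y$ with $d_{\mathcal F}(\{x\})\ge |\mathcal F|/|Y|$ and recurses on the link $\mathcal F_{\{x\}}$, which is $(k-1)$-uniform and again has no $p$-sunflower (a $p$-sunflower in $\mathcal F_{\{x\}}$, augmented by $x$, would be one in $\mathcal F$); this gives $f(k)\le (p-1)k\,f(k-1)$, hence $f(k)\le (p-1)^k k!$.

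To do better, the idea is to delete not a single vertex but a whole ``over-popular'' small set. First one fixes a slowly growing threshold $s=s(k)$ and a popularity function $h(\cdot)$, and proves a dichotomy: either
\begin{enumerate}
\item[(a)] some set $S$ with $1\le|S|\le s$ satisfies $d_{\mathcal F}(S)\ge |\mathcal F|/h(|S|)$ with $h(|S|)$ much smaller than the crude bound $|Y|^{|S|}$ obtained by averaging over all $|S|$-subsets of $Y$ --- in which case one recurses on $\mathcal F_S$, which is $(k-|S|)$-uniform and sunflower-free, so the cost of lowering the uniformity by $|S|$ is only the factor $h(|S|)$, i.e.\ an amortised per-unit cost far below the $(p-1)k$ of the Erd\H{o}s--Rado step; or
\item[(b)] no such $S$ exists, so every small set has near-generic degree, and in this ``flat'' regime a direct double-counting argument (or an iterated random-restriction argument, in which a random subset of the ground set of the right density is shown to contain a full member of $\mathcal F$ with positive probability and $p$ coupled copies then yield a $p$-sunflower) forces $|\mathcal F|$ below the target.
\end{enumerate}
Unwinding the recursion through case~(a), the product of the $h(|S|)$-factors is what produces the $k!$; the point at which case~(b) becomes unavoidable is what controls the exponential base, and with $s$ and $h$ chosen optimally it can be pushed down to $\frac{(\log\log\log k)^2}{\alpha\log\log k}$, the dependence on $p$ (and the slack afforded by $\alpha>1$) being absorbed, along with the finitely many base cases, into the constant $D(p,\alpha)$.

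The main obstacle is the quantitative core of the dichotomy --- showing that a family in which no small set is over-popular must itself be sunflower-rich, and with a trade-off tight enough to hit exactly the claimed base rather than merely some $o(1)$ improvement over $(p-1)$. This forces one to choose $s(k)$ and $h$ in tandem, to check that the $(k-|S|)$-uniform links stay large enough to sustain the iteration, and to verify that the recursion closes with a single constant $D(p,\alpha)$; it is this bookkeeping, rather than any single clever step, that carries the weight of the theorem. None of it is needed below beyond the stated inequality.
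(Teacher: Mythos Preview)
The paper does not prove this theorem at all; it is quoted from \cite{Kos96} and used purely as a black box (the very next sentence after the statement begins ``Essentially, Theorem~\ref{delta-system} implies that\ldots''). You correctly identify this in your first sentence, so there is nothing to compare against.

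Your additional sketch of the proof strategy is a reasonable high-level description of the link-reduction/dichotomy approach behind improvements to the Erd\H{o}s--Rado bound, but since the paper offers no proof of its own, this material is extra rather than a substitute for anything in the paper. If your intent is simply to justify treating the result as a citation, the first sentence of your proposal already suffices; the remainder is informal exposition that would not be expected (and could not be checked against the paper) in this context.
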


Essentially, Theorem \ref{delta-system} implies that there exists a constant $c(p)$ such that $q(k,p) \leq \frac{k^k}{(\log\log{k})^{k/2}}$ for $k$ at least $c(p)$. By using Theorem \ref{delta-system}, we prove the following lemma which is a variation of Lemma 1 in \cite{MV2009}. Note that the proof is identical to the proof of Lemma 1 in \cite{MV2009} except the part using Theorem \ref{delta-system}.

\begin{lemma} \label{lemma 1}
There exists a constant $c(r)$ such that the following holds.
Let $k,r$ be integers and $H$ be a $k$-uniform hypergraph on $n$ vertices containing no $r$-regular subgraphs with maximum degree $\Delta=\Delta(H)$.  If $|H| \geq c(r) \Delta k$, then $$|H|\leq \frac{6n^{k/(k-1)}\Delta^{(k-2)/(k-1)}}{(\log\log{\frac{|H|}{k\Delta}})^{\frac{1}{2(k-1)}}}.$$
\end{lemma}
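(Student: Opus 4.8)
The plan is to adapt the argument of Lemma 1 in \cite{MV2009}, replacing the classical Erd\H{o}s--Rado sunflower bound by Kostochka's bound (Theorem \ref{delta-system}). The starting point is the observation that if $H$ contains a $p$-sunflower for a suitable $p=p(r)$, then $H$ has an $r$-regular subgraph. Indeed, a $p$-sunflower consists of $p$ sets $S_1,\dots,S_p$ with common core $C$ and pairwise disjoint petals $P_i=S_i\setminus C$. If $p\ge r+1$, then taking any $r$ of these sets, say $S_1,\dots,S_r$, the multiset union covers every vertex of $C$ exactly $r$ times and every vertex of $P_1\cup\dots\cup P_r$ at most once --- this is not yet $r$-regular, but we can combine petals: since petals are pairwise disjoint, if we have at least $r$ petals of the same size we do not immediately get regularity either. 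The correct statement, and the one used in \cite{MV2009} for $r=2$, is that a sufficiently large sunflower (with $p$ a function of $r$, in fact $p$ can be taken around $r^2$ or so, certainly bounded in terms of $r$) forces an $r$-regular subgraph; I would first isolate this as the combinatorial core, quoting or reproving the elementary fact that there is $p=p(r)$ with ``$k$-uniform, $p$-sunflower-free'' $\Rightarrow$ ``no $r$-regular subgraph is forced'' reversed, i.e.\ containing a $p$-sunflower yields an $r$-regular subgraph by taking unions of petals in groups to equalize coverage.

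Granting that, the hypergraph $H$ (being $r$-regular-subgraph-free) is $p(r)$-sunflower-free, so by Theorem \ref{delta-system}, any subfamily of $H$ of size exceeding $q(k,p(r))\le \frac{k^k}{(\log\log k)^{k/2}}$ (valid once $k\ge c(p(r))=:c(r)$) contains a $p(r)$-sunflower, a contradiction. This bounds the number of edges through any fixed small set. The key mechanism, exactly as in \cite{MV2009}, is a double-counting / fractional relaxation argument: one counts pairs (edge, $(k-1)$-subset of the edge), or more efficiently considers the ``shadow'' structure. Concretely, for each $(k-1)$-set $T$ contained in some edge of $H$, let $d(T)$ be its codegree. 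The family of edges containing $T$, after deleting $T$, is a $1$-uniform family — too degenerate; instead one works with $2$-shadows or chooses an intermediate layer. The cleanest route is: for a random subset $W$ of $V(H)$ of an appropriate size $w$, consider the edges of $H$ contained in... no — rather, one partitions $V(H)$ randomly and counts how sunflower-freeness inside each part limits edge counts, then optimizes $w$. Each ``trace'' class is a $k$-uniform sunflower-free family on $\approx w$ vertices, hence has size $\le q(k,p(r))$; combined with $\Delta$ controlling overlaps and the number of classes being roughly $(n/w)^{\text{something}}$, one gets $|H|\le$ (number of classes)$\cdot q(k,p(r))$ plus error, and choosing $w$ to balance yields the exponent $k/(k-1)$ on $n$, the $(k-2)/(k-1)$ on $\Delta$, and the $(\log\log)^{-1/(2(k-1))}$ savings that is the $1/(k-1)$ power of the savings in $q(k,p(r))$.

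The hypothesis $|H|\ge c(r)\Delta k$ is what lets one pass from ``$k\ge c(r)$'' to a genuine bound: it guarantees $|H|/(k\Delta)$ is large enough that $\log\log(|H|/(k\Delta))$ is positive and the displayed inequality is meaningful (and non-vacuous), and it ensures that in the random-partition step the expected number of edges landing in a typical part exceeds the sunflower threshold unless $w$ is taken appropriately large. I would set $w$ so that the expected codegree-weighted count in a part is comparable to $q(k,p(r))$, extract a part violating sunflower-freeness if $|H|$ were too large, and read off the bound. The constants $6$ and the precise exponent $\tfrac{1}{2(k-1)}$ come from: Theorem \ref{delta-system} giving a factor $(\log\log k)^{-k/2}$ savings over $k^k$, Stirling to compare $k^k$ with $k!\,e^k$ or with $\binom{n}{k}$-type quantities, and then taking a $(k-1)$-st root during the optimization of $w$; these are the routine calculations I would not grind through here.

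The main obstacle I anticipate is \emph{not} the sunflower-to-regular-subgraph implication (elementary, and known for the shape needed) nor Kostochka's estimate (quoted), but rather the bookkeeping in the random-partition / double-counting step: one must track the maximum degree $\Delta$ carefully so that edges are not overcounted across parts, handle the boundary case where $|H|/(k\Delta)$ is only moderately large, and verify that the loss incurred at each inequality is absorbed by the constant $6$. Since \cite{MV2009} carried this out for $r=2$ and the authors remark the proof is ``identical except the part using Theorem \ref{delta-system}'', the safest presentation is to reproduce their structure verbatim, substitute $q(k,p(r))\le k^k(\log\log k)^{-k/2}$ wherever they used $q(k,2)\le 2^k k!$ or its improvement, and note that replacing the savings factor propagates through the optimization to give exactly the stated $\bigl(\log\log\tfrac{|H|}{k\Delta}\bigr)^{-1/(2(k-1))}$ factor.
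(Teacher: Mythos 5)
Your central combinatorial claim --- that a sufficiently large sunflower \emph{in $H$ itself} forces an $r$-regular subgraph --- is false, and the hesitation visible in your first paragraph is warranted. A full $k$-star contains $p$-sunflowers for every $p\le\lfloor(n-1)/(k-1)\rfloor$ (take edges through the centre that pairwise meet only in the centre), yet it has no $r$-regular subgraph for any $r\ge 2$. More generally, $r$ sets with common core $C$ and pairwise disjoint petals cover $C$ exactly $r$ times but each petal vertex only once, and no ``combining of petals in groups'' can repair this, since edges may not be reused. Consequently, $H$ having no $r$-regular subgraph does \emph{not} make $H$ sunflower-free, and the second half of your argument (bounding $|H|$ by $q(k,p(r))$ times a number of trace classes) has no foundation; the random-partition/shadow mechanism you sketch is also not what is needed. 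The paper applies the sunflower lemma at a different level: it sets $m=\lfloor |H|/(k\Delta)\rfloor$, greedily shows that $H$ contains at least $(k\Delta)^m$ matchings of size $m$, pigeonholes over the $\binom{n}{mk}$ possible spanned vertex sets to find at least $q(m,r)$ matchings covering the \emph{same} $mk$-set, and then extracts an $r$-sunflower among these matchings regarded as $m$-element sets of edges (an $m$-uniform auxiliary hypergraph). There the implication does hold: deleting the common core of edges from each of the $r$ matchings leaves $r$ pairwise edge-disjoint matchings covering the same vertex set, whose union is an $r$-regular subgraph.

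This also pinpoints a second error in your accounting. You propose to invoke Kostochka's bound at uniformity $k$, harvesting a $(\log\log k)^{-k/2}$ saving, but the stated conclusion involves $\log\log\frac{|H|}{k\Delta}$, not $\log\log k$. That factor arises because Theorem \ref{delta-system} is applied to $q(m,r)$ with $m=\lfloor|H|/(k\Delta)\rfloor$, and the $(k-1)$-st root comes from solving the resulting inequality for $|H|$. This is also the true role of the hypothesis $|H|\ge c(r)\Delta k$: it guarantees $m\ge c(r)$ so that the sunflower estimate at uniformity $m$ is available, not merely that the displayed bound is ``meaningful.''
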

\begin{proof}
Let $m=\lfloor \frac{|H|}{k\Delta} \rfloor$.
Suppose $|H|\geq c(r) k\Delta$ and $$|H|> \frac{6n^{k/(k-1)}\Delta^{(k-2)/(k-1)}}{(\log\log{m})^{\frac{1}{2(k-1)}}}$$ for a contradiction. Then $m\geq c(r)$. These assumptions imply that 

$$ m^{k-1}=(\frac{|H|}{k\Delta})^{k-1} \geq \frac{6^{k-1}n^k \Delta^{k-2}}{k^{k-1}\Delta^{k-1}(\log\log{m})^{1/2}} = \frac{1}{k\Delta (\log\log{m})^{1/2}} \frac{6^{k-1}n^k}{k^{k-2}}.$$

So, we get 
\begin{equation} \label{2.1}
(k\Delta)^{m} \geq (\frac{6^{k-1}n^k}{m^{k-1} k^{k-2}(\log\log{m})^{1/2}})^m > \frac{k^{2m} m^m}{(\log\log{m})^{m/2}} (\frac{3n}{mk})^{mk} > \frac{k^{2m} m^m}{(\log\log{m})^{m/2}} { n\choose {mk}}.
\end{equation}

Now we count the matchings of size $m$ in $H$. We may greedily pick edges $e_1,e_2,\cdots, e_m$ so that edges are disjoint. At first, we have $|H|$ choices for $e_1$. In each step, we exclude all edges intersecting previously chosen edges from the list of choices. Then we exclude at most $k\Delta$ edges in each step. Thus we conclude that the number of matchings of size $m$ in $H$ is at least

\begin{equation} \label{2.2}
\frac{1}{m!} \prod_{i=0}^{m-1}(|H|-k\Delta i) = \frac{1}{m!} |H|^m \prod_{i=0}^{m-1} (1-\frac{k\Delta i}{|H|}) \geq\frac{1}{m!} |H|^m \prod_{i=0}^{m-1} (1-\frac{i}{m})\geq (k\Delta)^m.
\end{equation} 

Because the number of $mk$-sets in $V(H)$ is ${ n\choose {mk}}$, (\ref{2.1}) and (\ref{2.2}) together assert that there are at least $ \frac{k^{2m} m^m}{(\log\log{m})^{1/2}} \geq q(m,r) $ distinct matchings $M_1, M_2,\cdots, M_{q(m,r)}$ covering exactly the same set $M$ of size $mk$.
Consider the following auxiliary hypergraph $\mathcal{H}$ with $$V(\mathcal{H})= \{e \in H\},\enspace E(\mathcal{H}) = \{M_i : i=1,\cdots, q(m,r) \}.$$ Note that a vertex in $\mathcal{H}$ is an edge in $H$, and an edge in $\mathcal{H}$ is a matching of size $m$ in $H$. By Theorem \ref{delta-system} there are at least $r$ distinct matchings $M_{i_1},\cdots, M_{i_r}$ which together form an $r$-sunflower in $\mathcal{H}$. 
By the definition of $r$-sunflower, there exists a set $M'$ such that $M'=M_{i_j}\cap M_{i_{j'}}$ for any $j,j' \in [r]$ with $j\neq j'$. Then $M_{i_j} - M'$ for $j=1,2,\cdots, r$ are $r$ disjoint matchings covering the same set $ M-\bigcup_{e\in M'} e$.
 Thus $\bigcup_{j=1}^{r} (M_{i_j}-M)$ gives us an $r$-regular subgraph of $H$, it is a contradiction.\end{proof}

We also use the following theorem of Pikhurko and Verstra\"{e}te in several places.
\begin{theorem}\label{generalized C4} \cite{PV}
For $k\geq 3$, if $H$ is an $n$-vertex $k$-uniform hypergraph with at least $\frac{7}{4}{{n-1}\choose {k-1}}$ edges, then $H$ contains two pairs of sets $\{A,B\}, \{C,D\}$ so that 
$$ A\cap B = C\cap D =\emptyset,\enspace A\cup B = C\cup D.$$
\end{theorem}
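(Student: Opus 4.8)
The plan is to prove the bound by first rewriting the forbidden configuration as a ``combinatorial rectangle'' and then inducting on $n$. Suppose $A,B,C,D\in E(H)$ with $A\cap B=C\cap D=\emptyset$, $A\cup B=C\cup D$ and $\{A,B\}\neq\{C,D\}$, and set $P=A\cap C$, $Q=A\cap D$, $R=B\cap C$, $T=B\cap D$. Then $P,Q,R,T$ are pairwise disjoint, $A=P\cup Q$, $B=R\cup T$, $C=P\cup R$, $D=Q\cup T$, and a size count gives $|P|=|T|=:s$, $|Q|=|R|=k-s$, while $\{A,B\}\neq\{C,D\}$ forces $1\le s\le k-1$; conversely, any pairwise disjoint $s$-sets $X_1,X_2$ and $(k-s)$-sets $Y_1,Y_2$ with all four sets $X_i\cup Y_j$ in $H$ yield such a configuration. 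Hence $H$ contains no generalized $C_4$ if and only if for every $s\in[1,k-1]$ and every pair of disjoint $s$-sets $X_1,X_2$ the family $\{Y:Y\cap(X_1\cup X_2)=\emptyset,\ X_1\cup Y\in H,\ X_2\cup Y\in H\}$ of $(k-s)$-sets is intersecting. The case $s=1$ reads: for any two vertices $a,b$ the family $\mathcal L_{ab}:=\{f\in\binom{V(H)\setminus\{a,b\}}{k-1}:f\cup\{a\}\in H,\ f\cup\{b\}\in H\}$ is intersecting, so by the Erd\H{o}s--Ko--Rado theorem $|\mathcal L_{ab}|\le\binom{n-3}{k-2}$ whenever $n\ge 2k$.

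Next I would induct on $n$, the small cases being checked directly from the Erd\H{o}s--Ko--Rado theorem and Observation~\ref{matching}. For the step, if some vertex $v$ has $d_H(v)\le\frac74\binom{n-2}{k-2}$, then deleting it and applying the induction hypothesis to the generalized-$C_4$-free $k$-graph $H-v$ gives $|H|\le\frac74\binom{n-2}{k-2}+\frac74\binom{n-2}{k-1}=\frac74\binom{n-1}{k-1}$ by Pascal's identity, and we are done. So it remains to rule out $\delta(H)>\frac74\binom{n-2}{k-2}$ for a generalized-$C_4$-free $H$. Here I would take a vertex $u$ of maximum degree $\Delta$. If $\Delta$ is a positive constant fraction of $\binom{n-1}{k-1}$, then the link $H_u$ is dense, so many edges of $H-u$ have the shape $\{b\}\cup f$ with $f\in H_u$; the $s=1$ condition applied with $a=u$ then forces all these edges to meet pairwise in $0$ or at least $2$ vertices, whence (each of their links being intersecting, again by the $s=1$ condition) their number is $o\big(\binom{n-1}{k-1}\big)$, which is incompatible with $\delta(H)$ being that large. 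If instead every vertex has degree $o\big(\binom{n-1}{k-1}\big)$, then Observation~\ref{matching} yields long matchings, which together with the conditions for $s\ge1$ should allow one to locate a rectangle directly.

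The hard part is exactly this last dichotomy: converting ``large minimum degree plus all the intersecting-family conditions'' into an actual generalized $C_4$, with the constant coming out to $\frac74$. The naive first attempt is far too lossy: summing $|\mathcal L_{ab}|\le\binom{n-3}{k-2}$ over all pairs $\{a,b\}$ gives $\sum_{Z\in\binom{V(H)}{k-1}}\binom{d_H(Z)}{2}\le\binom{n}{2}\binom{n-3}{k-2}$, where $d_H(Z)$ counts edges containing the $(k-1)$-set $Z$; combined with $\sum_Z d_H(Z)=k|H|$ and convexity this only yields $|H|=O\big(\sqrt n\,\binom{n-1}{k-1}\big)$, since the Erd\H{o}s--Ko--Rado bound is essentially tight there only for star-like link families. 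So one is forced to also exploit the conditions for $s\ge2$ and to interleave them with the matching estimates above; getting the various regimes — in particular $k=3$, where the available slack is smallest — to fit together under the single constant $\frac74$ is where the real work lies.
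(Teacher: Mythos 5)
This statement is quoted from Pikhurko and Verstra\"ete \cite{PV}; the paper offers no proof of it, so the only comparison available is with the original source, where it is the main theorem of a full-length article rather than a short lemma. Your opening reduction is correct and is the standard starting point: the forbidden configuration is exactly a ``rectangle'' $X_1\cup Y_1, X_1\cup Y_2, X_2\cup Y_1, X_2\cup Y_2$ with $P,Q,R,T$ as you describe, the size count $|P|=|T|$, $|Q|=|R|$ is right, and the $s=1$ instance does say that each common link $\mathcal L_{ab}$ is intersecting, hence of size at most $\binom{n-3}{k-2}$ by Erd\H{o}s--Ko--Rado. The induction skeleton (delete a vertex of degree at most $\frac74\binom{n-2}{k-2}$ and apply Pascal's identity) is also sound as far as it goes.

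However, what remains is the entire theorem, and you say so yourself: ``the hard part is exactly this last dichotomy.'' The proposal does not close it, and the sketches offered for the large-minimum-degree case do not work as stated. That $H_u$ is dense tells you nothing about edges of $H-u$ having the form $\{b\}\cup f$ with $f\in H_u$; the conclusion that such edges number $o\bigl(\binom{n-1}{k-1}\bigr)$ is asserted without an argument; and ``should allow one to locate a rectangle directly'' is a hope, not a step. Your own computation confirms the difficulty: summing the EKR bounds over pairs and using convexity yields only $|H|=O\bigl(\sqrt n\,\binom{n-1}{k-1}\bigr)$, which is not even a constant-factor bound, so the $s=1$ conditions alone cannot suffice and the $s\ge 2$ conditions must be exploited in a genuinely quantitative way. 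This is precisely where the substance of \cite{PV} lies (and already F\"uredi's earlier argument for the weaker constant $\frac72$ required the $\Delta$-system method). So the proposal is an honest plan with a correct reduction and a correct easy half of an induction, but it is not a proof. One mitigating remark: for every application in the present paper, any absolute constant in place of $\frac74$ (e.g.\ F\"uredi's $\frac72$) would suffice after trivial adjustments, so if your goal were only to make this paper self-contained, proving the weaker constant would be an acceptable, and still nontrivial, target.
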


Now we introduce new hypergraphs $H(k,\ell)$ and $H'(k,\ell)$ which will be useful for proving several claims later.

\begin{definition} \label{H(k,l)}
Let $A,B$ be two disjoint $(k-\ell)$-sets and $Y =\{ u_1,\dots,u_\ell, v_1,\dots, v_\ell\}$. For two nonnegative integer $k,\ell$ with $k>\ell$, we define $H(k,\ell)$ to be the $2k$-vertex $k$-uniform hypergraph on the ground set $A\cup B\cup Y$ satisfying the following,
$$E(H(k,\ell)) = \{ e\cup Z : |e\cap \{u_i,v_i\}|=1 \text{ for all $i\in[\ell]$}, |e|=\ell , Z\in \{A,B\} \}.$$
We call each of $A$ and $B$ a stationary part, and vertices in them stationary vertices. Also we call vertices in $Y$ dynamic vertices and let $V_d(H(k,\ell))$ denote $Y$.
\end{definition}

Note that if $e$ is an edge of $H(k,\ell)$, then there exist indices $i_1,\dots, i_s, j_1,\dots, j_{\ell-s}$ with $\{i_1,\dots, i_s\}\cup \{j_1,\dots, j_{\ell-s}\}=\{1,\dots,\ell\}$ such that
$$e=\{ u_{i_1},\dots, u_{i_s}, v_{j_1},\dots, v_{j_{\ell-s}}\} \cup Z\text{ for }Z\in \{A,B\}.$$ 
Then $e'=\{ v_{i_1},\dots, v_{i_s}, u_{j_1},\dots, u_{j_{\ell-s}}\} \cup Z'$ for $Z' = (A\cup B)\setminus Z$ is also an edge in $H(k,\ell)$. Thus the following holds.
\begin{equation}\label{H(k,l) disjoint}
\begin{minipage}[c]{0.9\textwidth}\em
For an edge $e$ in $H(k,\ell)$, there exists $e'\in H(k,\ell)$ with $e\cap e'=\emptyset$, $e\cup e' = V(H(k,\ell))$.
\end{minipage}
\end{equation}

\begin{definition} \label{H'(k,l)}
Let $A,B,C,D$ be four distinct $(k-\ell)$-sets satisfying $A\cup B = C\cup D$, $A\cap B= C\cap D = \emptyset$, and $Y =\{ u_1,\dots,u_\ell, v_1,\dots, v_\ell\}$. For two nonnegative integers $k,\ell$ with $k>\ell$, we define $H'(k,\ell)$ to be the $2k$-vertex $k$-uniform hypergraph on the ground set $A\cup B\cup Y$ satisfying the following,
$$E(H'(k,\ell)) = \{ e\cup Z : |e\cap \{u_i,v_i\}|=1 \text{ for all $i=1,2,\cdots,\ell$}, |e|=l , Z\in \{A,B,C,D\} \}.$$
We call each of $A,B,C$ and $D$ a stationary part, and vertices in them stationary vertices. Also we call vertices in $Y$ dynamic vertices.
\end{definition}

Note that the following holds.
\begin{equation}\label{H(k,l) regular subgraph}
\begin{minipage}[c]{0.9\textwidth}\em
Hypergraph $H(k,\ell)$ contains $2^{\ell}$ edge-disjoint matchings of size $2$ covering $V(H(k,\ell))$ and $H'(k,\ell)$ contains $2^{\ell+1}$ edge-disjoint matchings of size $2$ covering $V(H'(k,\ell))$.
\end{minipage}
\end{equation}

Indeed, $H(k,\ell)$ is a $k$-uniform hypergraph which resembles the complete $(\ell+1)$-partite $(\ell+1)$-uniform hypergraph with all parts size two. Because of the resemblance, its Turan number is related to the Turan number of $(\ell+1)$-partite $(\ell+1)$-graph. 
The lemma below is proved by Erd\H{o}s, and we use it to bound the Turan number of $H(k,\ell)$.

\begin{lemma}\cite{Erdos}\label{Erd}
Let $S$ be a set of $N$ elements $y_1,y_2,\cdots, y_N$ and let $A_i$ for $1\leq i\leq n$ be subsets of $S$.  If $\sum_{i=1}^{n} |A_i| \geq \frac{nN}{w}$ for some $w$ and $n\geq 8 w^2$, then there are $2$ distinct $A_{i_1}, A_{i_{2}}$ so that 
$$ |A_{i_1}\cap A_{i_2}| \geq \frac{N}{2w^2} .$$
\end{lemma}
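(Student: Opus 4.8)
The plan is to prove Lemma~\ref{Erd} by a standard double counting argument combined with convexity of the binomial coefficient. First I would pass to the incidence picture between elements and sets: for each element $y \in S$ let $d_y := |\{i \in [n] : y \in A_i\}|$ denote its degree. Summing degrees over $S$ re-counts the element--set incidences, so $\sum_{y \in S} d_y = \sum_{i=1}^{n} |A_i| \geq \frac{nN}{w}$; in particular the average degree $\bar d := \frac{1}{N}\sum_{y\in S} d_y$ satisfies $\bar d \geq \frac{n}{w}$.

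The key identity is obtained by counting, over each unordered pair $\{i,j\}$ of distinct indices, the elements lying in both $A_i$ and $A_j$:
\[
\sum_{1 \le i < j \le n} |A_i \cap A_j| \;=\; \sum_{y \in S} \binom{d_y}{2}.
\]
Since $x \mapsto \binom{x}{2} = \tfrac{1}{2}x(x-1)$ is convex on $\mathbb{R}$, Jensen's inequality gives $\sum_{y \in S}\binom{d_y}{2} \ge N\binom{\bar d}{2}$; and because $\tfrac12 x(x-1)$ is increasing for $x > \tfrac12$ while $\bar d \ge n/w > \tfrac12$, I may replace $\bar d$ by the smaller quantity $n/w$ to get $\sum_{i<j} |A_i \cap A_j| \ge N\binom{n/w}{2}$. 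Averaging over the $\binom{n}{2}$ pairs, there are distinct indices $i_1, i_2$ with
\[
|A_{i_1} \cap A_{i_2}| \;\ge\; \frac{N\binom{n/w}{2}}{\binom{n}{2}} \;=\; \frac{N(n-w)}{w^2(n-1)} .
\]
Finally, $\frac{n-w}{n-1} \ge \frac12$ is equivalent to $n \ge 2w-1$, which certainly holds under the hypothesis $n \ge 8w^2$, and this yields $|A_{i_1}\cap A_{i_2}| \ge \frac{N}{2w^2}$, as desired.

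I do not expect any genuine obstacle; the computation is routine once the double count and the convexity step are in place. The only points needing a word of care are that Jensen is applied to a convex function evaluated at a possibly non-integer mean, and that one should first note $\binom{x}{2}$ is monotone on the relevant range before substituting $\bar d \ge n/w$. It is also worth remarking that the hypothesis $\sum_i |A_i| \ge nN/w$ is vacuous unless $w \ge 1$ (as $\sum_i |A_i| \le nN$ always), so one may assume $w \ge 1$, which is what keeps $n/w > 1$ and $\binom{n/w}{2}$ positive. The gap between the bound actually needed, $n \ge 2w-1$, and the stated hypothesis $n \ge 8w^2$ merely reflects that the lemma is quoted with constants convenient for its later applications.
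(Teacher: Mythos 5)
Your argument is correct: the incidence double count $\sum_{i<j}|A_i\cap A_j|=\sum_{y\in S}\binom{d_y}{2}$, Jensen's inequality, monotonicity of $\binom{x}{2}$ on the relevant range, and averaging over $\binom{n}{2}$ pairs do give a pair with intersection at least $\frac{N(n-w)}{w^2(n-1)}\ge\frac{N}{2w^2}$ once $n\ge 2w-1$, and your remarks handling $w\ge 1$ and the non-integer mean are the right points of care. The paper itself gives no proof of Lemma~\ref{Erd} (it is quoted from Erd\H{o}s), and your proof is the classical double-counting/convexity argument behind that result, so nothing further is needed; the stronger hypothesis $n\ge 8w^2$ is indeed only a convenience of the cited formulation.
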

\begin{corollary}\label{link graph}
Let $w,n$ be numbers satisfying $n \geq 8w^2$. 
If $H$ is a $k$-uniform hypergraph with $|H|\geq \frac{1}{kw}n\binom{n}{k-1}$, then there are two vertices $x,x'$ such that $|E(H_x)\cap E(H_{x'})|\geq \frac{1}{2w^2}\binom{n}{k-1}$ where $H_x, H_{x'}$ are link graph of $x$ and $x'$ in $H$, respectively.
\end{corollary}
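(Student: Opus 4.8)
The plan is to deduce this directly from Lemma \ref{Erd} by a standard double‑counting of degrees. Take the ground set $S$ in Lemma \ref{Erd} to be the collection of all $(k-1)$-subsets of $V(H)$, so that $N := |S| = \binom{n}{k-1}$. For each vertex $x \in V(H)$ put $A_x := E(H_x) \subseteq S$, i.e.\ $A_x$ is the family of $(k-1)$-sets $f$ with $f \cup \{x\} \in E(H)$. There are exactly $n$ such sets $A_x$, one per vertex, so the parameter called ``$n$'' in Lemma \ref{Erd} is our $n$, and the hypothesis $n \ge 8w^2$ is precisely the requirement of that lemma.

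The one computation to carry out is
\[
\sum_{x \in V(H)} |A_x| \;=\; \sum_{x \in V(H)} d_H(x) \;=\; k\,|H|,
\]
since every edge of $H$ contributes to the degree of exactly $k$ vertices. Combining this with the assumption $|H| \ge \frac{1}{kw}\, n\binom{n}{k-1}$ gives $\sum_x |A_x| \ge \frac{1}{w}\, n\binom{n}{k-1} = \frac{nN}{w}$, so Lemma \ref{Erd} (with this $w$) applies and produces two distinct indices, i.e.\ two distinct vertices $x, x'$, with
\[
|E(H_x) \cap E(H_{x'})| \;=\; |A_x \cap A_{x'}| \;\ge\; \frac{N}{2w^2} \;=\; \frac{1}{2w^2}\binom{n}{k-1},
\]
which is the claim.

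There is essentially no obstacle; the only thing that needs care is correctly matching the roles of the two quantities in Lemma \ref{Erd} — the \emph{number} of sets $A_x$ is $n$ (hence the relevant size constraint $n \ge 8w^2$), while the \emph{size} of the ambient ground set, against which the lower bound $\sum |A_x| \ge nN/w$ is measured, is $N = \binom{n}{k-1}$. Once the link graphs $H_x$ are viewed as subsets of $\binom{V(H)}{k-1}$ and the degree sum is written as $k|H|$, the corollary is immediate.
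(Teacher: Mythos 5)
Your proposal is correct and follows essentially the same route as the paper: both take $S=\binom{V(H)}{k-1}$, set $A_x=E(H_x)$, compute $\sum_x|A_x|=k|H|\geq \frac{n}{w}\binom{n}{k-1}$, and apply Lemma~\ref{Erd}. Your remark about which quantity plays the role of the number of sets ($n$) versus the ground-set size ($N=\binom{n}{k-1}$) matches the paper's application of the lemma exactly.
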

\begin{proof}
Let $x_1,\dots, x_n$ be the vertices of $H$. Let $A_i=E(H_{x_i})$ be the $(k-1)$-uniform hypergraph, which is the link graph of $x_i$ in $H$. Since each $A_i$ is $(k-1)$-uniform hypergraphs, $A_i$ is a subset of $\binom{V(H)}{k-1}$. Since $\sum_{i=1}^{n}|A_i| = \sum_{i=1}^{n} d_{H}(x_i) = k|E(H)| = \frac{n\binom{n}{k-1}}{w}$, we apply Lemma~\ref{Erd} with $A_i, \binom{V(H)}{k-1}, w$ playing the role of $A_i, S, w$, respectively. Then we obtain there exist $x_{i}, x_{i'}$ with 
$|E(H_{x_i})\cap E(H_{x_{i'}})|= |A_i\cap A_{i'}|\geq \frac{1}{2w^2}\binom{n}{k-1}.$
\end{proof}
\begin{proposition} \label{ext(H(k,l))} 
Let $k,\ell \geq 0$ be integers where $k>\ell$.
Then for $n>2k$, any $k$-uniform hypergraph $H$ with $2n^{k-2^{-\ell}}$ edges contains a copy of $H(k,\ell)$ as a subgraph. Moreover, if $k\geq \ell+3$, then it also contains a copy of $H'(k,\ell)$.
\end{proposition}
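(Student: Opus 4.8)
The plan is to prove the statement by induction on $\ell$, using a ``pair of twin vertices'' step at each stage, exactly in the spirit of the sunflower/link-graph arguments already set up. For the base case $\ell=0$ the hypergraph $H(k,0)$ is just two disjoint $k$-sets, i.e.\ a matching of size $2$; since $2n^{k-1}\ge 2\binom{n-1}{k-1}$ for $n>2k$, Observation~\ref{matching} (or Erd\H{o}s--Ko--Rado directly) gives a matching of size $2$, and $H'(k,0)$ (four $k$-sets $A,B,C,D$ with $A\cup B=C\cup D$, $A\cap B=C\cap D=\emptyset$) is exactly the configuration produced by Theorem~\ref{generalized C4}, whose hypothesis $\frac74\binom{n-1}{k-1}$ is comfortably met. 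So the base cases hold.

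For the inductive step, suppose $H$ has $|H|\ge 2n^{k-2^{-\ell}}$ edges with $\ell\ge 1$. I would first apply Corollary~\ref{link graph} with a suitable choice of $w$: since $|H|\ge 2n^{k-2^{-\ell}} = \frac{1}{kw}\cdot n\binom{n}{k-1}$ roughly when $w\approx \tfrac{1}{2k}n^{2^{-\ell}}\big/\binom{n}{k-1}\cdot n^{k-1}$, more carefully I want $w$ of order $n^{2^{-\ell}}$ up to polynomial-in-$n$ factors of lower order, so that the resulting bound $\frac{1}{2w^2}\binom{n}{k-1}$ on the common link is of order $n^{k-1-2^{-\ell+1}} = n^{(k-1) - 2^{-(\ell-1)}}$. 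Concretely: choose the two vertices $x,x'$ given by Corollary~\ref{link graph} so that the $(k-1)$-uniform hypergraph $G := H_x \cap H_{x'}$ (meaning the hypergraph on $V(H)\setminus\{x,x'\}$ whose edges are the common link edges avoiding both $x$ and $x'$) has $|G|\ge 2n^{(k-1)-2^{-(\ell-1)}}$ edges; the arithmetic here is routine provided $n$ is large compared to $k$, and one should discard the $O(\binom{n}{k-2})$ link edges that contain $x$ or $x'$, which is negligible. Now $G$ is a $(k-1)$-uniform hypergraph on $n-2>2(k-1)$ vertices with enough edges, so by the induction hypothesis $G$ contains a copy of $H(k-1,\ell-1)$ (and, when $k-1\ge (\ell-1)+3$, also a copy of $H'(k-1,\ell-1)$). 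Setting $u_\ell := x$, $v_\ell := x'$ and appending the pair $\{u_\ell,v_\ell\}$ to the dynamic vertices of this copy, while keeping its stationary parts $A,B$ (resp.\ $A,B,C,D$) as the stationary parts of the new configuration, yields a copy of $H(k,\ell)$ (resp.\ $H'(k,\ell)$) inside $H$: every edge $e\cup Z$ of $H(k-1,\ell-1)$ with $Z\in\{A,B\}$ corresponds to the two edges $e\cup\{u_\ell\}\cup Z\cup\{x\}$... more precisely $e\cup Z$ lifts to $(e\cup Z)\cup\{x\}$ and $(e\cup Z)\cup\{x'\}$, both of which are edges of $H$ since $e\cup Z\in E(H_x)\cap E(H_{x'})$; and these are exactly the edges prescribed by Definition~\ref{H(k,l)} once we rename so that $x,x'$ play the role of the $\ell$-th dynamic pair. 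One must check the disjointness bookkeeping — $x,x'\notin A\cup B$ and $x,x'$ are not among the other dynamic vertices — but that is automatic because $G$ lives on $V(H)\setminus\{x,x'\}$.

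The main obstacle I expect is not the structural/combinatorial part but getting the exponent arithmetic to close cleanly: one needs $2n^{k-2^{-\ell}}$ edges in $H$ to force, after removing two vertices and passing to a common link, at least $2n^{(k-1)-2^{-(\ell-1)}} = 2n^{(k-1)-2^{-\ell+1}}$ edges in $G$, and the doubling $2^{-\ell}\mapsto 2^{-\ell+1}$ in the exponent deficit is exactly what makes the induction work — but one has to absorb the $\binom{n}{k-1}$ versus $n^{k-1}$ discrepancy, the factor $k$, the constant $2$, the loss of two vertices, and the $\frac{1}{2w^2}$ factor, all into lower-order terms, which forces $n$ to be large in terms of $k$ (consistent with the hypothesis $n>2k$ being supplemented implicitly by ``$n$ large''). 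The secondary point to be careful about is the side condition for $H'$: the induction passes $k\ge\ell+3$ down to $k-1\ge(\ell-1)+3$, i.e.\ $k\ge\ell+3$ again, so the condition is preserved and the base case $H'(k,0)$ via Theorem~\ref{generalized C4} only needs $k\ge 3$, which is implied. I would also double-check that Corollary~\ref{link graph}'s hypothesis $n\ge 8w^2$ holds for the chosen $w\sim n^{2^{-\ell}}$, which it does since $2^{-\ell}\le \tfrac12$ gives $w^2\lesssim n$ up to lower-order factors and $n$ is large.
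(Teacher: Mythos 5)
Your proposal is correct and follows essentially the same route as the paper: induction on $\ell$, base cases $H(k,0)$ via Observation \ref{matching} and $H'(k,0)$ via Theorem \ref{generalized C4}, and the inductive step via Corollary \ref{link graph} to find two vertices $x,x'$ whose common link is a dense $(k-1)$-uniform hypergraph containing $H(k-1,\ell-1)$ (resp.\ $H'(k-1,\ell-1)$), which lifts by adding $\{x,x'\}$ as the new dynamic pair. Your worry about needing $n$ large beyond $n>2k$ is unnecessary: taking $w=n^{2^{-\ell}}/(2(k-1)!)$ as in the paper, the bound $\frac{1}{2w^2}\binom{n}{k-1}=2((k-1)!)^2 n^{-2^{-\ell+1}}\binom{n}{k-1}>2n^{k-1-2^{-\ell+1}}$ holds for every $n>2k$ because $((k-1)!)^2\binom{n}{k-1}\geq (k-1)^{k-1}\left(\tfrac{n}{k-1}\right)^{k-1}=n^{k-1}$, and $n\geq 8w^2$ is automatic since $2^{-\ell+1}\leq 1$.
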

\begin{proof}
We use induction on $\ell$. For $\ell=0$, assume we have an $n$-vertex $k$-uniform hypergraph $H$ with $2n^{k-1}$ edges. By Observation \ref{matching} and the fact $n^{k-1} > {{n-1}\choose {k-1}}$, we get $H(k,0)$ which is a matching of size two for any $k$. If $k\geq 3$ and $\ell=0$, then Theorem \ref{generalized C4} implies that $H$ contains $H'(k,\ell)$, which consists of two pairs of disjoint edges with the same union. For $k= 2, \ell=1$, Turan number for the cycle of length $4$ gives us the conclusion about $H(k,\ell)$. 

Assume now that every $n$-vertex $k$-uniform hypergraph with $2n^{k-2^{-\ell+1}}$ edges contains a copy of $H(k,\ell-1)$ for $n> 2(k-1)$ and $k\geq 3, \ell\geq 1$. If an $n$-vertex $k$-uniform hypergraph $H$ with $n>2k$ contains at least $2 n^{k-2^{-\ell}} \geq \frac{2(k-1)!}{n^{2^{-\ell}}} n\binom{n}{k-1}$ edges, Corollary~\ref{link graph} implies that there are two vertices $x,x'\in V(H)$ with 
$$|E(H_x)\cap E(H_{x'})| \geq \frac{1}{2}(2(k-1)!n^{-2^{-\ell}})^2\binom{n}{k-1} > 2 n^{k-1-2^{-\ell+1}}.$$
By induction hypothesis, $(k-1)$-uniform hypergraph $E(H_x)\cap E(H_{x'})$ contains $H'$, a copy of $H(k-1,\ell-1)$. Then 
$$\{ \{z\} \cup e : e\in E(H'), z\in \{x, x'\} \}$$ forms a copy of $H(k,\ell)$. Thus $H$ must contain a copy of $H(k,l)$. We get the conclusion for $H'(k,\ell)$ by the same logic.\end{proof}

\section{Approximate size of $H$} \label{section asymptotic}

In this section, we prove the following Theorem \ref{asymptotic} by showing that most of the edges in $H$ contain only one vertex of high degree. Note that we only consider the case when $r\geq 3$ because the case of $r=2$ is already done in \cite{MV2009}.
We let $\ell:=\lceil\log{r} \rceil$ and let $\alpha$ be a number which we decide later such that $0<\alpha \leq 1/2$, and we let
\begin{align}\label{def D}
D:= n^{k-1-\alpha}(\log\log{n})^{\frac{1}{4(k-1)}}.
\end{align}
We let $T$ denote the set of vertices of $H$ of degree at least $D$ and set $t:=|T|$. Since $tD \leq k |H|$, 
\begin{equation}\label{size of t} t\leq D^{-1} k |H|. \end{equation}
 We also define
$ H_i := \{e \in H : |e\cap T| = i\} \text{ for } i\leq k, $ and
$ G:=\{e\in H_1 : \nexists f \in H_1 \text{ such that } e\setminus T = f\setminus T\}.$ Then, it is obvious that $|G|\leq {{n-1}\choose{k-1}}$. Note that $r+1$ is always at least $2^{\ell-1}+2$.
\begin{theorem} \label{asymptotic}
For integer $k,r,\ell$ with $k>r\geq 3$, $\ell=\lceil \log{r}\rceil$, there exists an integer $n_k$ such that for $n\geq n_k$ any $n$-vertex $k$-uniform hypergraph $H$ with no $r$-regular subgraphs satisfies the following.
\begin{align*}
\text{If }k> 2^{\ell-1}+2,\text{ then } & |H| \leq {{n-1}\choose {k-1}} + 3n^{k-1-\frac{1}{2r^2-2}}. \\
\text{If }k= 2^{\ell-1}+2,\text{ then } & |H| \leq {{n-1}\choose {k-1}} + 3n^{k-1}(\log\log{n})^{-\frac{1}{4(k-1)}}.
\end{align*}
\end{theorem}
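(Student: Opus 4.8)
The plan is to show that, apart from at most $\binom{n-1}{k-1}+o\!\left(\binom{n-1}{k-1}\right)$ edges, every edge of $H$ meets the (small) set $T$ of vertices of degree $\ge D$ in exactly one vertex and has a ``private'' base, and to control all the exceptional edges via the hypergraphs $H'(k,\ell)$ together with Lemma~\ref{lemma 1}. Since $\sum_{v\in T}d_H(v)\le k|H|$ we have $tD\le k|H|$ with $t:=|T|$, so once $|H|=O\!\left(\binom{n-1}{k-1}\right)$ we get $t=O(n^{\alpha}/(k-1)!)$, which is the source of all savings. Split $E(H)=H_0\cup H_1\cup\bigcup_{i\ge2}H_i$ according to $|e\cap T|$ and write $H_1=G\cup(H_1\setminus G)$. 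The map $e\mapsto e\setminus T$ is injective on $G$, so $|G|\le\binom{n-t}{k-1}\le\binom{n-1}{k-1}$; it remains to bound $|H_0|$, $|H_1\setminus G|$ and $\sum_{i\ge2}|H_i|$, each by roughly $n^{k-1-\frac1{2r^2-2}}$ (or $n^{k-1}(\log\log n)^{-\frac1{4(k-1)}}$ in the boundary case).

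For $H_0$: it has no $r$-regular subgraph and maximum degree $<D$, so either $|H_0|<c(r)Dk$, or Lemma~\ref{lemma 1} gives $|H_0|\le 6n^{k/(k-1)}D^{(k-2)/(k-1)}\big(\log\log(|H_0|/(kD))\big)^{-1/(2(k-1))}$. Plugging in $D=n^{k-1-\alpha}(\log\log n)^{1/(4(k-1))}$, the $n$-exponent of the second bound equals $(k-1)+\tfrac{1-\alpha(k-2)}{k-1}$, which drops to $(k-1)-\tfrac1{2r^2-2}$ once $\alpha$ is large enough (and lands on $k-1$ with a helpful $\log\log$ factor in the boundary case). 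For the high-intersection edges one has the crude count $\sum_{i\ge2}|H_i|\le t^2\binom{n-2}{k-2}$, which is small because $t$ is.

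The heart is $H_1\setminus G$. Writing $T_S=\{v\in T: S\cup\{v\}\in E(H)\}$ for a $(k-1)$-set $S$ disjoint from $T$, we get $H_1\setminus G=\bigcup_{|T_S|\ge2}\{S\cup\{v\}:v\in T_S\}$, so $|H_1\setminus G|=\sum_{|T_S|\ge2}|T_S|\le 2\sum_{\{v,v'\}\subseteq T}\big|E(H_v)\cap E(H_{v'})\cap\binom{V\setminus T}{k-1}\big|$, using $j\le 2\binom j2$. I claim each inner term is $<2n^{(k-1)-2^{-\ell+2}}$. If not, then since $k-1\ge \ell+1=(\ell-2)+3$ (valid because $r\ge3$ forces $\ell\ge2$, hence $r\ge\ell+1$ and $k\ge r+1\ge\ell+2$), Proposition~\ref{ext(H(k,l))} produces a copy of $H'(k-1,\ell-2)$ inside that $(k-1)$-uniform hypergraph; adjoining $v$ and $v'$ as the $(\ell-1)$-st dynamic pair turns it into a copy of $H'(k,\ell-1)$ in $H$, which by \eqref{H(k,l) regular subgraph} contains $2^{\ell}\ge r$ edge-disjoint matchings of size $2$ covering its vertex set, hence an $r$-regular subgraph — a contradiction. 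Therefore $|H_1\setminus G|\le 2t^2n^{(k-1)-2^{-\ell+2}}$.

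Combining, $|H|\le\binom{n-1}{k-1}+|H_0|+2t^2n^{(k-1)-2^{-\ell+2}}+t^2\binom{n-2}{k-2}$, and $t\le D^{-1}k|H|$ turns this into a self-improving inequality $|H|\le A+c|H|^2$ with $A=\binom{n-1}{k-1}+|H_0|$ and $c=O\!\big(k^2D^{-2}n^{(k-1)-2^{-\ell+2}}\big)$; seeding the bootstrap from the crude bound $|H|=O(n^{k-1+1/(k-1)})$ of Lemma~\ref{lemma 1}, this is expected to force $|H|\le 2A$, after which one re-runs the estimates with the improved bound on $t$. It then remains to choose $\alpha\in(0,\tfrac12]$: for $k>2^{\ell-1}+2$ there is a nonempty window for $\alpha$ whose endpoints tend, as $k\to\infty$, to $\tfrac1{2r^2-2}$ and $\tfrac12\!\left(2^{-\ell+2}-\tfrac1{2r^2-2}\right)$, and nonemptiness of this window is precisely where $r\ge3$ is used; for $k=2^{\ell-1}+2$ (equivalently $k=r+1$ with $r=2^{\ell-1}+1$) the window collapses to $\alpha=\tfrac1{k-2}$, at which the $H_0$- and $(H_1\setminus G)$-exponents both equal $k-1$ and the factor $(\log\log n)^{1/(4(k-1))}$ built into $D$ is exactly what converts the bounds into the claimed $(\log\log n)^{-1/(4(k-1))}$ form. \emph{The hardest step is this last bookkeeping in the boundary case}, where there is essentially no slack: one must check that $\log\log(|H_0|/(kD))$ is of order $\log\log n$ (which holds outside the negligible range $|H_0|/(kD)<n^{\alpha/2}$), that the self-improving inequality can indeed be started below $1/c$, and that the absolute constants generated along the way are absorbed by the slowly-growing $\log\log$ factor once $n\ge n_k$, so that the three exceptional pieces sum to $(1+1+1)\,n^{k-1}(\log\log n)^{-1/(4(k-1))}$ as claimed; this forces $n_k$ to be enormous (super-exponential in $k$), consistently with the statement.
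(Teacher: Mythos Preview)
Your decomposition $H=G\cup(H_1\setminus G)\cup H_0\cup\bigcup_{i\ge2}H_i$, the bound on $|H_0|$ via Lemma~\ref{lemma 1}, the bound on $|H_1\setminus G|$ by finding a copy of $H'(k-1,\ell-2)$ in $E(H_v)\cap E(H_{v'})$ and promoting it to $H'(k,\ell-1)$, and the final balancing of exponents are exactly the paper's argument. The one genuine difference is how you obtain $t\le n^{\alpha}$: the paper argues by contradiction (assume a counterexample, delete edges until $|H|$ is one more than the claimed bound, so that $|H|\le n^{k-1}/k$ for free and hence $t\le kD^{-1}|H|\le n^{\alpha}$), whereas you try to bootstrap from the crude bound $|H|=O(n^{k-1+1/(k-1)})$ via the quadratic inequality $|H|\le A+c|H|^2$.

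This bootstrap is where your proof has a real gap, and not only in the boundary case. To iterate $|H|\le A+c|H|^2$ down to $|H|\le 2A$ you need the seed to lie below the larger root, i.e.\ roughly $|H|<1/c$. With $c\asymp k^2D^{-2}n^{(k-1)-2^{-\ell+2}}$ and $D\asymp n^{k-1-\alpha}$ this means $n^{k-1+1/(k-1)}<n^{(k-1)-2\alpha+2^{-\ell+2}}$, i.e.\ $\alpha<2^{-\ell+1}-\tfrac{1}{2(k-1)}$. On the other hand the $H_0$-bound via Lemma~\ref{lemma 1} forces $\alpha\ge\bigl(1+\tfrac{k-1}{2r^2-2}\bigr)/(k-2)$. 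For small $k$ these two windows are disjoint: already at $r=3$, $\ell=2$, $k=5$ the first gives $\alpha<3/8$ while the second gives $\alpha\ge 5/12$, and in the boundary case $k=2^{\ell-1}+2$ the $H_0$-constraint pins $\alpha=1/(k-2)=2^{-\ell+1}$, which violates the bootstrap condition outright. So the self-improving inequality cannot be started in the range of $k$ you need, and you never get $t\le n^{\alpha}$. The fix is precisely the paper's move: pass to a minimal counterexample, which hands you $|H|\le 2\binom{n-1}{k-1}$ and hence the required bound on $t$ without any iteration. (A minor side remark: the place $r\ge3$ is actually used is that $\ell\ge2$, so that $H'(k-1,\ell-2)$ makes sense; the $\alpha$-window is nonempty for $r=2$ as well.)
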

\begin{proof}
First we suppose the conclusion does not hold. We may assume that we have a counterexample $H$ such that $|H|$ is one more than the stated upper bound by deleting some edges if necessary and assume $n$ is large enough. 
Since $n$ is large enough, $|H|\leq n^{k-1}/k$ and (\ref{size of t}) imply 
\begin{align} \label{t size}
t \leq D^{-1}k|H|\leq n^{\alpha}(\log\log{n})^{-\frac{1}{4(k-1)} }.
\end{align} 

\begin{claim} \label{asymptotic claim}
$$|H_0| \leq \max\left\{ n^{k-1-\alpha + \frac{1}{2k^2}},  n^{k-1 + \frac{1- (k-2)\alpha}{k-1} }(\log\log{n})^{-\frac{1}{4(k-1)}}\right\}.$$
$$|H\setminus (H_0\cup H_1)| \leq n^{k-2+2\alpha}(\log\log{n})^{-\frac{1}{2(k-1)}}.$$
\end{claim}
\begin{proof}
First, we estimate $|H_0|$. Since edges in $H_0$ do not intersect $T$, the maximum degree of $H_0$ is less than $D$. We apply Lemma \ref{lemma 1} to $H_0$, then we get 
$$|H_0| \leq \max \left\{ c(r)k\Delta(H_0) ,  \frac{6n^{k/(k-1)}D^{(k-2)/(k-1)}}{(\log\log{\frac{|H|}{k D}})^{\frac{1}{2(k-1)}}} \right\}$$
where $c(r)$ is the constant from Lemma \ref{lemma 1}.
Since $n$ is large enough, 
$$c(r)k\Delta(H_0)  \leq c(r)k D = c(r)k  n^{k-1-\alpha}(\log\log{n})^{\frac{1}{4(k-1)}} \leq n^{k-1-\alpha+ \frac{1}{2k^2}}.$$ 
Also since $n$ is large, $\frac{|H|}{kD}\geq {{n-1}\choose{k-1}} ( k n^{k-1-\alpha}(\log\log{n})^{\frac{1}{4(k-1)}})^{-1} \geq n^{\alpha/2}$ holds. Thus for large enough $n$,

\begin{align*} 
\frac{6n^{k/(k-1)}D^{(k-2)/(k-1)}}{(\log\log{\frac{|H|}{k D}})^{\frac{1}{2(k-1)}}} \leq \frac{6n^{k/(k-1)}D^{(k-2)/(k-1)}}{(\log\log{ n^{\alpha/2}})^{\frac{1}{2(k-1)}}} 
\stackrel{(\ref{def D})}{\leq} n^{k-1 + \frac{1- (k-2)\alpha}{k-1} }(\log\log{n})^{-\frac{1}{4(k-1)}}.
\end{align*}

Last, every edge in $H\setminus (H_0\cup H_1)$ contains two vertices of $T$ and $k-2$ vertices of $V(H)$. By (\ref{t size}),
 $$|H\setminus (H_0\cup H_1)| \leq {{|T|}\choose 2}n^{k-2} \stackrel{\eqref{t size}}{\leq} n^{k-2+2\alpha}(\log\log{n})^{-\frac{1}{2(k-1)}}.$$ \end{proof}

\begin{claim} \label{H_1}
$$|H_1| \leq |G| +  n^{k-1- 2^{-\ell+2}+2\alpha}(\log\log{n})^{-\frac{1}{2(k-1)}}.$$
\end{claim}
\begin{proof}
Note that $k\geq 2^{\ell-1}+2\geq l+2$. We consider $H_1\setminus G$. For an edge $e$ in $H_1\setminus G$ it satisfies $|e\cap (V(H)\setminus T)|=k-1$ and a $(k-1)$-set $e\setminus T$ lies in at least two edges of $H$.
For each pair $\{u,u'\} \subseteq T$, we consider the $(k-1)$-uniform hypergraph 
$$H^{\{u,u'\}} = \{ e' : e'\cup \{u\} \in E(H_1\setminus G), e'\cup \{u'\} \in E(H_1\setminus G), e'\subseteq V(H)\setminus T \}.$$ By the definition of $G$, every edge in $H_1\setminus G$ belongs to $H^{\{u,u'\}}$ for at least one pair $\{u,u'\}$. However, if $H^{\{u,u'\}}$ contains a copy of $H'(k-1,\ell-2)$, then the copy together with $u,u'$ form a copy of $H'(k,\ell-1)$ in $H$, which gives us an $r$-regular subgraph of $H$. Thus Proposition \ref{ext(H(k,l))} and the fact that $k-1= 2^{\ell-1}+1\geq \ell-2+3$ for $\ell\geq 2$ imply $|H^{\{u,u'\}}|\leq 2 n^{k-1-2^{-\ell+2}}$.
Thus we get $$|H_1\setminus G|\leq \sum_{ \{u,u'\} \in {{T}\choose{2}}} |H^{\{u,u'\}}| \leq {{|T|}\choose 2} 2 n^{k-1-2^{-\ell+2}} \stackrel{(\ref{t size})}{\leq} n^{k-1-2^{-\ell+2}+2\alpha }(\log\log{n})^{-\frac{1}{2(k-1)}}.$$ \end{proof}

\noindent If $k> 2^{\ell-1}+2$, we choose $\alpha=\frac{1}{2(k-2)}+ 2^{-\ell}$, then we get $$\max\{k-1-\alpha + \frac{1}{2k^2},~ k-1+ \frac{1-(k-2)\alpha}{k-1},~ k-1-2^{-\ell+2}+2\alpha,~ k-2+2\alpha \} \leq k-1 - \frac{1}{ 2r^2-2}.$$
Hence, $|H\setminus G| = |H_0|+|H_1\setminus G| + |H\setminus (H_0\cup H_1)| \leq 3n^{k-1-\frac{1}{2r^2-2}}.$
We conclude that for large enough $n$, $$|H| \leq {{n-1}\choose {k-1}} + 3 n^{k-1-\frac{1}{2r^2-2}}.$$
\noindent If $k= 2^{\ell-1}+2$, then we choose $\alpha=2^{-\ell+1} = \frac{1}{k-2}$, then we get 
 $|H\setminus G| = |H_0|+|H_1\setminus G| + |H\setminus (H_0\cup H_1)| \leq 3 n^{k-1}(\log\log{n})^{-\frac{1}{4(k-1)}}$ and we conclude that for large enough $n$, $$|H| \leq {{n-1}\choose {k-1}} + 3 n^{k-1}(\log\log{n})^{-\frac{1}{4(k-1)}}$$
This contradicts our initial assumption. Therefore, the Theorem holds.
 \end{proof}

\begin{remark}\label{alpha}
If $k \geq 2^{\ell}+3$, then we may choose $\alpha:= \frac{3\cdot 2^{\ell}+4}{2^{\ell}(3\cdot 2^{\ell}+5)}, D:= n^{k-1-\alpha}$ and go through the argument above. Then we can conclude $|H\setminus G| \leq 3 n^{k-1-\frac{1}{2^{\ell-1}(3\cdot 2^{\ell}+5)}}$ for any $n$-vertex $k$-uniform hypergraph $H$ with no $r$-regular subgraphs when $n$ is large enough. In order to get Theorem \ref{stability}, we assume $$\alpha:= \frac{3\cdot 2^{\ell}+4}{2^{\ell}(3\cdot 2^{\ell}+5)},\enspace D:= n^{k-1-\alpha},\enspace \ell:=\lceil\log{r} \rceil$$ throughout the paper.
\end{remark}

\section{ Asymptotic structure of $H$}\label{section stability}
In this section, we want to show that the asymptotic structure of $H$ is close to a full $k$-star. We let $G$ be as we define in the previous section, and $\alpha= \frac{3\cdot 2^{\ell}+4}{2^{\ell}(3\cdot 2^{\ell}+5)}, D= n^{k-1-\alpha}, \ell=\lceil\log{r} \rceil$ as in Remark \ref{alpha} and $T$ denote the set of vertices of $H$ of degree at least $D$. Then we still have (\ref{t size}). We also define
$$ G':=\{e\setminus T : e\in G\}.$$
By definition of $G$, we have 
\begin{align}\label{G,G'}
|G|=|G'|, \enspace E(G') = \bigcup_{x\in T} E(G_x).
\end{align}
where $G_x$ is the link graph of $x$ in $G$ (i.e. $G_x =\{e\setminus \{x\} : x\in e, e\in G \}$).
In order to prove Theorem \ref{stability}, we count the copies of $H(k-1,\ell+1)$ in $G'$ and show that there exists a vertex $v$ such that almost all copies of $H(k-1,\ell+1)$ consist of $(k-1)$-sets in $G_v$. We define $\beta := k^{4k} n^{-\frac{1}{2^{\ell-1}(3\cdot 2^{\ell}+5)}}$ and use it throughout the paper.

\begin{theorem} \label{stability}
For integers $k,r,\ell$ with $\ell=\lceil \log{r} \rceil, k\geq 2^\ell+3$, there exists an integer $n_k$ such that the following holds.
If $n\geq n_k$ and $H$ is an $n$-vertex $k$-uniform hypergraph $H$ with no $r$-regular subgraphs such that $|H| \geq {{n-1}\choose {k-1}} - \beta n^{k-1}/k^{4k}$, then there exists a vertex $v$ in $H$ such that $$|G_{v}| \geq (1- \beta)|G'|$$ with $\beta =  k^{4k} n^{-\frac{1}{2^{\ell-1}(3\cdot 2^{\ell}+5)}}.$
\end{theorem}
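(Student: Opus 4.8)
The plan is to feed Theorem~\ref{asymptotic} (sharpened as in Remark~\ref{alpha}) into a counting argument over copies of $H(k-1,\ell+1)$ in $G'$. First I would record that, by Remark~\ref{alpha}, $|H\setminus G|\le 3n^{k-1-\gamma}$ with $\gamma:=\tfrac{1}{2^{\ell-1}(3\cdot 2^{\ell}+5)}$, so, since $\beta n^{k-1}/k^{4k}=n^{k-1-\gamma}$, the hypothesis gives $|G'|=|G|\ge\binom{n-1}{k-1}-4n^{k-1-\gamma}$; in particular $T\ne\emptyset$, and as $\binom{n-t}{k-1}\le\binom{n-1}{k-1}$ at most $4n^{k-1-\gamma}$ of the $(k-1)$-subsets of $V(H)\setminus T$ are absent from $G'$, i.e.\ $G'$ is an almost complete $(k-1)$-graph on $V(H)\setminus T$. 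By \eqref{G,G'} the edge sets $E(G_x)$ $(x\in T)$ partition $E(G')$; let $\chi\colon E(G')\to T$ be the resulting colouring, so $|G_x|=|\chi^{-1}(x)|$, and recall $t=|T|\le n^{\alpha}$ from \eqref{t size}.

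The next and crucial observation is a forbidden configuration. Writing $\cK:=H(k-1,\ell+1)$ with stationary parts $A,B$ (and $2^{\ell+2}$ edges), call a copy $F$ of $\cK$ in $G'$ \emph{forbidden} if $\chi$ is constant with value $y_1$ on the $A$-edges of $F$, constant with value $y_2$ on its $B$-edges, and $y_1\ne y_2$. If such an $F$ existed, then replacing each $A$-edge $e$ of $F$ by $e\cup\{y_1\}\in E(G)$ and each $B$-edge $e$ by $e\cup\{y_2\}\in E(G)$ would produce a copy of $H(k,\ell+1)$ inside $H$: its stationary parts $A\cup\{y_1\}$, $B\cup\{y_2\}$ have size $k-(\ell+1)$ and are pairwise disjoint and disjoint from the dynamic vertices since $y_1,y_2\in T$, $V(F)\subseteq V(H)\setminus T$, $y_1\ne y_2$, and $k>\ell+1$. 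By \eqref{H(k,l) regular subgraph}, $H(k,\ell+1)$ has $2^{\ell+1}$ edge-disjoint matchings of size $2$ covering all $2k$ of its vertices, and $2^{\ell+1}\ge 2r>r$ as $\ell=\lceil\log r\rceil$, so the union of any $r$ of them is an $r$-regular subgraph of $H$ — a contradiction. Hence $H$ contains no forbidden copy of $\cK$.

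I would then count. Since $G'$ omits only $O(n^{k-1-\gamma})$ of the $(k-1)$-subsets of $V(H)\setminus T$, a deletion count gives $\Theta(n^{2(k-1)})$ copies of $\cK$ in $G'$, with all but $O(n^{k-1-\gamma})$ edges of $G'$ lying in $\Theta(n^{k-1})$ copies each. The key claim to establish is that all but $O_k(n^{2(k-1)-\gamma})$ copies of $\cK$ in $G'$ are \emph{monochromatic} with one common colour $v$. To do this, consider the sub-hypergraph $\mathcal S$ of the edges of a copy of $\cK$ through one stationary part; after deleting that part, $\mathcal S$ is a complete $(\ell+1)$-partite $(\ell+1)$-graph with parts of size $2$, and its Tur\'an number — even after the fixed blow-up making it $(k-1)$-uniform — has order $n^{k-1-2^{-\ell}}$, which is of strictly smaller order than $n^{k-1-\alpha}$ because $\alpha<2^{-\ell}$ (indeed $2^{\ell}\alpha=\tfrac{3\cdot 2^{\ell}+4}{3\cdot 2^{\ell}+5}<1$). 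So the largest class $G_v$, which has $|G_v|\ge|G'|/t$ edges (of order $n^{k-1-\alpha}$ up to a factor depending only on $k$), carries $\Theta(n^{k+\ell})$ copies of $\mathcal S$; if more than a $\beta$-fraction of the copies of $\cK$ were not monochromatic-$v$, I would take such a copy of $\mathcal S$ in $G_v$ as the $A$-part and pick the second stationary set $B$ so that at least $r$ of the resulting $B$-edges lie in a common class $G_y$, $y\ne v$ — which is possible as long as $\bigcup_{x\ne v}G_x$ has more than $\beta|G'|$ edges — obtaining a forbidden copy of $\cK$, a contradiction. Ruling out forbidden copies forces $\chi$ constant on each stationary side, and (since a copy monochromatic on both sides with two different colours is forbidden) full monochromaticity, and a pigeonhole over the $\le t$ colours pins down the common $v$. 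Finally, if $|G'\setminus G_v|\ge\beta|G'|$, then, using that all but $O(n^{k-1-\gamma})$ edges of $G'\setminus G_v$ lie in $\Theta(n^{k-1})$ copies of $\cK$ each and that each such copy is not monochromatic-$v$, there would be $\Theta(\beta|G'|\,n^{k-1})=\Theta(k^{4k}n^{2(k-1)-\gamma})$ non-monochromatic-$v$ copies, which exceeds the bound $O_k(n^{2(k-1)-\gamma})$ — this is exactly the comparison the factor $k^{4k}$ in $\beta$ is built for. Hence $|G_v|\ge(1-\beta)|G'|$, as required.

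The main obstacle is the middle of the last step: promoting ``a positive ($>\beta$) fraction of the copies of $\cK$ in $G'$ are not constant-$v$'' to an actual forbidden copy. This calls for a careful supersaturation/Tur\'an analysis of the $(\ell+1)$-partite-type hypergraphs $\mathcal S$ and $\cK$ inside the link graphs $G_x$, checking that the exponents $\alpha,\gamma$ and the (possibly polynomial-in-$n$) quantity $t=|T|$ all sit on the right side of the estimates, and matching the loss precisely against $\beta=k^{4k}n^{-1/(2^{\ell-1}(3\cdot 2^{\ell}+5))}$.
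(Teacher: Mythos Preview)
Your overall framework---count copies of $H(k-1,\ell+1)$ in $G'$, compare with the complete $(k-1)$-graph $K$, and exploit that certain coloured copies would yield an $r$-regular subgraph of $H$---matches the paper. The setup, the bound $|G'|\ge\binom{n-1}{k-1}-4n^{k-1-\gamma}$, and the observation that a copy coloured constantly $y_1$ on the $A$-side and constantly $y_2\ne y_1$ on the $B$-side lifts to a copy of $H(k,\ell+1)$ in $H$ are all fine.

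The gap is exactly where you flag it, and it is real. The sentence ``ruling out forbidden copies forces $\chi$ constant on each stationary side'' is false as stated: your forbidden copies are precisely those that \emph{are} constant on each side (with distinct colours), so their absence says nothing about copies whose $A$-side or $B$-side is already multicoloured. Your density sketch---take a copy of $\mathcal S$ inside $G_v$ and then find $B$ with $r$ of the $B$-edges in a single $G_y$---is not justified: having $|G'\setminus G_v|\ge\beta|G'|$ does not by itself produce, for any fixed $\mathcal S$, a choice of $B$ on which $r$ of the $2^{\ell+1}$ dynamic completions land in one colour class, especially as $t$ can be as large as $n^{\alpha}$ and the non-$v$ mass may be spread across colours and across $B$'s. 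So the ``promotion'' step is missing an actual argument, not just bookkeeping.

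The paper closes this gap with two ideas you do not have. First, it does \emph{not} work with your bichromatic forbidden copies; instead it singles out, inside a copy of $H(k-1,\ell+1)$, the pairs $\{f_1,f_2\}$ with $g(f_1)\ne g(f_2)$, $|f_1\cap f_2|\in\{\ell,\ell+1\}$, and $f_1\cap f_2\subseteq V_d$. It splits $P(G')=P_0\cup P_1$ according to whether such a pair is present. Second, a short connectivity argument (Claim~\ref{g-Wedge}) shows that any copy in $P_0$ is \emph{fully monochromatic}: the auxiliary graph on the $2^{\ell+2}$ edges of the copy, joining two edges when their intersection has size $\ell$ or $\ell+1$ and lies in the dynamic part, is connected, so constancy of $g$ propagates across the whole copy. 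Then $|P_0|$ is bounded by convexity of $\binom{x}{2}$ applied to the partition $\{|G_x|\}_{x\in T}$, which is where the hypothesis $|G_v|<(1-\beta)|G'|$ is actually used. For $P_1$, the paper bounds the number of bad pairs directly: for fixed $e_1,e_2,x,y$ the set of cores $h$ with $e_1\cup h\in G_x$ and $e_2\cup h\in G_y$ cannot contain a copy of $H(i,\ell-1)$ (else one obtains $2^{\ell}\ge r$ edge-disjoint perfect matchings in $H$), so by Proposition~\ref{ext(H(k,l))} it has at most $2n^{i-2^{-(\ell-1)}}$ elements; summing over $\binom{t}{2}$ colour pairs and the choices of $e_1,e_2$ gives $|P_1|\le\tfrac12\beta|P(K)|$, and this is precisely where the exponent $2\alpha-2^{1-\ell}=-\gamma$ and hence the value of $\beta$ emerge. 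Your Tur\'an discussion around $\mathcal S$ is aiming at a related estimate but at the wrong level of the structure; the paper's pair-level count against $H(i,\ell-1)$ is what makes the numerics close.
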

\begin{proof}
We take a $k$-uniform hypergraph $H$ with no $r$-regular subgraphs satisfying $|H| \geq {{n-1}\choose {k-1}} - n^{k-1-\frac{1}{2^{\ell-1}(3\cdot 2^{\ell}+5)}}.$
Then by Remark \ref{alpha}, we know 
\begin{equation}\label{assump}
|G|=|G'| \geq  {{n-1}\choose {k-1}} - 4 n^{k-1-\frac{1}{2^{\ell-1}(3\cdot 2^{\ell}+5)}}.
\end{equation}
By Theorem~\ref{asymptotic claim} and the fact $D=n^{k-1-\alpha}$, we also have
\begin{align}\label{t size 2}
t \leq D^{-1}k|H| \leq n^{\alpha}.
\end{align}
We pick $v$ such that $$ |G_{v}| = \max_{x\in V(H)} |G_{x}|.$$ For a contradiction, we assume $|G_{v}| < (1- \beta)|G'|$. For each $(k-1)$-set $e$ in $G'$, we define $g(e):= x$ if $e \in G_x$.
Let 
$$R_i(G'):=\{ \{f_1,f_2\} : f_1,f_2\in E(G'), |f_1\cap f_2|=i\}, \enspace R'_i(G'):=\{ \{f_1,f_2\} \in R_i(G') : g(f_1)\neq g(f_2)\}.$$
Also we let 
$$R'(G') := R'_{\ell}(G')\cup R'_{\ell+1}(G').$$ 
For a hypergraph $F$, we define $P(F)$ to be the set of copies of $H(k-1,\ell+1)$ in $F$ as follows.
$$P(F) := \{H' : H'\subseteq F, H'\simeq H(k-1,\ell+1)\} .$$ 
Also let 
$$P_1(G'):=\{H' \in P(G'): \exists\{f_1,f_2\}\in R'(G') \text{ such that } \{f_1,f_2\}\subseteq H', f_1\cap f_2 \subseteq V_d(H')\},$$
$$P_0(G'):= P(G')\setminus P_1(G').$$
%$P_1(G')$ be the set of copies of $H(k-1,\ell+1)$ so that the copy contains a pair $\{f_1,f_2\}$ in $R'(G')$ in the way that all vertices in $f_1\cap f_2$ are dynamic vertices in the copy of $H(k-1,\ell+1)$. 
%Let $P_1(G')$ be the set of copies of $H(k-1,\ell+1)$ so that the copy contains at least one pair $\{f_1,f_2\}$ in $R'(G')$ in the way that all vertices in $f_1\cap f_2$ are dynamic vertices of the copy of $H(k-1,\ell+1)$. 
Let $K$ be the complete $(k-1)$-graph on $V(G')$. To count the number of copies of $H(k-1,\ell+1)$ in $K$, we choose two disjoint $(k-1)$-sets, and choose $\ell+1$ vertices from one part and match them with other $\ell+1$ vertices on the other part. In this manner, one copy of $H(k-1,\ell+1)$ is counted exactly $2^{\ell+1}$ times which is the number of pairs in $H(k-1,\ell+1)$. Thus we get
\begin{equation} \label{PK}
|P(K)| = \frac{1}{2^{\ell+2}} {{k-1}\choose {\ell+1}} \frac{(k-1)!}{(k-\ell-2)!} {{n-1}\choose {k-1}}{{n-k}\choose {k-1}} .
\end{equation} 
Since $n$ is large enough and $(\frac{n}{k})^k \leq {{n}\choose{k}}$ holds,
\begin{align}\label{PK lower bound}
|P(K)| \geq \frac{1}{2^{\ell+2}} {{n-1}\choose {k-1}}{{n-k-1}\choose {k-1}} \geq  k^{-2k} n^{2k-2}.
\end{align}
Also (\ref{assump}) implies  
 \begin{equation}\label{K-G'} |K\setminus G'| \leq 4 n^{k-1-\frac{1}{2^{\ell-1}(3\cdot 2^{\ell}+5)}}.\end{equation}
First we show a lower bound on $|P(G')|$.

\begin{claim} \label{P(G')}
$|P(G')| > (1-\beta)|P(K)|$.
\end{claim}
\begin{proof}
It is enough to show $|P(K)\setminus P(G')| < \beta |P(K)|$. 
Note that any copy of $H(k-1,\ell+1)$ in $P(K)\setminus P(G')$ contains an edge $e \in K\setminus G'.$ Also by (\ref{H(k,l) disjoint}), we can find another edge $e'$ satisfying 
\begin{equation*}
\begin{minipage}[c]{0.7\textwidth}
$e'\in H(k-1,\ell+1)$ with $e\cap e'=\emptyset$ and $e\cup e' = V(H(k-1,\ell+1))$.
\end{minipage}
\end{equation*}
Thus, in order to count $P(K)\setminus P(G')$, we take a $(k-1)$-set $e$ in $K\setminus G'$ and a $(k-1)$-set $e'$ in $K$ disjoint from $e$. 
There are $|K\setminus G'|$ ways to choose $e$, and for fixed $e$, there are ${{n-k}\choose{k-1}}$ ways to choose $e'$. Then there are at most ${{k-1}\choose {\ell+1}} \frac{(k-1)!}{(k-\ell-2)!}$ copies of $H(k-1,\ell+1)$ containing both $e,e'$ (due to different ways to pair up dynamic vertices). By (\ref{K-G'}), the definition of $\beta$, and the fact that $(\frac{n}{k-1})^{k-1}\leq {{n}\choose{k-1}}$,
we have $$|K\setminus G'|\leq 4 n^{k-1 - \frac{1}{2^{\ell-1}(3\cdot 2^{\ell}+5)}} = 4 k^{-4k} \beta n^{k-1} < \frac{\beta}{2^{\ell+2}} {{n-1}\choose{k-1}},$$
Thus,
\begin{align*}
|P(K)\setminus P(G')|&\leq {{k-1}\choose {\ell+1}} \frac{(k-1)!}{(k-\ell-2)!}|K\setminus G'| {{n-k}\choose {k-1}} \\
& < \frac{\beta }{2^{\ell+2}} {{k-1}\choose {\ell+1}} \frac{(k-1)!}{(k-\ell-2)!} {{n-1}\choose {k-1}}{{n-k}\choose {k-1}}
\stackrel{(\ref{PK})}{=} \beta |P(K)|.
\end{align*}
  \end{proof}
Now we estimate $|P(G')|$ to show a contradiction.
\begin{claim} \label{P_1}
$|P_1(G')| < \frac{1}{2}\beta |P(K)|$.
\end{claim}
\begin{proof}
First, we count the number of pairs $\{f_1,f_2\}$ in $R'(G')=R'_\ell(G')\cup R'_{\ell+1}(G')$.
For $i\in \{\ell,\ell+1\}$, we take two disjoint $(k-1-i)$-sets $e_1,e_2$, and two distinct vertices $x,y \in T$. Let 
$$p(\{e_1,e_2\},\{x,y\}):=\{ h: \{e_1\cup h, e_2\cup h\}\in R'_i(G'), g(e_1\cup h)=x, g(e_2\cup h)=y\}.$$
If $p(\{e_1,e_2\},\{x,y\})$ contains a copy of $H(i,\ell-1)$, \eqref{H(k,l) regular subgraph} gives us $2^{\ell-1}$ edge-disjoint matchings of size two in $p(\{e_1,e_2\},\{x,y\})$ covering the same ground set. For each of these matchings $\{m,m'\}$, we obtain two edge-disjoint matchings $\{m\cup e_1\cup \{x\}, m'\cup e_2\cup \{y\}\}$ and $\{m\cup e_2\cup \{y\} , m'\cup e_1\cup \{x\}\}$. Thus we get $2^{\ell} \geq r$ edge-disjoint matchings covering the same ground set. It is a contradiction since $H$ does not contain any $r$-regular subgraphs. Thus $p(\{e_1,e_2\},\{x,y\})$ does not contain $H(i,\ell-1)$, so it has at most $2n^{i-\frac{1}{2^{\ell-1}}}$ edges by Proposition \ref{ext(H(k,l))}.
There are at most ${{n-1}\choose {k-i-1}}^2$ choices for $\{e_1,e_2\}$ and ${ {|T|}\choose 2}$ choices for $\{x,y\}$. So, 

\begin{align}\label{R'iG'}
|R'_i(G')| \leq \sum_{\{e_1,e_2\},\{x,y\}}2n^{i-\frac{1}{2^{\ell-1}}} \leq  2n^{i-\frac{1}{2^{\ell-1}}}{{n-1}\choose {k-i-1}}^2 {{|T|} \choose 2}.
\end{align}

For each pair in $R'_i(G')$, we can complete a copy of $H(k-1,\ell+1)$ by adding $i$ more vertices from outside to play the role of dynamic verticese and choosing $\ell+1-i$ vertices from each of $e_1,e_2$ to play the role of dynamic vertices and match those dynamic vertices. 
Thus each pair in $R'_i(G')$ is contained in at most $(\ell+1)!{{k-i-1}\choose {\ell+1-i}}^2{{n-1}\choose {i}}$ copies of $H(k-1,\ell+1)$. Thus by the fact that $2\alpha -\frac{1}{2^{\ell-1}} = -\frac{1}{2^{\ell-1}(3\cdot 2^{\ell}+5)}$ and the definition of $\beta$,
{
\begin{eqnarray*}
|P_1(G')| &=& \sum_{i=\ell}^{\ell+1}|R'_{i}(G')|(\ell+1)!{{k-i-1}\choose {\ell+1-i}}^2{{n-1}\choose {i}}  \\
&\stackrel{\eqref{R'iG'}}{\leq}& \sum_{i=\ell}^{\ell+1} 2n^{i-\frac{1}{2^{\ell-1}}}{{n-1}\choose {k-i-1}}^2 {{|T|} \choose 2} (\ell+1)! {{k-i-1}\choose{\ell+1-i}}^2 {{n-1}\choose {i}} \\
&\stackrel{(\ref{t size})}{\leq}& \sum_{i=\ell}^{\ell+1} k^2(\ell+1)! n^{i- \frac{1}{2^{\ell-1}} + 2\alpha}{{n-1}\choose {k-i-1}}^2{{n-1}\choose {i}} \\
&\leq& \sum_{i=\ell}^{\ell+1} k^2(\ell+1)!  n^{i- \frac{1}{2^{\ell-1}} + 2\alpha} n^{2k-2i-2} n^{i} \\
  &\leq& 2 k^2 k! n^{2k-2 - \frac{1}{2^{\ell-1}} + 2\alpha } < \frac{1}{2} k^{4k} n^{-\frac{1}{2^{\ell-1}(3\cdot 2^{\ell}+5)}}  k^{-2k} n^{2k-2} \stackrel{(\ref{PK lower bound})}{\leq} \frac{1}{2}\beta |P(K)|
\end{eqnarray*}
}
\end{proof}

Before we estimate $|P_0(G')|$, we prove the following claim.
\begin{claim}\label{g-Wedge}
Let $H'$ be a copy of $H(k-1,\ell+1)$ in $P_0(G')$. Then there exists a vertex $x\in T$ so that every $(k-1)$-set $e$ in $H'$ is contained in $G_x$.
\end{claim}
\begin{proof}
Remind that $V_d(H')$ denotes the set of dynamic vertices of $H'$.
We consider a graph $G_{H'}$ such that 
\begin{align*}
V(G_{H'})&:= \{f\in H'\}, \\ 
E(G_{H'})&:= \{ f_1f_2 : f_1,f_2 \in H', |f_1\cap f_2|\in \{\ell,\ell+1\}, f_1\cap f_2 \subseteq V_d(H')\}.
\end{align*} 
Let $A,B$ be two stationary parts in $H'$.
Consider two $(k-1)$-sets $f_1,f_2$ in $H'$ such that $|f_1\setminus f_2|=|f_2\setminus f_1|=1$ and both $f_1,f_2$ contain $A$. We consider an edge $e\in H'$ such that $e=(f_1\setminus A)\cup B $. Then, $e$ does not share any stationary vertices with $f_1$ or $f_2$, $|f_1\cap e|=\ell+1$, and $|f_2\cap e|=\ell$. Thus $e$ is adjacent to both $f_1$ and $f_2$ in $G_{H'}$.
Thus any two $(k-1)$-sets $f,f'$ in $H'$ containing $A$ with $|f\setminus f'|=1$ are in the same component of $G_{H'}$. Since being in the same component is transitive, all $(k-1)$-sets in $H'$ containing $A$ are in the same component in $G_{H'}$. By the same logic, all $(k-1)$-sets containing $B$ are in the same component in $G_{H'}$. Also there are edges between $f_1$ and $e$, so $G_{H'}$ is connected. On the other hand, if two $(k-1)$-sets $f_1,f_2$ are adjacent in $G_{H'}$, then $g(f_1)=g(f_2)$ because of the definition of $P_0(G')$. This fact and connectedness of $G_{H'}$ together imply that there exists a vertex $x\in T$ such that every edge in $H'$ belongs to $G_x$.\end{proof}

\begin{claim}\label{P_0}
$|P_0(G')| < (1- \frac{3}{2}\beta) |P(K)|$.
\end{claim}
\begin{proof}
By Claim~\ref{g-Wedge} and \eqref{H(k,l) regular subgraph}, a copy of $H(k-1,\ell+1)$ in $P_0(G')$ consists of $2^{\ell+1}$ pairs of two disjoint $(k-1)$-sets all in $G_x$ for some $x\in T$. To count the number of copies of $H(k-1,\ell+1)$ in $P_0(G')$, we choose two disjoint $(k-1)$-sets $e,e'$ with $g(e)=g(e')$, and choose $\ell+1$ elements from one and match them with $\ell+1$ vertices in the other side to play the role of dynamic vertices. Also, each $H(k-1,\ell+1)$ is counted $2^{\ell +1}$ times from each pair in this counting.
So, 
\begin{equation}\label{eq P_0} |P_0(G')| \leq \frac{1}{2^{\ell+1}}{{k-1}\choose {\ell+1}} \frac{(k-1)!}{(k-\ell-2)!} \sum_{x\in T'} {{|G_x|}\choose 2}.
\end{equation}
By convexity, the right side of (\ref{eq P_0}) is maximized when $|G_{v}| = (1-\beta) |G'|$ and $|G_{u}| = \beta|G'|$ for another vertex $u \in T$ and $|G_{x}|=0$ for other $x$. And ${{|G'|}\choose {2}} \leq \frac{1}{2}{{n-1}\choose {k-1}}^2 \leq  \frac{1}{2}(1+ \frac{k}{n}){{n-1}\choose {k-1}}{{n-k-1}\choose {k-1}}$.
Because of the fact $\frac{k}{n} + 2\beta^2 <\frac{1}{2}\beta$,
$$ |P_0(G')| \stackrel{\eqref{PK}}{\leq} ((1-\beta)^2 + \beta^2)(1+ \frac{k}{n})|P(K)| \leq (1-\frac{3}{2}\beta)|P(K)|.$$\end{proof}
\noindent In total, $$|P(G')| = |P_0(G')|+|P_1(G')| < 
( 1-\frac{3}{2}\beta)|P(K)| + \frac{1}{2}\beta|P(K)|\leq (1-\beta) |P(K)|.$$ 
However, it contradicts to Claim \ref{P(G')}.
Therefore, we conclude $|G_{v}| \geq (1-\beta)|G'|$. This proves Theorem \ref{stability}.
\end{proof}

Note that $2^{\ell}+3\geq 2r+1$ and $n^{-\frac{2}{3r^2}} \leq k^{4k} n^{-\frac{1}{2^{\ell-1}(3\cdot 2^{\ell}+5)}} \leq n^{-\frac{1}{6r^2}}$ for large enough $n$. Thus Theorem \ref{asymptotic}, Remark \ref{alpha} and Theorem \ref{stability} together imply Theorem \ref{asymptotic theorem}. If $r\in \{3,4\}$, then $\ell=2$, so we get $-\frac{1}{2^{\ell-1}(3\cdot 2^{\ell}+5)}= -\frac{1}{34},$ thus $\beta= k^{4k}n^{-\frac{1}{34}}$. So the above proof actually gives the following.

\begin{remark}\label{beta}
If $r\in \{3,4\}$ and $k\geq 2r+1$, then there exists an integer $n_k$ such that for $n>n_k$ any $n$-vertex $k$-uniform hypergraph $H$ with no $r$-regular subgraphs $H$ with $|H|\geq {{n-1}\choose{k-1}} - n^{k-1-\frac{1}{34}}$, then there exists a vertex $v$ which belongs to at least $(1- k^{4k} n^{-\frac{1}{34}})|H|$ edges.
\end{remark}

\section{Proof of Theorem \ref{main theorem}} \label{exact result}

In this section, we prove Theorem \ref{main theorem}. We assume $r\in \{3,4\}$, $k =k'r $, $k'\geq 140$ and that $n$ is sufficiently large.
If an $n$-vertex $k$-uniform hypergraph $H$ with no $r$-regular subgraphs contains at least ${{n-1}\choose {k-1}}$ edges, we may suppose $|H|={{n-1}\choose{k-1}}$ by deleting some edges if necessary. If we show that $H$ has to be a full $k$-star, then it completes the theorem because a full $k$-star with one more edge $e$ always contains an $r$-regular subgraph when $r\mid k$ by the following Observation \ref{obs}. 
Since $r\in \{3,4\}$ implies $\ell=2$, we have $\beta = k^{4k} n^{-\frac{1}{34}}$.
By Remark \ref{beta}, there exists a vertex $v$ with $|H^*|\leq k^{4k} n^{-\frac{1}{34}} {{n-1}\choose {k-1}}$, where
we define
$$H^* := \{e\in E(H): v\notin e\},\enspace \tilde{H}:= \{ f\in {{V(H)}\choose {k}} : v\in f, f\notin H\}.$$ 
Then 
\begin{align}\label{H* size}
|H^*|=|\tilde{H}| \leq k^{4k} n^{-\frac{1}{34}} {{n-1}\choose {k-1}}
\end{align}
by our assumption and Remark \ref{beta}. To show that $H$ is a full $k$-star, it is enough to show $|H^*|=0$. Suppose $|H^*|>0$ for a contradiction.

\begin{observation}\label{obs}
If $\{e'_1,e'_2,\cdots,e'_r\}$ is a partition of $e\in H^*$ into $r$ sets of size $k'$ and $g \subseteq V(G)-\{v\}-e$ is a $(k'-1)$-set, then there exists $j$ such that $(e\setminus e'_j)\cup g\cup \{v\}$ is not an edge of $H$.
\end{observation}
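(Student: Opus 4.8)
The plan is to argue by contradiction: suppose that for \emph{every} $j\in[r]$ the set $f_j:=(e\setminus e'_j)\cup g\cup\{v\}$ is an edge of $H$, and then exhibit an $r$-regular subgraph of $H$ using the $r+1$ edges $e,f_1,\dots,f_r$, contradicting the hypothesis that $H$ has no $r$-regular subgraph.

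First I would check that each $f_j$ is a legitimate candidate to be an edge, i.e.\ that $|f_j|=k$. Since $\{e'_1,\dots,e'_r\}$ partitions $e$ into $r$ parts of size $k'$, we have $|e\setminus e'_j|=k-k'=(r-1)k'$; moreover $g$ is disjoint from $e$ and from $\{v\}$, $v\notin e$, and $|g|=k'-1$, so the union is disjoint and $|f_j|=(r-1)k'+(k'-1)+1=rk'=k$. Next I would note that $e,f_1,\dots,f_r$ are pairwise distinct: $e$ omits $v$ while each $f_j$ contains $v$, and for $i\neq j$ the set $f_j$ contains the (nonempty, as $k'\ge 1$) part $e'_i$ whereas $f_i$ does not, so $f_i\neq f_j$.

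The core of the argument is the degree count for the collection $\{e,f_1,\dots,f_r\}$. The vertex $v$ lies in each of $f_1,\dots,f_r$ and in no other edge of the collection, so it is covered exactly $r$ times. Each vertex of $g$ lies in every $f_j$ (since $g\subseteq f_j$) and in no other edge of the collection, hence is also covered exactly $r$ times. Finally, if $x\in e$, say $x\in e'_j$, then $x$ lies in $e$ and in $f_i$ for every $i\neq j$, for a total of $1+(r-1)=r$; since the parts $e'_j$ cover all of $e$, every vertex of $e$ is covered exactly $r$ times. No vertex outside $\{v\}\cup g\cup e$ appears in any of these edges. Thus $\{e,f_1,\dots,f_r\}$ is a set of distinct edges of $H$ covering the nonempty vertex set $\{v\}\cup g\cup e$ exactly $r$ times each and covering nothing else, i.e.\ an $r$-regular subgraph of $H$ --- the desired contradiction. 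Hence some $f_j$ must fail to be an edge of $H$.

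There is essentially no obstacle here; the only care needed is in the bookkeeping of the degree count, in particular using that the parts $e'_j$ are nonempty (which follows from $k'\ge 140$) both to guarantee distinctness of the $f_j$ and to ensure that every vertex of $e$ is genuinely covered $r$ times.
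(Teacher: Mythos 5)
Your proof is correct and is exactly the argument the paper gives: assume all $r$ sets $(e\setminus e'_j)\cup g\cup\{v\}$ are edges and observe that together with $e$ they cover $e\cup g\cup\{v\}$ exactly $r$ times, yielding a forbidden $r$-regular subgraph. The paper states this in one line; your version just spells out the cardinality, distinctness, and degree-count details.
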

\begin{proof}
Suppose not. Then $e, (e\setminus e'_1)\cup g\cup \{v\}, (e\setminus e'_2)\cup g\cup \{v\}, \cdots, (e\setminus e_r)\cup g\cup \{v\}$ together form an $r$-regular subgraph, a contradiction. Thus there is a choice $j$ such that $(e\setminus e'_j)\cup g\cup \{v\}$ is not an edge of $H$. \end{proof}

Here we define {\em wedge} as follows and count them to derive a contradiction.

\begin{definition}\label{def wedge}
A pair of $k$-sets $(e,f)$ is a wedge if it satisfies the following:

(1) $e\in H^*, f\in \tilde{H}$;

(2) $|e\cap f| = k-k'$.
\end{definition}

Let $\Lambda(H)$ be the number of wedges in $H$.
Then the following claim gives us a lower bound for $\Lambda(H)$.

\begin{claim}\label{lambda lower bound}
$$\Lambda(H) \geq \frac{1}{r}{{k}\choose{k'}}{{n-k-1}\choose{k'-1}}|H^*|.$$
\end{claim}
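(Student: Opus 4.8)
The plan is to prove the lower bound on $\Lambda(H)$ by a double-counting argument, where the main input is Observation~\ref{obs} applied to each edge $e\in H^*$. Fix an edge $e\in H^*$; by definition $v\notin e$. I will count the wedges $(e,f)$ with this fixed first coordinate. A wedge with first coordinate $e$ is determined by a choice of which $k'$-subset $e'_j\subseteq e$ to ``remove'' together with a $(k'-1)$-set $g\subseteq V(H)\setminus(\{v\}\cup e)$ used to replace it, giving the candidate $k$-set $f=(e\setminus e'_j)\cup g\cup\{v\}$; this $f$ automatically contains $v$ and satisfies $|e\cap f|=k-k'$, so $(e,f)$ is a wedge precisely when $f\in\tilde H$, i.e. when $f\notin H$.

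The key step is then: for each fixed $(k'-1)$-set $g\subseteq V(H)\setminus(\{v\}\cup e)$, pick an arbitrary partition $e'_1,\dots,e'_r$ of $e$ into $r$ sets of size $k'$ (possible since $r\mid k$), and apply Observation~\ref{obs} to conclude that for at least one index $j$, the set $(e\setminus e'_j)\cup g\cup\{v\}$ is not an edge of $H$, hence yields a wedge $(e,f)$. So each of the $\binom{n-k-1}{k'-1}$ choices of $g$ contributes at least one wedge with first coordinate $e$. This naively gives $\binom{n-k-1}{k'-1}$ wedges per $e$, but the same $f$ may arise from different $g$'s — more precisely, a given wedge $(e,f)$ corresponds to the pair $(e'_j,g)$ where $e\setminus e'_j = e\cap f$ (so $e'_j = e\setminus f$ is determined by $f$) and $g = f\setminus(\{v\}\cup e)$ is also determined by $f$. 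Thus actually the map from valid $g$'s (those producing a non-edge for the chosen partition) to wedges is injective once we realize $g$ is recoverable from $f$; the factor $1/r$ loss comes instead from the fact that we only use \emph{one} partition per $g$ and Observation~\ref{obs} only guarantees one good index out of the partition, so we should be more careful.

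The cleaner route, and the one I would actually write, is to invoke Observation~\ref{obs2}: for a fixed $e\in H^*$ and a fixed $(k'-1)$-set $g$, let $\mathcal B_{e,g}\subseteq\binom{e}{k'}$ be the collection of $k'$-subsets $e'\subseteq e$ such that $(e\setminus e')\cup g\cup\{v\}\notin H$. Observation~\ref{obs} (applied to every $r$-equipartition of $e$) says exactly that every $r$-equipartition of $e$ has at least one block in $\mathcal B_{e,g}$, so Observation~\ref{obs2} with $[k]$ identified with $e$ gives $|\mathcal B_{e,g}|\geq\frac1r\binom{k}{k'}$. Each element $e'\in\mathcal B_{e,g}$ produces a distinct wedge $(e,f)$ with $f=(e\setminus e')\cup g\cup\{v\}$, and distinct pairs $(g,e')$ give distinct $f$ (since $e'=e\setminus f$ and $g=f\setminus(\{v\}\cup e)$ are recovered from $f$). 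Summing over all $\binom{n-k-1}{k'-1}$ choices of $g$ and then over all $e\in H^*$ yields
\[
\Lambda(H)\ \ge\ \sum_{e\in H^*}\ \sum_{g}\ |\mathcal B_{e,g}|\ \ge\ |H^*|\,\binom{n-k-1}{k'-1}\,\frac1r\binom{k}{k'},
\]
which is the claim. The only thing to double-check is the disjointness condition $g\cap e=\emptyset$ and $v\notin g$ so that $f$ has size exactly $k$ and lies in $\binom{V(H)}{k}$ with $v\in f$; this is built into the range of $g$, namely $g\in\binom{V(H)\setminus(\{v\}\cup e)}{k'-1}$, and $|V(H)\setminus(\{v\}\cup e)|=n-k-1$, so there are exactly $\binom{n-k-1}{k'-1}$ such $g$. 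The main (and only real) obstacle is making sure the bookkeeping of ``which data determines a wedge'' is airtight so that no overcounting or undercounting sneaks in; once $e'=e\setminus f$ and $g=f\setminus(\{v\}\cup e)$ are seen to be functions of $f$ alone, the estimate above is immediate.
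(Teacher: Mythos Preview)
Your ``cleaner route'' is correct and is essentially identical to the paper's proof: fix $e\in H^*$ and a $(k'-1)$-set $g$ (the paper calls it $S$), apply Observation~\ref{obs} to every $r$-equipartition of $e$ and then Observation~\ref{obs2} to conclude that at least $\frac{1}{r}\binom{k}{k'}$ of the $k'$-subsets $e'\subseteq e$ satisfy $(e\setminus e')\cup g\cup\{v\}\notin H$, and note that distinct triples $(e,g,e')$ yield distinct wedges. The exploratory first paragraph could be dropped, but the argument you converge on is exactly the one in the paper.
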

\begin{proof}
To count the wedges $(e,f)$ in $H^*$, instead we count $(e, f\setminus(e\cup \{v\}), e\setminus f)$.
First we choose $e$. There are $|H^*|$ ways to choose $e$. For each chosen $e$, there are ${{n-k-1}\choose{k'-1}}$ ways to choose $(k'-1)$-set $S$ outside $e\cup\{v\}$ playing the role of $f\setminus (e\cup \{v\})$. 
For fixed $e$ and $S$, we call a $k'$-subset $D$ of $e$ {\em $(S,e)$-good} if $(e\setminus D)\cup S\cup \{v\} \in H$, and {\em $(S,e)$-bad} otherwise.
By Observation \ref{obs}, for an $r$-equipartition $\{e'_1,\dots, e'_r\}$ of $e$, at least one of $e'_i$ is $S$-bad. By Observation \ref{obs2}, this implies that for fixed $e$ and $S$, there are at least $\frac{1}{r} {{k}\choose{k'}}$ $(S,e)$-bad subsets of $e$.
For each $(S,e)$-bad subset $D$ of $e$, we get a wedge $$(e, (e\setminus D )\cup S\cup \{v\}).$$
Since those wedges are distinct for distinct $(e,S,D)$, we get
$\Lambda(H) \geq \frac{1}{r}{{k}\choose{k'}}{{n-k-1}\choose{k'-1}}|H^*|.$
\end{proof}
\begin{definition}
A $3$-set $T\in {{V(H)\setminus\{v\}}\choose {3}}$ is good if $d_{\tilde{H}}(T\cup \{v\}) < \frac{1}{8}{{n-k-4}\choose{k-4}}$ and bad otherwise. Let $W$ be the collection of all bad $3$-sets in $H$.
\end{definition}

\begin{claim}
$$|W| \leq k^{5k} n^{3 -\frac{1}{34}}.$$
\end{claim}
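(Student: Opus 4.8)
The goal is to bound the number of bad $3$-sets $W$, where a $3$-set $T$ is bad when it is contained in at least $\frac{1}{8}\binom{n-k-4}{k-4}$ edges of $\tilde H$ (together with $v$). The natural approach is a double-counting (or averaging) argument comparing $|W|$ against the size of $\tilde H$, which we already control by \eqref{H* size}: $|\tilde H| \le k^{4k} n^{-1/34}\binom{n-1}{k-1}$. First I would count pairs $(T,f)$ with $T$ a bad $3$-set, $f \in \tilde H$, and $T\cup\{v\}\subseteq f$. On one hand, every bad $3$-set contributes at least $\frac{1}{8}\binom{n-k-4}{k-4}$ such pairs by definition, so the number of these pairs is at least $\frac{1}{8}|W|\binom{n-k-4}{k-4}$. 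On the other hand, each $f\in\tilde H$ contains exactly $\binom{k-1}{3}$ triples $T\subseteq f\setminus\{v\}$, so the number of such pairs is at most $\binom{k-1}{3}|\tilde H|$.

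Combining the two estimates gives
\[
\frac{1}{8}|W|\binom{n-k-4}{k-4} \le \binom{k-1}{3}|\tilde H| \le \binom{k-1}{3}k^{4k}n^{-\frac{1}{34}}\binom{n-1}{k-1},
\]
so
\[
|W| \le 8\binom{k-1}{3}k^{4k}n^{-\frac{1}{34}}\cdot\frac{\binom{n-1}{k-1}}{\binom{n-k-4}{k-4}}.
\]
It then remains to bound the ratio $\binom{n-1}{k-1}/\binom{n-k-4}{k-4}$ crudely. Writing it out, this ratio is a product of roughly $2k$ factors each at most $n$, divided by a product of $k-4$ factors each at least $n-2k$, so for $n$ large it is at most $n^{k+2}$ times a $k$-dependent constant (more carefully, $\binom{n-1}{k-1}\le n^{k-1}/(k-1)!$ and $\binom{n-k-4}{k-4}\ge (n/2)^{k-4}/(k-4)!$ for $n$ large, giving ratio at most $2^{k-4}(k-4)!/(k-1)!\cdot n^{3}\le n^{3}$ for $n$ large). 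Absorbing the polynomial-in-$k$ prefactors $8\binom{k-1}{3}$, the $(k-4)!/(k-1)!$, and the $2^{k-4}$ into the slack between $k^{4k}$ and $k^{5k}$ (valid since $k\ge 140r\ge 420$ is large), we obtain $|W|\le k^{5k}n^{3-\frac{1}{34}}$ for $n$ sufficiently large, as claimed.

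\textbf{Main obstacle.} There is no real conceptual obstacle here — the argument is a one-line double count followed by a routine binomial-ratio estimate. The only thing requiring a little care is making sure all the $k$-dependent constants ($8\binom{k-1}{3}$, the factorial ratio from the binomial coefficients, and the $2^{k}$-type factor from $(n-k-4)\ge n/2$) genuinely fit inside the gap between $k^{4k}$ and $k^{5k}$; since $k^{5k}/k^{4k}=k^{k}$ dwarfs any single-exponential or polynomial factor in $k$ once $k$ is large, this is harmless, but one should state the inequality $8\binom{k-1}{3}\cdot 2^{k-4}(k-4)!/(k-1)!\le k^{k}$ (which holds comfortably for $k\ge 420$) explicitly rather than sweeping it under "for $n$ large enough," since it is a statement about $k$, not $n$.
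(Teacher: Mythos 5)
Your proof is correct and is essentially the same double-counting argument as the paper's: the paper counts edges of $\tilde H$ containing a bad $3$-set (dividing by the at most $\binom{k}{3}$ bad triples per edge), while you count incidence pairs $(T,f)$ directly (multiplying by $\binom{k-1}{3}$), and both then use the same crude bound $\binom{n-1}{k-1}\le O(k^3 n^3)\binom{n-k-4}{k-4}$ and absorb all $k$-dependent constants into the gap between $k^{4k}$ and $k^{5k}$.
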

\begin{proof}
We count all $k$-sets in $\tilde{H}$ which contain a bad $3$-set. 
Each bad $3$-set $T$ belongs to at least $\frac{1}{8} {{n-k-4}\choose{k-4}}$ distinct $k$-sets in $\tilde{H}$. Also, each $k$-set can contain at most ${{k}\choose{3}}$ distinct bad $3$-sets.
Thus the number of $k$-sets in $\tilde{H}$ containing a bad $3$-set is at least $$\frac{1}{8}{{k}\choose{3}}^{-1}|W|{{n-k-4}\choose{k-4}}.$$ 
From (\ref{H* size}),
$$\frac{1}{8}{{k}\choose{3}}^{-1}|W|{{n-k-4}\choose {k-4}} \leq |\tilde{H}| \leq k^{4k} n^{-\frac{1}{34}}{{n-1}\choose{k-1}}.$$
Since ${{n-1}\choose{k-1}} \leq 2 k^3 n^3{{n-k-4}\choose{k-4}}$ for sufficiently large $n$, we get
$$|W| \leq 8{{k}\choose{3}} k^{4k} n^{-\frac{1}{34}} {{n-1}\choose{k-1}}{{n-k-4}\choose{k-4}}^{-1}\leq \frac{16}{6}k^{4k+6}n^{3-\frac{1}{34}}  \leq k^{5k} n^{3 - \frac{1}{34}}$$ since $n$ is sufficiently large and $k\geq 140r$.
\end{proof}

\begin{claim}\label{lambda upper bound}
$$\Lambda(H) \leq 1.01|\tilde{H}|{{k-1}\choose{k'-1}} \binom{k'-34}{3}^{-1} {{n}\choose{k'-3}}n^2.$$
\end{claim}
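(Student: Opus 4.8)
The plan is to count wedges $(e,f)$ by first fixing the ``lighter'' coordinate $f\in\tilde H$ and then bounding the number of valid $e\in H^*$ with $|e\cap f|=k-k'$. Given $f$, such an $e$ is obtained by choosing the $(k-k')$-set $e\cap f\subseteq f\setminus\{v\}$ and then the $k'$-set $e\setminus f\subseteq V(H)\setminus(f\cup\{v\})$ that replaces the removed part. The crude count is $\binom{k-1}{k-k'}\binom{n-k-1}{k'}=\binom{k-1}{k'-1}\binom{n-k-1}{k'}$ choices, but this vastly overcounts because $e$ must actually lie in $H^*$, and—more importantly for getting the factor $\binom{k'-34}{3}^{-1}$—we want to charge each wedge to a \emph{good} $3$-set so as to exploit the degree bound $d_{\tilde H}(T\cup\{v\})<\tfrac18\binom{n-k-4}{k-4}$ built into the definition of ``good''. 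So the real scheme is: for each wedge, its removed part $e\cap f$ is a $(k-k')$-subset of $f\setminus\{v\}$, hence of size $k-k'=k'(r-1)\ge 2k'$; inside $e\cap f$ there are many $3$-sets, and by the bound on $|W|$ (Claim above, $|W|\le k^{5k}n^{3-1/34}$) almost all of them are good. I would first show that every wedge can be encoded as a triple $(f,T,D)$ where $f\in\tilde H$, $T$ is a good $3$-subset of $f\cap e$, and $D=e\setminus f$ is the new $k'$-set, together with bookkeeping data recording how $T$ and $D$ sit inside $e$; the point is that once we insist $T$ be good, the number of $f\in\tilde H$ containing a given $T\cup\{v\}$ is at most $\tfrac18\binom{n-k-4}{k-4}$, which is much smaller than $\binom{n-1}{k-1}$, and this is what buys the improvement.

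Concretely I would run the count in the order: (i) choose $f\in\tilde H$ — $|\tilde H|$ ways; (ii) inside $f$, the wedge determines the $(k-k')$-set $e\cap f$ and hence (roughly) $\binom{k-1}{k-k'}=\binom{k-1}{k'-1}$ possibilities, but I will instead route through a good $3$-set: pick a good $3$-set $T\subseteq e\cap f$ and note $e\cap f\setminus T$ is a $(k-k'-3)$-set in $f\setminus(T\cup\{v\})$; (iii) choose $D=e\setminus f$, a $k'$-set disjoint from $f\cup\{v\}$ — at most $\binom{n}{k'}$ ways, which I will regroup as $\binom{n}{k'-3}\cdot(\text{something }O(n^3))$ to match the statement's $\binom{n}{k'-3}n^2$. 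The double counting over the choice of $T$ among the $3$-subsets of $e\cap f$ is where $\binom{k'-34}{3}^{-1}$ enters: since $e\cap f$ has size $\ge 2k'$ but at most $k^{5k}n^{3-1/34}$ of the global $3$-sets are bad, at least $\binom{k'-34}{3}$ of the $3$-subsets of $e\cap f$ are good for all but a negligible fraction of wedges, so summing over all good $T$ counts each wedge at least $\binom{k'-34}{3}$ times, up to the $1.01$ slack absorbing the exceptional wedges (those with few good $3$-sets, whose total is $O(|W|\cdot\text{poly})$ and negligible by the bound on $|W|$).

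So the displayed bound will come out as: number of wedges $\le 1.01\cdot\binom{k'-34}{3}^{-1}\cdot\bigl(\text{choices of }f\bigr)\cdot\bigl(\text{choices of }T\text{ inside }f\bigr)\cdot\bigl(\text{choices of }D\bigr)$, and I would arrange the $T$-choice and the degree bound to produce exactly $\binom{k-1}{k'-1}$ and $\binom{n}{k'-3}n^2$ respectively, with the $\tfrac18$ and other constants swallowed into the $1.01$ for $n$ large. The main obstacle is the bookkeeping in step (iii): getting the count of $D$'s together with the residual choices inside $f$ to collapse cleanly to $\binom{n}{k'-3}n^2$ rather than the naive $\binom{n}{k'}$, which requires being careful that $e\cap f$ is almost entirely determined once $T$ and a bounded amount of extra data are fixed — in other words, exploiting that the good $3$-set $T$ together with $f$ pins down the large set $e\cap f$ up to polynomially many choices. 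A secondary technical point is verifying $k'\ge 140$ (equivalently $k\ge 140r$) makes $\binom{k'-34}{3}$ positive and makes the ``almost all $3$-subsets of $e\cap f$ are good'' step valid, i.e. that $k^{5k}n^{3-1/34}$ is negligible against $\binom{k'-34}{3}\cdot(\text{relevant wedge count})$; this is the same kind of estimate already used in the previous claim and should go through for $n$ sufficiently large.
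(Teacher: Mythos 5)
Your opening matches the paper's: count wedges as triples $(f,\, e\cap f,\, e\setminus f)$, giving the factors $|\tilde{H}|$ and $\binom{k-1}{k'-1}$. But the heart of the claim is the remaining factor $1.01\binom{k'-34}{3}^{-1}\binom{n}{k'-3}n^2$, which must bound, for each fixed $f$ and each fixed $(k-k')$-set $D=e\cap f$, the number of admissible $k'$-sets $e\setminus f$, i.e.\ the size of the link $H^*_D$. You explicitly defer this step (``the main obstacle is the bookkeeping in step (iii)''), and the route you sketch for it does not work: you place the good $3$-set $T$ inside $e\cap f$ and hope that the degree bound $d_{\tilde{H}}(T\cup\{v\})<\frac18\binom{n-k-4}{k-4}$ ``buys the improvement'' by limiting the number of $f$'s containing $T\cup\{v\}$. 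But $f$ has already been counted by the factor $|\tilde{H}|$, and constraining $f$ cannot reduce the number of choices of the disjoint set $e\setminus f$ from the naive $\binom{n}{k'}$ down to roughly $n^{k'-1}$.

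The paper's argument puts the good $3$-set $T$ inside $e\setminus f$ instead, and uses goodness in the opposite direction: good means $T\cup\{v\}$ has few non-neighbours, so any four good $3$-sets admit a common $(k-4)$-set $S$ with $T_i\cup S\cup\{v\}\in H$ for all $i$. One splits $H^*_D$ into the sets containing $35$ disjoint bad $3$-sets (negligibly few, by the bound on $|W|$) and the rest; each remaining $B\in H^*_D$ contains at least $\binom{k'-34}{3}$ good $3$-sets by Theorem \ref{obs 3matching}, which is where that binomial coefficient comes from --- your ``almost all $3$-subsets of $e\cap f$ are good because $|W|$ is small'' gives no per-set guarantee, and the constant $34$ is tied to the matching number $35$ inside a $k'$-set, not to the size $k-k'$. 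Finally, for a fixed $(k'-3)$-set $A$, the number of good $3$-sets $T$ with $A\cup T\in H^*_D$ is at most $n^2$: otherwise Theorem \ref{generalized C4} produces $T_1,T_2,T_3,T_4$ with $T_1\cup T_2=T_3\cup T_4$ and $T_1\cap T_2=T_3\cap T_4=\emptyset$, and together with the common completion $S$ the eight edges $T_i\cup S\cup\{v\}$ and $D\cup A\cup T_i$ contain both a $3$-regular and a $4$-regular subgraph, a contradiction. This application of the generalized-$C_4$ theorem is the mechanism that yields the $\binom{n}{k'-3}n^2$ saving, and it is entirely absent from your proposal.
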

\begin{proof}
To count the number of wedges $(e,f)$ in $H$, instead we count $(f, e\cap f, e\setminus f)$. 
The number of ways to pick $f$ is $|\tilde{H}|$. For fixed $f$, the number of ways to choose a $(k-k')$-subset $D$ of $f\setminus \{v\}$ which will play a role of  = $e\cap f$ is 
$${{k-1}\choose{k-k'}} = {{k-1}\choose{k'-1}}.$$
For a $(k-k')$-set $D$, let $H^*_D$ be the link graph of $D$ in $H^*$(i.e.  $H^*_D= \{e\setminus D : e \in H^*, D\subseteq e \}$).
We partition $H^*_D$ into the following two hypergraphs,
 \begin{align*}
 H_D^1 &:= \{ B \in H^*_D: B \text{ contains at least 35 disjoint bad $3$-sets} \} \\
 H_D^2 &:= H^*_D\setminus H_D^1.
 \end{align*}

First, since any $k'$-set in $H_D^1$ contains $35$ disjoint bad $3$-sets, $k'\geq 140$ and $n$ is sufficiently large, 
\begin{align}\label{HD1 size}
|H_D^1| \leq |W|^{35}{{n}\choose{k'-105}} \leq (k^{5k} n^{3-\frac{1}{34}})^{35} n^{k'-105}\leq k^{165k} n^{k'-1-\frac{1}{34}} <  \frac{1}{100 k^k} n^{k'- 1}.
\end{align}
To find an upper bound of $|H_D^2|$, note that any $k'$-set in $H_D^2$ contains at least one good $3$-set because it contains at most $34$ disjoint bad $3$-sets and $k'-3\cdot 34 \geq 3$. Thus we first bound the number of pairs $(A,T)$ where $T$ is a good $3$-set and $A=B\setminus T$ for some $B\in H_D^2$. There are at most ${{n}\choose{k'-3}}$ ways to choose $A$. 
We claim that for fixed $A$, there are at most $2{{n-k+k'-1}\choose{2}}$ distinct good $3$-sets $T$ such that $A\cup T \in H_D^2$. Otherwise, by Theorem \ref{generalized C4}, there exists four good $3$-sets $T_1,T_2,T_3,T_4$ with 
$$A\cup T_i \in H_D^2,~ T_1\cup T_2 = T_3\cup T_4=T',~ T'\cap (D\cup\{v\})=\emptyset ~\text{ and }~ T_1\cap T_2=T_3\cap T_4=\emptyset.$$ 
Since each $T_i\cup \{v\}$ belongs to at most $\frac{1}{8}{{n-k-4}\choose{k-4}}$ sets in $\tilde{H}$, there are at most $\frac{1}{2}{{n-k-4}\choose{k-4}}$ many $(k-4)$-sets $S$ outside $D\cup A \cup T' \cup \{v\}$ such that $T_i\cup S\cup \{v\} \notin H$ for some $i$. So there exists a $(k-4)$-set $S$ such that $T_i\cup S\cup \{v\} \in H$ for $i\in [4]$ and $S\cap (D\cup A\cup T'\cup \{v\}) =\emptyset$. Then 
$$ T_1\cup S\cup \{v\}, \dots , T_4\cup S\cup \{v\},~ D\cup A\cup T_1, \dots, D\cup A\cup T_4$$
together contain both $3$-regular subgraph and $4$-regular subgraph of $H$, a contradiction. Thus for each $A$, there are at most $2{{n-k+k'-1}\choose{2}} \leq n^2$ distinct $T$'s with $A\cup T \in H_D^2$. Thus the number of such pairs $(A,T)$ is at most $\binom{n}{k'-3} n^2 $. 

Let $B \in H_D^2$, then $B$ does not contain a matching of bad sets of size $35$, and $k'\geq 4\cdot 35 = 140$. So we apply Theorem \ref{obs 3matching}, then we get that the number of bad sets in $B$ is at most $\binom{k'}{3} - \binom{k'-34}{3}$. Thus there are at least $\binom{k'-34}{3}$ good sets in $B$. Hence each $B$ yields at least $\binom{k'-34}{3}$ distinct pairs $(A,T)$. So
\begin{align}\label{HD2 size}
|H_D^2| \leq \binom{k'-34}{3}^{-1}\binom{n}{k'-3}n^2.
\end{align}
Since $(\frac{n}{k'-1})^{k'-1} \leq \binom{n}{k'-3}$ and $k\geq 3k'$, we know $ \frac{1}{ k^k} n^{k'- 1} \leq \binom{k'-34}{3}^{-1} \binom{n}{k'-3}n^2 $ for large enough $n$. Thus from (\ref{HD1 size}) and (\ref{HD2 size}),
\begin{align*}
|H^*_D| &= |H_D^1|+|H_D^2| \leq \frac{1}{100 k^k} n^{k'- 1} + \binom{k'-34}{3}^{-1} \binom{n}{k'-3}n^2 \leq 1.01\binom{k'-34}{3}^{-1} \binom{n}{k'-3}n^2
\end{align*}
for sufficiently large $n$.
Therefore we get
$$ \Lambda(H) \leq 1.01|\tilde{H}|{{k-1}\choose{k'-1}}\binom{k'-34}{3}^{-1} \binom{n}{k'-3}n^2.$$
\end{proof}

Therefore, from Claim \ref{lambda lower bound} and Claim \ref{lambda upper bound}, we get 
\begin{align*}
\frac{1}{r}{{k}\choose{k'}}{{n-k-1}\choose{k'-1}}|H^*| \leq \Lambda(H) \leq 1.01|\tilde{H}|{{k-1}\choose{k'-1}}\binom{k'-34}{3}^{-1} \binom{n}{k'-3}n^2.
\end{align*}
Since $n$ is sufficiently large, we have ${{n}\choose{k'-3}}n^2 \leq 1.01 (k'-1)(k'-2){{n-k-1}\choose{k'-1}}$. Since we assumed $|H^*|=|\tilde{H}|>0$, this yields 
\begin{align*}
\frac{1}{r}{{k}\choose{k'}}{{n-k-1}\choose{k'-1}} &\leq 1.01{{k-1}\choose{k'-1}} {{k'-34}\choose{3}}^{-1} {{n}\choose{k'-3}}n^2\\
&\leq 1.01^2 {{k-1}\choose{k'-1}} {{k'-34}\choose{3}}^{-1} (k'-1)(k'-2) {{n-k-1}\choose{k'-1}}.
\end{align*}
and by dividing ${{n-k-1}\choose{k'-1}}$ on both sides, we get
\begin{align*}
\frac{1}{r}{{k}\choose{k'}} \leq  1.01^{2}{{k-1}\choose{k'-1}} {{k'-34}\choose{3}}^{-1} (k'-1)(k'-2) = \frac{k'}{k} {{k}\choose{k'}} \frac{1.01^2 \cdot 6 (k'-1)(k'-2)}{(k'-34)(k'-35)(k'-36)}.
\end{align*}
From this and the fact that $\frac{1}{r} = \frac{k'}{k}$, we get
\begin{align*}
\frac{(k'-34)(k'-35)(k'-36)}{(k'-1)(k'-2)} \leq 1.01^2 \cdot 6,
\end{align*}
which is a contradiction since $k'\geq 140$.

\section{What happens if $r$ is big or $r\nmid k$?} \label{example}
In the same spirit as Theorem \ref{main theorem}, we propose the following conjecture.

\begin{conjecture}\label{r>4}
For $r$, there exist $k_r$, $n_k$ such that 
for all $k> k_r$, and $n> n_k$, and $r\mid k$, if $H$ is a $k$-uniform hypergraph with no $r$-regular subgraphs, then $$|H|\leq {{n-1}\choose{k-1}}$$
and equality holds if and only if $H$ is a full $k$-star.
\end{conjecture}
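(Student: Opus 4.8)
\textbf{Proof proposal for Conjecture \ref{r>4}.} The plan is to follow the same two-stage strategy that proves Theorem \ref{main theorem}: first establish the asymptotic/stability result (that almost all edges pass through a single vertex $v$), then bootstrap to the exact bound by a wedge-counting argument. For the stability half, the machinery already in the paper essentially delivers what we need: Theorem \ref{asymptotic}, Remark \ref{alpha}, and Theorem \ref{stability} hold for every $r\ge 3$ (not just $r\in\{3,4\}$) once $k\ge 2^{\lceil\log r\rceil}+3$, so for $k$ large in terms of $r$ we already know that if $|H|\ge\binom{n-1}{k-1}-n^{k-1-c(r)}$ then some vertex $v$ lies in all but a $\beta$-fraction of the edges, with $\beta=k^{4k}n^{-c'(r)}$. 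Thus the real content is the exact step: showing $|H^*|=0$ where $H^*=\{e\in H:v\notin e\}$ and $\tilde H=\{f\in\binom{V(H)}{k}:v\in f,\,f\notin H\}$, given $|H^*|=|\tilde H|\le\beta\binom{n-1}{k-1}$.

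For the exact step I would retain the wedge framework but replace Observation \ref{obs2} by its general-$r$ version (which is exactly how it is stated in the excerpt: for any $r$-equipartition of $[k]$ at least one block lies in $\mathcal B$ forces $|\mathcal B|\ge\frac1r\binom{k}{k'}$, where $k=rk'$). So Observation \ref{obs} and Claim \ref{lambda lower bound} go through verbatim, giving $\Lambda(H)\ge\frac1r\binom{k}{k'}\binom{n-k-1}{k'-1}|H^*|$. The upper bound on $\Lambda(H)$ is where the work lies: one again calls a $3$-set $T$ (disjoint from $v$) \emph{bad} if $d_{\tilde H}(T\cup\{v\})$ is large, bounds the number of bad sets by $|\tilde H|$ via double counting, and then, for each $(k-k')$-subset $D$ of a fixed $f\in\tilde H$, splits the link $H^*_D$ into $k'$-sets containing many disjoint bad $3$-sets (negligibly few, by the sunflower-type bound on $|W|$) versus those containing a good $3$-set. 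For the latter one uses Theorem \ref{generalized C4} to bound, for each $(k'-3)$-set $A$, the number of good $T$ with $A\cup T\in H^*_D$: if there were four such forming a $C_4$-configuration $T_1\cup T_2=T_3\cup T_4$, then because all four $T_i$ are good one can find a common $(k-4)$-extension $S$ with $T_i\cup S\cup\{v\}\in H$, and then $\{T_i\cup S\cup\{v\}\}_{i}$ together with $\{D\cup A\cup T_i\}_i$ contains an $r$-regular subgraph. The point that makes this work for general $r$: from four edges on the ground set $\{v\}\cup S\cup T'$ together with four edges on $D\cup A\cup T'$ we get $4\ge r$ is \emph{not} what we need; rather we need that among these eight $k$-sets some sub-collection covers a nonempty set exactly $r$ times. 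I would instead design the configuration so that the relevant overlap structure yields $r$ edge-disjoint perfect matchings of a small ground set, exactly as in the proof of Claim \ref{P_1} via \eqref{H(k,l) regular subgraph}; concretely, one should count \emph{richer} wedge-like gadgets (copies of $H'(k',\ell')$ in the links for suitable $\ell'$ with $2^{\ell'}\ge r$) rather than bare $C_4$'s, so that a single such gadget already produces an $r$-regular subgraph.

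Combining the two bounds gives, after cancelling $\binom{n-k-1}{k'-1}$ and letting $n\to\infty$, an inequality of the shape $\frac1r\binom{k}{k'}\le C\binom{k-1}{k'-1}\binom{k'-c_r}{3}^{-1}(k'-1)(k'-2)$ for an absolute constant $C$ close to $6$; using $\frac1r=\frac{k'}{k}$ this simplifies to $\dfrac{(k'-c_r)(k'-c_r-1)(k'-c_r-2)}{(k'-1)(k'-2)}\le 6C$, which fails once $k'$ (hence $k$) is large enough in terms of $r$ — giving the contradiction and hence $|H^*|=0$. Finally Observation \ref{obs} (in its general-$r$ form) shows a full $k$-star plus any extra edge has an $r$-regular subgraph, pinning down the extremal configuration.

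\textbf{Main obstacle.} The hard part will be the $C_4$-to-$r$-regular step: for $r\in\{3,4\}$ four edges meeting in the right pattern already contain an $r$-regular subgraph, but for large $r$ one needs a gadget guaranteeing $r$ edge-disjoint matchings, so the definition of ``bad $3$-set'' and the application of Theorem \ref{generalized C4} must be upgraded to a higher-uniformity $H'(\cdot,\cdot)$-avoidance statement (via Proposition \ref{ext(H(k,l))}), and one must recheck that the resulting polynomial-in-$k'$ loss is still dominated by the $\frac1r\binom{k}{k'}$ on the left. A secondary technical point is ensuring $k_r$ is chosen large enough that simultaneously $k\ge 2^{\lceil\log r\rceil}+3$ (for stability), $k'\ge$ the threshold making the final rational inequality fail, and $\beta$ is small enough for all the error terms; but this is bookkeeping rather than a genuine difficulty.
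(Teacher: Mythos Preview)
This statement is Conjecture~\ref{r>4}, which the paper leaves \emph{open}: immediately after stating it the author writes that ``the proof of Theorem~\ref{main theorem} does not extend for the case $r>4$ because the author does not know how to generalize Theorem~\ref{generalized C4} for more pairs of disjoint edges,'' and then proposes Conjecture~\ref{r disjoint pairs} as a hypothesis that would suffice. So there is no proof in the paper to compare your proposal against; the relevant question is whether your sketch closes the gap the author could not.

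It does not, and the reason is quantitative rather than structural. Your plan is to replace the $C_4$-type input (Theorem~\ref{generalized C4}) by the Tur\'an bound for $H'(s,\ell')$ from Proposition~\ref{ext(H(k,l))}, with $\ell'$ large enough that $2^{\ell'+1}\ge r$. But Proposition~\ref{ext(H(k,l))} only guarantees a copy of $H'(s,\ell')$ once the $s$-uniform hypergraph has $2n^{s-2^{-\ell'}}$ edges, i.e.\ it saves a factor of $n^{2^{-\ell'}}$, not a full factor of $n$. Tracing this through the argument of Claim~\ref{lambda upper bound}, the bound on $|H^*_D|$ becomes of order $n^{\,k'-2^{-\ell'}}$ rather than $n^{\,k'-1}$, so the resulting upper bound on $\Lambda(H)$ is of order $|\tilde H|\cdot(\text{poly}(k))\cdot n^{\,k'-2^{-\ell'}}$. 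Since the lower bound from Claim~\ref{lambda lower bound} is of order $|H^*|\cdot(\text{poly}(k))\cdot n^{\,k'-1}$ and $|H^*|=|\tilde H|$, the comparison reads $n^{\,k'-1}\lesssim n^{\,k'-2^{-\ell'}}$, which is \emph{satisfied} for large $n$ (because $2^{-\ell'}<1$ whenever $\ell'\ge 1$), and no contradiction ensues. In other words, the loss you describe as ``polynomial-in-$k'$'' is in fact polynomial in $n$, and that is fatal to the wedge comparison.

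This is exactly the obstruction the paper isolates: what is needed is a bound of the shape $g(r)\binom{n-1}{k-1}$ (equivalently, $O_{k,r}(n^{s-1})$ in the link) for the appearance of $r$ pairwise edge-disjoint perfect matchings on a common $2s$-set---precisely the content of Conjecture~\ref{r disjoint pairs}. Unless you can supply such a bound (or an alternative gadget whose Tur\'an threshold saves a full power of $n$), the exact step does not go through for $r\ge 5$.
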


The proof of Theorem \ref{main theorem} does not extend for the case $r>4$ because the author does not know how to generalize Theorem \ref{generalized C4} for more pairs of disjoint edges. However, if the following conjecture is true, then we can prove Conjecture \ref{r>4}.

\begin{conjecture}\label{r disjoint pairs}
For every positive integer $r$, there exist $k_r$, $n_k$ and $g(r)$ which satisfy the following.
 For $k\geq k_r, n\geq n_k$, any $n$-vertex $k$-uniform hypergraph $H$ with more than
$$ g(r){{n-1}\choose{k-1}}$$ edges contains distinct edges $A_1,B_1,\cdots, A_r, B_r$ so that $A_i\cap B_i =\emptyset$ for all $i=1,2\cdots, r$ and $A_1\cup B_1 =A_2\cup B_2 =\cdots =A_r\cup B_r.$
\end{conjecture}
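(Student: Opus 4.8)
The natural plan is to imitate the strategy behind Theorem \ref{generalized C4}, reducing the uniformity to a bounded value; I do not expect to close the conjecture, but here is the route I would attempt. The observation that makes a recursion conceivable is a \emph{lifting step}: if $x\neq y$ are vertices of $H$ and the $(k-1)$-uniform hypergraph $H_x\cap H_y$ (the common link, on $V(H)\setminus\{x,y\}$) contains the analogous configuration in uniformity $k-1$, say edges $A_1',B_1',\dots,A_r',B_r'$ with $A_i'\cap B_i'=\emptyset$ and $A_1'\cup B_1'=\cdots=A_r'\cup B_r'=S'$, then the $k$-sets $A_i:=A_i'\cup\{x\}$ and $B_i:=B_i'\cup\{y\}$ are edges of $H$ (because $A_i'\in H_x$ and $B_i'\in H_y$), are pairwise distinct, satisfy $A_i\cap B_i=\emptyset$, and share the common union $S'\cup\{x,y\}$, a $2k$-set. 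So it would be enough to (i) show that whenever $|H|$ is large there is a pair $x,y$ with $|H_x\cap H_y|$ bounded below by the appropriate multiple of $\binom{n-3}{k-2}$, so that one can iterate, and (ii) settle a base case in some fixed uniformity $k_0$ slightly above $\log_2(2r)$, namely that a $k_0$-graph on $N$ vertices with more than $g(r)\binom{N-1}{k_0-1}$ edges contains the configuration.

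A second, essentially equivalent, route goes through supersaturation together with Theorem \ref{generalized C4}. For a $2k$-set $S$ write $p(S)$ for the number of pairs $\{e,S\setminus e\}$ with both $e$ and $S\setminus e$ edges of $H$; we want $p(S)\ge r$ for some $S$. The number of copies in $H$ of the configuration with $r-1$ pairs equals $\sum_S\binom{p(S)}{r-1}$; if $p(S)\le r-1$ for every $S$ this equals the number of $2k$-sets with $p(S)=r-1$, hence is at most $\sum_S p(S)$, which is the number of disjoint pairs of edges of $H$ and so at most $\binom{|H|}{2}$. Thus it would be enough to prove, by induction on $r$ with base case $r=2$ furnished by Theorem \ref{generalized C4}, that $|H|\ge g(r)\binom{n-1}{k-1}$ forces strictly more than $\binom{|H|}{2}$ copies of the configuration with $r-1$ pairs.

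The step I expect to be the main obstacle is exactly the quantitative control underlying both routes, which is the difficulty flagged after the conjecture. One would like the $\Theta(n^{k-1})$ edges of $H$ to force $\Theta(n^{2k-2})$ disjoint pairs that are \emph{concentrated} on a small family of $2k$-sets, but there are $\Theta(n^{2k-1})$ $2k$-sets through a given vertex, so crude pigeonholing over $2k$-sets is hopeless, and a naive link-graph recursion (via Corollary \ref{link graph}, or via an averaging bound for $|H_x\cap H_y|$) loses a factor of order $k/n$ in the density constant at each step, which would force $g(r)$ to grow with $k$. For $r=2$ this difficulty is overcome in Theorem \ref{generalized C4} through a delicate analysis of the four blocks $A\cap C,\ A\cap D,\ B\cap C,\ B\cap D$ of the two pairs, and the trouble is that this analysis does not localize $r$ pairs simultaneously. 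The concrete variant I would pursue is to fix a constant $t=t_r$ with $\binom{2t}{t}\ge 3r$, set $\ell=k-t$, and try to find disjoint $\ell$-sets $P,Q$ and a $2t$-set $Z$ disjoint from $P\cup Q$ for which each of the $t$-uniform links $H_P$ and $H_Q$ contains all but at most $r$ of the $t$-subsets of $Z$: then at least $r$ of those $t$-subsets $R$ satisfy $R\in H_P$ and $Z\setminus R\in H_Q$, and the pairs $\{P\cup R,\ Q\cup(Z\setminus R)\}$ form the desired configuration. Reducing the problem to producing one such pair of \emph{almost complete, localized links} — deciding which $\ell$-sets carry heavy links and forcing two of them to overlap on a common $2t$-set — is, to me, the most promising but still incomplete line of attack.
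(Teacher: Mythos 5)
This statement is posed in the paper as an open conjecture: the paper gives no proof, records only that it holds for $r=1$ (Erd\H{o}s--Ko--Rado) and $r=2$ (F\"uredi, and Theorem \ref{generalized C4} of Pikhurko--Verstra\"ete with $g(2)\le 7/4$), and explicitly identifies the inability to extend Theorem \ref{generalized C4} to more than two pairs as the obstruction to pushing Theorem \ref{main theorem} beyond $r\le 4$. So there is no proof of the paper's to compare yours against, and your proposal --- as you say yourself --- does not close the conjecture. Your partial reductions are correct as far as they go: the lifting step from $H_x\cap H_y$ is valid (the lifted edges are distinct, pairwise disjoint within each pair, and share the $2k$-set $S'\cup\{x,y\}$), and the identity that the number of copies of the $(r-1)$-pair configuration equals $\sum_S\binom{p(S)}{r-1}$, together with the deduction that exceeding $\binom{|H|}{2}$ copies forces some $S$ with $p(S)\ge r$, is sound.

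The genuine gap is the quantitative input that both routes need, and it is worth seeing why it is not a routine supersaturation statement. Every copy of the $(r-1)$-pair configuration is assembled from disjoint pairs of edges, so the count satisfies $\sum_S\binom{p(S)}{r-1}\le \bigl(\tfrac12\binom{2k}{k}\bigr)^{r-2}\sum_S p(S)\le \bigl(\tfrac12\binom{2k}{k}\bigr)^{r-2}\binom{|H|}{2}=O_{k,r}(n^{2k-2})$; hence asking for strictly more than $\binom{|H|}{2}$ copies is asking for the count to be within a bounded factor of its absolute maximum, i.e.\ for the $\Theta(n^{2k-2})$ disjoint pairs to concentrate on $O(n^{2k-2})$ of the $\Theta(n^{2k})$ candidate $2k$-sets. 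That concentration \emph{is} the conjecture, not a lemma on the way to it. Similarly, the link recursion via Corollary \ref{link graph} takes density $g\binom{n-1}{k-1}$ to roughly $\frac{g^2k}{n}\binom{n-3}{k-2}$, a loss of a factor of order $n/k$ per step; for fixed $g$ this degenerates as $n\to\infty$, so it would force $g$ to grow with $n$ (not merely with $k$), destroying the form of the statement --- this is precisely why Proposition \ref{ext(H(k,l))} must operate at the polynomially larger threshold $2n^{k-2^{-\ell}}$ rather than at $\Theta(\binom{n-1}{k-1})$. Your third scheme (disjoint $\ell$-sets $P,Q$ whose $t$-uniform links are almost complete on a common $2t$-set $Z$) would indeed yield the configuration if such $P,Q,Z$ were found, but producing them is the same concentration problem in disguise. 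In short: the reductions you verify are correct, the obstacle you flag is the right one, and the statement remains open.
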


Note that this conjecture is known to be true for $r=1,2$. For $r=1$, it's Erd\H{o}s-Ko-Rado Theorem. For $r=2$  F\"{u}redi \cite{Furedi} proved $k_2=3, g(2)\leq \frac{7}{2}$ and later Pikhurko and Verstra\"{e}te \cite{PV} improved it to $g(2)\leq \frac{7}{4}$.

In Theorem \ref{main theorem}, we assume that $k$ is much bigger than $r$ and $r\mid k$. What happens if the conditions do not hold?
First, let's see what happens if $k$ is not big enough in terms of $r$. The author believes that full $k$-star might be the only extremal example even when $k\geq 2r$ and $r\mid k$. However if $r=k$ then the extremal example is no longer only full $k$-star. Also, if $r>k$, then $|H|$ can be bigger than ${{n-1}\choose {k-1}}$. It is straightforward to check the following example.

\begin{example} \label{example A} Take an $n$-vertex full $k$-star $H$. We take a non-edge $e$ of $H$, and an edge $e'$ of $H$ such that $|e\cap e'|=k-1$. Then $(H \cup \{e\})\setminus\{e'\}$ does not have $r$-regular subgraphs if $r=k$, and $H\cup \{e\}$ does not have $r$-regular subgraphs if $r=k+1$.
\end{example}

As an example, if $r$ is bigger than $k$, even $r=k+1$ does not imply $|H|\leq {{n-1}\choose {k-1}}$ any more. However, as we can see in Section \ref{section asymptotic}, $|H|\leq (1+o(1)){{n-1}\choose{k-1}}$ still holds if $k \geq 2^{\lceil \log{r} \rceil-1} +2$. Thus, for $r = 2^l$ and $k\geq \frac{r}{2}+3 = 2^{l-1}+2$, the asymptotics of the number of edges in hypergraphs with no $r$-regular subgraphs is still $(1+o(1)){{n-1}\choose {k-1}}$ even though $r\geq k$. However, the following example shows that this becomes false if $r$ is much bigger.

\begin{example} \label{example B}
For an integer $c>1$, take an $n$-vertex $k$-uniform hypergraph $H$ such that $E(H)=\{e: e\in {{V(H)}\choose {k}}, |e\cap \{x_1,x_2,\cdots,x_c\}|=1 \}$. Then $|H|= c{{n-c}\choose {k-1}} \sim c{{n-1}\choose {k-1}}$. However, $H$ does not contain any $r$-regular subgraph for $r> c{{c(k-1)}\choose {k-2}} $.
\end{example}
\begin{proof}
Suppose $H$ contains an $r$-regular subgraph $R$, then $R$ must cover some vertices in $\{x_1,x_2,\cdots, x_c\}$. Assume it covers $\{x_1,x_2,\cdots, x_{c'}\}$. Since it must cover those vertices exactly $r$-times, $|R| = c'r$.
Then $V(R) = \frac{k|R|}{r} = c'k$. Then a vertex $x$ in $V(R)$ can be covered only by edges $e$ with $|e\cap (V(R)\setminus \{x_1,\cdots, x_{c}\}|=k-1$. So, degree of $x$ is at most $c' {{c'(k-1)}\choose {k-2}} <r$, a contradiction.\end{proof}

Hence, it is natural to ask the following question. 
Note that, such $r(k)$ must exist and $k\leq r(k) \leq 2{{2k-2}\choose {k-2}}+1$ by Theorem \ref{asymptotic} and Example \ref{example B}.

\begin{question} \label{question}
What is the minimum $r=r(k)$ such that 
$$\limsup_{n\rightarrow \infty} \frac{\max|H|}{{{n-1}\choose {k-1}}} >1$$
where the maximum is taken over all $n$-vertex $k$-uniform hypergraphs with no $r$-regular subraphs.
\end{question}

Now we consider the case where $r$ does not divide $k$ while $k$ is bigger than $r$. In \cite{MV2009}, Mubayi and Verstra\"{e}te conjectured the following.

\begin{conjecture}\cite{MV2009}\label{MV conjecture}
For every integer $k$ with $2\nmid k$, there exists an integer $n_k$ such that for $n\geq n_k$, if $H$ is an $n$-vertex $k$-uniform hypergraph with no $2$-regular subgraphs then $|H|\leq {{n-1}\choose {k-1}} + \lfloor \frac{n-1}{k} \rfloor$. Equality holds if and only if $H$ is a full $k$-star together with a maximal matching disjoint from the full $k$-star.
\end{conjecture}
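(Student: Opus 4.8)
The plan is to marry the $r=2$ stability phenomenon with a local analysis of how a $2$-regular subgraph can sit inside a near-star, along the lines of the proof of Theorem~\ref{main theorem}, the essential new difficulty being the loss of the divisibility $r\mid k$. First I would check that the proposed extremal hypergraph is $2$-regular-subgraph-free. Let $H$ be the full $k$-star at a vertex $v$ together with a matching $M$ of size $\lfloor\frac{n-1}{k}\rfloor$ on $V(H)\setminus\{v\}$. In any $2$-regular subgraph $v$ is covered $0$ or $2$ times; if $0$ times the subgraph lies inside $M$, impossible. If $2$ times it is $\{e_1,e_2\}\cup M'$ with $e_i=\{v\}\cup f_i$ distinct star edges and $M'\subseteq M$; as the edges of $M'$ are pairwise disjoint, each lies inside $f_1\triangle f_2$, so $M'\neq\emptyset$ partitions $f_1\triangle f_2$ into $k$-sets, forcing $k\mid |f_1\triangle f_2|=2(k-|f_1\cap f_2|)$ with $1\le|f_1\triangle f_2|\le 2(k-1)$; hence $|f_1\triangle f_2|=k$ and $|f_1\cap f_2|=(k-2)/2\notin\mathbb Z$ for odd $k$ -- a contradiction. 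So $H$ has no $2$-regular subgraph and $|H|=\binom{n-1}{k-1}+\lfloor\frac{n-1}{k}\rfloor$.

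For the upper bound I would suppose that $H$ has no $2$-regular subgraph and $|H|>\binom{n-1}{k-1}+\lfloor\frac{n-1}{k}\rfloor$, aiming for a contradiction (the equality case is read off afterwards). By the stability result of Mubayi and Verstra\"ete~\cite{MV2009} (the $r=2$ counterpart of Theorem~\ref{stability}) there is a vertex $v$ with $|H^*|=o\bigl(\binom{n-1}{k-1}\bigr)$, where $H^*=\{e\in E(H):v\notin e\}$; putting $\tilde H=\{f\in\binom{V(H)}{k}:v\in f,\ f\notin E(H)\}$ one has $|H^*|-|\tilde H|=|H|-\binom{n-1}{k-1}>\lfloor\frac{n-1}{k}\rfloor$, so after accounting for the missing star edges $\tilde H$, the set $H^*$ is strictly bigger than a maximal matching. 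The goal becomes to show that this is impossible, and that equality forces $\tilde H=\emptyset$ and $H^*$ a maximal matching.

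The heart is a local analysis extending Observation~\ref{obs}. Since $v\notin e$ for every $e\in H^*$, a $2$-regular subgraph meeting both $\{v\}$ and $H^*$ uses exactly two link edges $\{v\}\cup f_1,\{v\}\cup f_2$; unwinding the covering conditions -- and using oddness of $k$ to kill the three-edge configuration $\{\{v\}\cup f_1,\{v\}\cup f_2,e\}$ (which would need $|f_1\cap f_2|=(k-2)/2$) -- the shortest such subgraph is $\{\{v\}\cup Y\cup A_1,\{v\}\cup Y\cup A_2,X\cup A_1,X\cup A_2\}$ where $X\cup A_1,X\cup A_2\in H^*$ meet exactly in $X$ (so $A_i$ has size $k-|X|$) and $Y$ is an arbitrary $(|X|-1)$-set disjoint from $X\cup A_1\cup A_2\cup\{v\}$. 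When $|X|\ge2$ there are $\Theta(n^{|X|-1})$ admissible $Y$, while -- after quarantining the few ``bad'' small sets on which $\tilde H$ has large degree, exactly as with the bad $3$-sets in Section~\ref{exact result} -- only $o(n^{|X|-1})$ of them make one of the two link edges a non-edge, so a $2$-regular subgraph appears; hence any two edges of $H^*$ meet in at most one vertex. Finally a wedge-type double count -- bounding intersecting pairs of $H^*$-edges from below via $|H^*|-|\tilde H|>\lfloor\frac{n-1}{k}\rfloor$ and from above via the non-edges each such pair is forced to create, with Theorem~\ref{generalized C4} controlling multiplicities as in Claims~\ref{lambda lower bound} and~\ref{lambda upper bound} -- would drive $\tilde H$ to $\emptyset$ and $H^*$ to a matching of size exactly $\lfloor\frac{n-1}{k}\rfloor$, identifying $H$ as the claimed extremal hypergraph.

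The main obstacle is precisely that $r\nmid k$. Because an edge of $H^*$ cannot be split into two equal halves when $k$ is odd, the one-step argument of Observation~\ref{obs} has no direct analogue and one is forced into the two-link-edge configurations above; worse, the borderline case $|X|=1$ (two $H^*$-edges sharing a single vertex) leaves no averaging room over $Y$, so ruling it out seems to require knowing in advance that $\tilde H$ is essentially empty -- which in turn seems to require controlling $H^*$. Breaking this circularity, together with the fact that the additive term $\lfloor\frac{n-1}{k}\rfloor$ prevents one from simply forcing $|H^*|=0$ as in Theorem~\ref{main theorem}, is what keeps the conjecture open. A plausible route is to first establish the softer statement that both $\tilde H$ and $H^*$ minus a matching have size $o(n^{k-2})$, and then bootstrap these errors down to zero.
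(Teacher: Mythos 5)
The statement you were asked about is not a theorem of this paper: it is Conjecture~\ref{MV conjecture}, quoted from Mubayi and Verstra\"ete \cite{MV2009}, and the paper offers no proof of it (nor does the literature, as far as the paper reports). So there is nothing to compare your argument against, and the only question is whether your proposal actually closes the problem. It does not, and you say so yourself. Your verification that the candidate extremal hypergraph has no $2$-regular subgraph is essentially correct and worth keeping (one slip: since the $f_i$ are $(k-1)$-sets, $|f_1\triangle f_2|=2(k-1-|f_1\cap f_2|)$, not $2(k-|f_1\cap f_2|)$; with the corrected formula the forced value $|f_1\triangle f_2|=k$ does give $|f_1\cap f_2|=(k-2)/2\notin\mathbb{Z}$, as you state). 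The stability step is also plausibly available from \cite{MV2009}, at least for $k\geq 5$.

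The genuine gap is exactly where you locate it, and it is not a technicality one can wave past. Your four-edge configuration $\{\{v\}\cup Y\cup A_1,\ \{v\}\cup Y\cup A_2,\ X\cup A_1,\ X\cup A_2\}$ only yields an averaging argument when $|X|\geq 2$, because then $Y$ ranges over $\Theta(n^{|X|-1})$ sets and one can afford to lose the few $Y$ that hit $\tilde H$. When two edges of $H^*$ share a single vertex, $Y=\emptyset$ is forced, both link edges are determined, and a single membership of $\tilde H$ kills the configuration; ruling this out needs control of $\tilde H$, which in your scheme is obtained only after $H^*$ is understood. This circularity, plus the fact that the target bound $\binom{n-1}{k-1}+\lfloor\frac{n-1}{k}\rfloor$ leaves no slack to force $|H^*|=0$ outright (unlike in Theorem~\ref{main theorem}, where Observation~\ref{obs} exploits $r\mid k$ to split a single edge of $H^*$), is precisely why the conjecture remains open. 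Your closing suggestion --- first prove $|\tilde H|+|H^*\setminus M|=o(n^{k-2})$ for some matching $M$ and then bootstrap --- is a sensible research direction, but as written the proposal is a programme, not a proof, and should not be presented as resolving the conjecture.
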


In the same spirit, we may add more edges to full $k$-star when $r\geq 3$, $r<k$, $r\nmid k$. In order to construct an example, we need the following concept.

In 1973, Brown, Erd\H{o}s and S\'{o}s \cite{BES} proposed a study for a new parameter, $f_{k}(n,a,b)$, the largest number of edges in a $k$-uniform hypergraph on $n$ vertices that contains no $b$ edges spanned by $a$ vertices. Determining $f_k(n,a,b)$ for general tuple $(k,a,b)$ is very difficult. Note that finding value of $f_3(n,6,3)$ is known as the famous $(6,3)$-problem. In \cite{BES}, they showed the following.

\begin{theorem}\cite{BES}
If $a>k$ and $b>1$, then
$f_k(n,a,b) > c_{a,b} n^{\frac{kb-a}{b-1}}$.
\end{theorem}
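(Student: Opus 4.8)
The statement to prove is the Brown--Erd\H{o}s--S\'{o}s lower bound $f_k(n,a,b) > c_{a,b} n^{(kb-a)/(b-1)}$ for $a>k$, $b>1$. The plan is to use the probabilistic deletion method, which is the standard route for such lower bounds.

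First I would take a random $k$-uniform hypergraph on $[n]$ by including each of the $\binom{n}{k}$ possible edges independently with probability $p$, to be optimized later. Let $X$ be the number of edges and let $Y$ be the number of ``bad configurations'', that is, sets of $b$ edges spanned by at most $a$ vertices. Then $\ex[X] = p\binom{n}{k} = \Theta(p n^k)$. For the expectation of $Y$, I would fix an $a$-set $S$ and count the number of ways to choose $b$ edges inside $S$; since each such choice appears in the random hypergraph with probability $p^b$, and there are $O(n^a)$ choices of $S$, each containing a bounded number $O(1)$ of $b$-subsets of its $\binom{a}{k}$ internal edges, we get $\ex[Y] = O(p^b n^a)$. (One should be mildly careful that a bad configuration of $b$ edges might span strictly fewer than $a$ vertices, but this only helps: the count is dominated by configurations spanning exactly $a$ vertices, or one can just bound by summing over all $a$-sets.)

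Next I would delete one edge from each bad configuration to obtain a hypergraph $H$ with no $b$ edges spanned by $a$ vertices; then $|H| \ge X - Y$ and so $\ex[|H|] \ge \ex[X] - \ex[Y] \ge c_1 p n^k - c_2 p^b n^a$. Now I would choose $p$ to balance the two terms, up to a constant factor: setting $p = \tfrac{1}{2}(c_1/c_2)^{1/(b-1)} n^{(k-a)/(b-1)}$ makes $c_2 p^b n^a \le \tfrac{1}{2} c_1 p n^k$, so $\ex[|H|] \ge \tfrac{1}{2} c_1 p n^k = c_{a,b} n^{k + (k-a)/(b-1)} = c_{a,b} n^{(kb-a)/(b-1)}$. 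Since the expectation is at least this, there is a particular outcome achieving it, giving $f_k(n,a,b) \ge c_{a,b} n^{(kb-a)/(b-1)}$, and a trivial adjustment of the constant turns $\ge$ into $>$. One needs $p \le 1$, which holds for $n$ large since $k < a$ forces the exponent $(k-a)/(b-1)$ to be negative; for small $n$ the bound is vacuous after shrinking the constant.

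The main thing to get right is the counting of $\ex[Y]$: one must verify that for fixed $a,b,k$ the number of isomorphism types of ``$b$ edges on at most $a$ vertices'' is a constant depending only on $a,b,k$, so that $\ex[Y] = O_{a,b,k}(p^b n^a)$ with the implied constant independent of $n$ and $p$. This is the only place where the hypotheses $a > k$ and $b > 1$ are used in an essential way ($b>1$ is needed so that $p^b$ genuinely beats $p$ after optimization, and $a>k$ guarantees $p\to 0$). Everything else is the routine deletion-method bookkeeping, so I would not belabor it.
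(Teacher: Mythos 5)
Your deletion-method argument is correct: the expected-edge count $\Theta(pn^k)$, the bound $\ex[Y]=O(p^b n^a)$ obtained by summing over $O(n^a)$ vertex sets each containing $O_{a,b,k}(1)$ choices of $b$ internal edges, the choice $p=\Theta(n^{(k-a)/(b-1)})$ (legitimate since $a>k$), and the resulting exponent $(kb-a)/(b-1)$ all check out. The paper itself gives no proof of this statement --- it is quoted directly from Brown, Erd\H{o}s and S\'{o}s --- and your argument is essentially the standard (and original) probabilistic deletion proof of that lower bound, so there is nothing to reconcile.
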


Now we consider the following construction.

\begin{construction}\label{const}
Let $k=k'd,r=r'd$ be positive integers with $k\geq 3, r'\geq 3$ such that $k'$ and $r'$ are relatively prime. Consider a $(k-1)$-uniform $(n-2)$-vertex hypergraph $H'$ with $f_{k-1}(n-2,2k-2,r')$ edges such that $H'$ does not contain any $r'$ edges spanning at most $2k-2$ vertices. Especially, $|H'|$ contains at least $c_{k,r'} n^{\frac{r'-2}{r'-1}(k-1)}$ edges.

Now we consider two vertices $x,y$ disjoint from $V(H')$ and the hypergraph $H_{k,r}$ with 
$$ V(H_{k,r}) = V(H')\cup \{x,y\},$$
$$ E(H_{k,r}) = \{e\in {{V(H)}\choose k}: x\in e\} \cup \{ e\cup \{y\} : e\in E(H')\}.$$

Then $H_{k,r}$ contains at least ${{n-1}\choose {k-1}} + c_{k,r} n^{\frac{r'-2}{r'-1}(k-1)} $ edges, and $H_{k,r}$ contains no $r$-regular subgraphs.
\end{construction}
\begin{proof}
Assume that $H_{k,r}$ contains an $r$-regular subgraph $R$. Let $H^x$ be the full $k$-star in $H_{k,r}$ and $H^*$ be the hypergraph consisting edges not containing $x$. Since both $H^x$ and $H^*$ are subgraphs of two distinct full $k$-star, each of them does not contain any $r$-regular subgraph.
Thus $R$ must intersect both $H^x$ and $H^*$, thus $R$ must cover both $x$ and $y$. Since $R$ covers $x$ exactly $r$ times, $|R\cap H^x|=r$ and we have
 $$|R| = |R\cap H^x| +|R\cap H^*| = r+ |R\cap H^*| \geq r+1.$$ However, because $R$ induces an $r$-regular subgraph, $$r |V(R)|=k|R|= kr+k|R\cap H^*|.$$ Since $k',r'$ are relatively prime, $|R\cap H^*|$ must be a multiple of $r'$. Moreover $|R\cap H^*|\leq r$ because $y$ are not to be covered more than $r$-times. Hence $|V(R)| = \frac{k|R|}{r} \leq 2k$. Now we consider $\{ e-y : e\in R\cap H^*\}$. It is a set of at least $r'$ edges of $H'$ covering at most $2k-2$ vertices, a subset of $V(R)-\{x,y\}$. It is a contradiction to the definition of $H'$. Thus $H_{k,r}$ does not contain any $r$-regular subgraph. \end{proof}

Hence, there is an $n$-vertex $k$-uniform hypergraph $H$ with no $r$-regular subgraphs which contains quite more edges than ${{n-1}\choose {k-1}}$ if $r$ does not divide $k$. Hence we propose the following question.

\begin{question}
Determine the least value of $h(k,r)$ such that there exists a constant $c_{k,r}$ so that every $n$-vertex $k$-uniform hypergraph $H$ with no $r$-regular subgraphs satisfies 
 $$|H|\leq {{n-1}\choose {k-1}} + c_{k,r} n^{h(k,r)}.$$ 
\end{question}

The author suspects that $h(k,r)$ is related to the value of $gcd(k,r)$ based on the fact that the value we get from Construction \ref{const} is related to $k,r$, and $gcd(k,r)$.

Also, considering linear hypergraphs is another direction of studying regular subgraphs. The following question was proposed in \cite{DHLNPRSV}.

\begin{question}\cite{DHLNPRSV}\label{question DH}
For an integer $r$, let $f_{k,r}(n)$ be the maximum number of edges in a linear $n$-vertex $k$-uniform hypergraphs with no $r$-regular subgraphs. Is $f_{3,3}(n) = o(n^2)$?
\end{question}

Especially, authors of \cite{DHLNPRSV} asked if sufficiently large Steiner triple system contains a $3$-regular subgraph. In \cite{V}, Verstra\"{e}te observed that Lemma \ref{lemma 1} together with the fact that all linear $k$-uniform hypergraphs have maximum degree at most $\frac{n-1}{k-1}$ trivially imply the following.

\begin{corollary} \label{linear}
For any integers $k,r\geq 3$ and sufficiently large $n$, $$f_{k,r}(n) < 6n^2(\log\log n)^{-\frac{1}{2(k-1)}}.$$
\end{corollary}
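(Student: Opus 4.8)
\textbf{Proof proposal for Corollary~\ref{linear}.} The plan is to apply Lemma~\ref{lemma 1} directly, using the defining feature of linear hypergraphs to control the maximum degree. Recall that in a linear $k$-uniform hypergraph any two edges meet in at most one vertex, so the edges through a fixed vertex $x$ are pairwise disjoint outside $x$; hence each such edge uses $k-1$ of the remaining $n-1$ vertices and these $(k-1)$-sets are disjoint. This gives immediately $d_H(x)(k-1)\leq n-1$, i.e. $\Delta=\Delta(H)\leq \frac{n-1}{k-1}$.

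First I would dispose of the trivial regime: if $|H| < c(r)\Delta k$, then since $\Delta \le \frac{n-1}{k-1} \le n$ we get $|H| \le c(r) k n$, which is certainly smaller than $6n^2(\log\log n)^{-1/(2(k-1))}$ once $n$ is large (here $c(r)$ and $k$ are fixed while $n\to\infty$). So we may assume $|H|\geq c(r)\Delta k$ and invoke Lemma~\ref{lemma 1}. It yields
\[
|H|\leq \frac{6 n^{k/(k-1)}\Delta^{(k-2)/(k-1)}}{\bigl(\log\log\frac{|H|}{k\Delta}\bigr)^{1/(2(k-1))}}.
\]
Now substitute $\Delta\leq \frac{n-1}{k-1}\leq n$: the numerator becomes at most $6 n^{k/(k-1)} n^{(k-2)/(k-1)} = 6 n^{(2k-2)/(k-1)} = 6n^2$. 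For the denominator I need a lower bound on $|H|/(k\Delta)$ that grows with $n$; since $\Delta\leq n$ and (say) $|H|\geq \binom{n-1}{k-1}/ (\text{something}) $ — more simply, in the nontrivial case $|H|\geq c(r)k\Delta$, so $|H|/(k\Delta)\geq c(r)\geq 3$ and $\log\log(|H|/(k\Delta))$ is at least a positive constant, which already gives $|H| = O(n^2)$; to land exactly the stated bound one checks that $|H|/(k\Delta) \ge \log n$ for large $n$ whenever $|H|$ is anywhere near the claimed magnitude, so that $\log\log\frac{|H|}{k\Delta}\ge \log\log\log n$ — but in fact the cleanest route is: either $|H| < 6n^2(\log\log n)^{-1/(2(k-1))}$ and we are done, or $|H|\ge 6n^2(\log\log n)^{-1/(2(k-1))}$, in which case $|H|/(k\Delta)\ge |H|/(kn) \ge \frac{6n}{k}(\log\log n)^{-1/(2(k-1))}\ge \log n$ for large $n$, so $\log\log\frac{|H|}{k\Delta}\ge \log\log\log n \ge 1$ eventually, and then Lemma~\ref{lemma 1} with $\Delta \le \tfrac{n-1}{k-1}$ gives $|H|\le 6n^2 (\log\log n)^{-1/(2(k-1))}$ after re-substituting $\Delta\le n$ in the numerator and noting $\log\log\frac{|H|}{k\Delta} \ge \log\log n$ in the denominator — wait, that last inequality is what actually needs $|H|/(k\Delta)\ge n$, which follows from $|H|\ge 6n^2(\log\log n)^{-1/(2(k-1))}$ and $\Delta\le \tfrac{n-1}{k-1}\le n/(k-1)$, so $|H|/(k\Delta)\ge \tfrac{6n(k-1)}{k}(\log\log n)^{-1/(2(k-1))}\ge n$ for $n$ large. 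This closes the argument.

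The only real subtlety — and the step I would be most careful about — is matching the precise constant and the precise $(\log\log n)$ exponent in the denominator: one must be sure that the quantity $\frac{|H|}{k\Delta}$ appearing inside the iterated logarithm in Lemma~\ref{lemma 1} is at least $n$ (not merely a constant) in the case that matters, so that $\log\log\frac{|H|}{k\Delta}\geq \log\log n$ and the stated exponent $-\frac{1}{2(k-1)}$ survives verbatim; this is exactly the bootstrap in the previous paragraph. Everything else is a direct substitution of $\Delta\le \frac{n-1}{k-1}$ into Lemma~\ref{lemma 1}, so there is no combinatorial obstacle — the corollary is genuinely a one-line consequence, as the excerpt attributes to Verstra\"{e}te.
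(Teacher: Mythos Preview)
Your approach is exactly the paper's: substitute the linear--hypergraph bound $\Delta\le\frac{n-1}{k-1}$ into Lemma~\ref{lemma 1}. However, your bootstrap contains a genuine slip. You assert that if $|H|\ge 6n^2(\log\log n)^{-1/(2(k-1))}$ then
\[
\frac{|H|}{k\Delta}\;\ge\;\frac{6(k-1)}{k}\,n\,(\log\log n)^{-1/(2(k-1))}\;\ge\;n\qquad\text{for }n\text{ large,}
\]
which would require $(\log\log n)^{1/(2(k-1))}\le 6(k-1)/k$; this is false since $\log\log n\to\infty$. So you do not get $\log\log\frac{|H|}{k\Delta}\ge\log\log n$ from that step, only $\log\log\frac{|H|}{k\Delta}\ge\log\log n - o(1)$, and together with your numerator bound $6n^2$ (from $\Delta\le n$) this yields $|H|\le (6+o(1))\,n^2(\log\log n)^{-1/(2(k-1))}$ rather than the stated strict inequality with constant~$6$.

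The fix is one line: do not throw away the factor $k-1$ in the numerator. With $\Delta\le\frac{n-1}{k-1}$ one has
\[
6\,n^{k/(k-1)}\Delta^{(k-2)/(k-1)}\;\le\;\frac{6\,n^2}{(k-1)^{(k-2)/(k-1)}},
\]
and for $k\ge 3$ the denominator $(k-1)^{(k-2)/(k-1)}\ge\sqrt{2}$. This constant slack absorbs the $(1+o(1))$ coming from $\log\log\frac{|H|}{k\Delta}=\log\log n-o(1)$, and the contradiction closes with the exact constant~$6$. Everything else in your write--up is fine.
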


Thus this answers Question \ref{question DH} and it implies that for an integer $r$, every $n$-vertex Steiner system contains an $r$-regular subgraph if $n$ is sufficiently large.

\section*{Acknowledgement}
The author is indebted to two anonymous referees, Alexandr V. Kostochka and Joonkyung Lee for very helpful comments and suggestions. The author especially thanks one referee for teaching him the current proof of Theorem \ref{main theorem} which is much better than the proof in the old version and yields better bound on $k$. The author is also grateful to Jacques Versatra{\"e}te for teaching him the implication from Lemma \ref{lemma 1} to Corollary \ref{linear}.

\medskip

{\footnotesize \obeylines \parindent=0pt

Jaehoon Kim
School of Mathematics
University of Birmingham
Edgbaston
Birmingham
B15 2TT
UK
}
\begin{flushleft}
{\it{E-mail addresses}:}
{\rm{j.kim.3@bham.ac.uk, mutualteon@gmail.com}}
\end{flushleft}

\end{document}